\newtheorem{theorem}{Theorem}[subsection]
\newtheorem{lemma}[theorem]{Lemma}
\newtheorem{corollary}[theorem]{Corollary}
\newtheorem{proposition}[theorem]{Proposition}
\theoremstyle{definition}
\newtheorem{remark}[theorem]{Remark}
\newtheorem{example}[theorem]{Example}
\newtheorem{assum}[theorem]{Assumption}
\newcommand{\tto}{\twoheadrightarrow}
\font\sc=rsfs10
\newcommand{\cC}{\sc\mbox{C}\hspace{1.0pt}}
\newcommand{\cJ}{\sc\mbox{J}\hspace{1.0pt}}
\newcommand{\cS}{\sc\mbox{S}\hspace{1.0pt}}
\newcommand{\cA}{\sc\mbox{A}\hspace{1.0pt}}
\newcommand{\cB}{\sc\mbox{B}\hspace{1.0pt}}
\font\scc=rsfs7
\newcommand{\ccC}{\scc\mbox{C}\hspace{1.0pt}}
\newcommand{\ccA}{\scc\mbox{A}\hspace{1.0pt}}
\newcommand{\ccS}{\scc\mbox{S}\hspace{1.0pt}}
\newcommand{\imorphism}{\spadesuit}
\begin{document}

\title[Endomorphisms of cell $2$-representations]{Endomorphisms of cell $2$-representations}
\author{Volodymyr Mazorchuk and Vanessa Miemietz}

\begin{abstract}
We determine the endomorphism categories of cell $2$-re\-pre\-sen\-tations of fiat $2$-categories
associated with strongly regular two-sided cells and classify, up to biequivalence, 
$\mathcal{J}$-simple fiat $2$-categories which have only one two-sided cell $\mathcal{J}$ 
apart from the identities.
\end{abstract}

\maketitle

\section{Introduction and description of the results}\label{s0}

Classically, Schur's Lemma asserts that the endomorphism algebra of a simple module (say for a 
finite dimensional algebra $A$ over some algebraically closed field $\Bbbk$) is isomorphic to $\Bbbk$.
It might happen that the algebra $A$ is obtained by decategorifying some $2$-category and that the simple
module in question is the decategorification of some $2$-representation of $A$. It is then natural
to ask whether the assertion of Schur's Lemma is the $1$-shadow of some $2$-analogue. Put differently,
this is a question about the endomorphism category of a $2$-representation of some $2$-category.

In \cite{MM} we defined a class of $2$-categories, which we call fiat $2$-categories, forming a natural $2$-analogue 
of finite dimensional cellular algebras. Examples of fiat $2$-categories appear (sometimes in disguise) in e.g.
\cite{BG,CR,FKS,KL,La,Ro}. Fiat $2$-categories have certain $2$-representations called {\em cell $2$-representations}, 
which were also defined in \cite{MM}. These $2$-representations satisfy some natural generalizations of the concept 
of simplicity for representations of finite dimensional algebras. The main objective of the present paper 
is to study the endomorphism categories of these cell $2$-representations with the ultimate goal to 
establish a $2$-analogue of Schur's Lemma.

We start the paper by extending the $2$-setup from \cite{MM} to accommodate non-strict $2$-natural transformations
between $2$-representations of fiat $2$-categories. This is done in Section~\ref{s1}, which also contains all
necessary preliminaries. The advantage of our new setup is the fact that $2$-natural transformations become closed
under isomorphism of functors and under taking inverses of equivalences (see Subsection~\ref{s1.33}).

Cell $2$-representations of fiat $2$-categories have particularly nice properties for so-called strongly regular
cells, see Subsection~\ref{s1.5}. In particular, the main result of \cite{MM4} asserts that in this cases
cell $2$-representations exhaust all simple transitive $2$-representations. This is the main case of our
study in this paper. Our main result is that the endomorphism category of such a cell $2$-representation is 
equivalent to  $\Bbbk\text{-}\mathrm{mod}$, see  Theorem~\ref{thm15} in  Section~\ref{s4}.

Along the way, we prove two further interesting results. Firstly, we establish $2$-fullness
for cell $2$-representations with respect to the class of $1$-morphisms in the two-sided cell, see 
Corollary~\ref{cor8} in Subsection~\ref{s3.4}. Secondly, we completely describe fiat $2$-categories which have 
only one two-sided cell $\mathcal{J}$ apart from the identities, in the case when our $2$-category is 
$\mathcal{J}$-simple  in the sense of \cite{MM2}, see Theorem~\ref{thm11} in Subsection~\ref{s3.6}.
This can be viewed as a $2$-analogue of Artin-Wedderburn Theorem.

We present various examples in Section~\ref{s6}, including the fiat $2$-category of Soergel bimodules acting on the
principal block of the BGG category $\mathcal{O}$ and the fiat $2$-category associated with the
$\mathfrak{sl}_2$-categorification of Chuang and Rouquier. Finally, in Section~\ref{s7}, we introduce 
and investigate a natural setup for the study of graded fiat $2$-categories.
\vspace{5mm}

{\bf Remark.} The original version of the paper appeared on arxiv in July 2012. The present version 
is a substantial revision of the original one which takes into account that since the publication of the
original version several results and assumptions became obsolete due to further developments presented
in \cite{MM3,MM4,MM5}. 
\vspace{5mm}

\noindent
{\bf Acknowledgment.} A substantial part of the paper was written during a visit of the second author to 
Uppsala University, whose  hospitality is gratefully acknowledged. The visit was supported by the Swedish 
Research Council and the Department of Mathematics. The first author is partially supported by the 
Swedish Research Council and the Royal Swedish Academy of Sciences. The second author is partially 
supported by ERC grant PERG07-GA-2010-268109 and EPSRC grant EP/K011782/1.

\section{Preliminaries}\label{s1}

We denote by $\mathbb{N}$ and $\mathbb{N}_0$ the sets of positive and non-negative integers, respectively.

\subsection{Various $2$-categories}\label{s1.1}

In this paper by a $2$-category we mean a strict locally small $2$-category (see \cite{Le} for a concise 
introduction to $2$-categories and bicategories). Let $\cC$ be a $2$-category. 
We will use $\mathtt{i},\mathtt{j},\dots$  to denote objects in $\cC$;
$1$-morphisms in $\cC$ will be denoted by $\mathrm{F},\mathrm{G},\dots$; $2$-morphisms in $\cC$ will be denoted 
by $\alpha,\beta,\dots$. For $\mathtt{i}\in\cC$ we will denote by $\mathbbm{1}_{\mathtt{i}}$ the corresponding
identity $1$-morphisms. For a $1$-morphism $\mathrm{F}$ we will denote by $\mathrm{id}_{\mathrm{F}}$ the 
corresponding identity $2$-morphisms. 

Denote by $\mathbf{Cat}$ the $2$-category of all small categories. Let $\Bbbk$ be an algebraically closed field. 
Denote by $\mathfrak{A}_{\Bbbk}$ the $2$-category whose objects are small $\Bbbk$-linear fully additive categories;
$1$-morphisms are additive $\Bbbk$-linear functors and $2$-morphisms are natural transformations. Denote by 
$\mathfrak{A}_{\Bbbk}^f$ the full $2$-subcategory of $\mathfrak{A}_{\Bbbk}$ whose objects are 
fully additive categories $\mathcal{A}$ such that $\mathcal{A}$ has only finitely many 
isomorphism classes of indecomposable objects and all morphisms spaces in $\mathcal{A}$ are finite dimensional.
We also denote by $\mathfrak{R}_{\Bbbk}$ the full subcategory of $\mathfrak{A}_{\Bbbk}$ containing all
objects which are equivalent to $A\text{-}\mathrm{mod}$ for some finite dimensional associative 
$\Bbbk$-algebra $A$.

\subsection{Finitary and fiat $2$-categories}\label{s1.2}

A $2$-category $\cC$ is called {\em finitary (over $\Bbbk$)}, see  \cite{MM}, if the following conditions are satisfied:
\begin{itemize}
\item $\cC$ has finitely many objects;
\item for any $\mathtt{i},\mathtt{j}\in\cC$ we have $\cC(\mathtt{i},\mathtt{j})\in \mathfrak{A}_{\Bbbk}^f$
and horizontal composition is both additive and $\Bbbk$-linear;
\item for any $\mathtt{i}\in\cC$ the $1$-morphism $\mathbbm{1}_{\mathtt{i}}$ is indecomposable.
\end{itemize}
We will call  $\cC$  {\em weakly fiat} provided that it has a weak object preserving anti-autoequivalence $*$
and for any $1$-morphism $\mathrm{F}\in\cC(\mathtt{i},\mathtt{j})$ there exist $2$-morphisms $\alpha:\mathrm{F}\circ\mathrm{F}^*\to
\mathbbm{1}_{\mathtt{j}}$ and $\beta:\mathbbm{1}_{\mathtt{i}}\to \mathrm{F}^*\circ\mathrm{F}$ such that 
$\alpha_{\mathrm{F}}\circ_1\mathrm{F}(\beta)=\mathrm{id}_{\mathrm{F}}$ and
$\mathrm{F}^*(\alpha)\circ_1\beta_{\mathrm{F}^*}=\mathrm{id}_{\mathrm{F}^*}$.
If $*$ is involutive, then $\cC$ is called  {\em fiat}, see \cite{MM}.

\subsection{$2$-representations}\label{s1.3}

From now on $\cC$ will denote a finitary $2$-category. By a $2$-re\-pre\-sen\-ta\-tion of $\cC$ 
we mean a strict $2$-functor 
from $\cC$ to either $\mathfrak{A}_{\Bbbk}$ (additive $2$-re\-pre\-sen\-tation), $\mathfrak{A}_{\Bbbk}^f$ (finitary 
$2$-representation), or $\mathfrak{R}_{\Bbbk}$ (abelian $2$-representation). In this paper we define the
$2$-categories of $2$-representations of $\cC$ extending the setup (from the one in \cite{MM,MM2}) by considering
non-strict $2$-natural transformations between two $2$-representations $\mathbf{M}$ and $\mathbf{N}$.
Such a $2$-natural transformation $\Psi$ consists of the following data:
a map, which assigns to every $\mathtt{i}\in\cC$ a functor $\Psi_{\mathtt{i}}:\mathbf{M}(\mathtt{i})\to 
\mathbf{N}(\mathtt{i})$, and for any $1$-morphism $\mathrm{F}\in\cC(\mathtt{i},\mathtt{j})$ 
a natural isomorphism $\eta_{\mathrm{F}}=\eta^{\Psi}_{\mathrm{F}}:\Psi_{\mathtt{j}}\circ \mathbf{M}(\mathrm{F})\to 
\mathbf{N}(\mathrm{F})\circ \Psi_{\mathtt{i}}$, where naturality means that for any 
$\mathrm{G}\in\cC(\mathtt{i},\mathtt{j})$ and any $\alpha:\mathrm{F}\to \mathrm{G}$
we have 
\begin{displaymath}
\eta_{\mathrm{G}}\circ_1 (\mathrm{id}_{\Psi_{\mathtt{j}}}\circ_0 \mathbf{M}(\alpha))=
(\mathbf{N}(\alpha)\circ_0 \mathrm{id}_{\Psi_{\mathtt{i}}})\circ_1\eta_{\mathrm{F}}. 
\end{displaymath}
In other words, the left diagram on the following picture commutes up to $\eta_{\mathrm{F}}$ while the right
diagram commutes (compare with \cite[Subsection~2.2]{Kh}):
\begin{displaymath}
\xymatrix@R=3mm{ 
\mathbf{M}(\mathtt{i})\ar[rrr]^{\mathbf{M}(\mathrm{F})}\ar[ddd]_{\Psi_{\mathtt{i}}}&&&
\mathbf{M}(\mathtt{j})\ar[ddd]^{\Psi_{\mathtt{j}}}\\&&&\\&\ar@{<:}[ru]^{\eta_{\mathrm{F}}}&\\
\mathbf{N}(\mathtt{i})\ar[rrr]^{\mathbf{N}(\mathrm{F})}
&&&\mathbf{N}(\mathtt{j})\\
}
\quad
\xymatrix{ 
\Psi_{\mathtt{j}}\circ \mathbf{M}(\mathrm{F})\ar[rr]^{\eta_{\mathrm{F}}}
\ar[d]_{\mathrm{id}_{\Psi_{\mathtt{j}}}\circ_0 \mathbf{M}(\alpha)}
&& \mathbf{N}(\mathrm{F})\circ \Psi_{\mathtt{i}}
\ar[d]^{\mathbf{N}(\alpha)\circ_0 \mathrm{id}_{\Psi_{\mathtt{i}}}}\\
\Psi_{\mathtt{j}}\circ \mathbf{M}(\mathrm{G})\ar[rr]^{\eta_{\mathrm{G}}}
&& \mathbf{N}(\mathrm{G})\circ \Psi_{\mathtt{i}}
}
\end{displaymath}
Moreover, the isomorphisms $\eta$ should satisfy 
\begin{equation}\label{eq3}
\eta_{\mathrm{F}\circ_0\mathrm{G}}=(\mathrm{id}_{\mathbf{N}(\mathrm{F})}\circ_0\eta_{\mathrm{G}})\circ_1
(\eta_{\mathrm{F}}\circ_0\mathrm{id}_{\mathbf{M}(\mathrm{G})})
\end{equation}
for all composable $1$-morphisms $\mathrm{F}$ and $\mathrm{G}$.

Given two $2$-natural transformations $\Psi$ and $\Phi$ as above, a modification $\theta:\Psi\to\Phi$ is a map
which assigns to each $\mathtt{i}\in\cC$ a natural transformation $\theta_{\mathtt{i}}:\Psi_{\mathtt{i}}\to
\Phi_{\mathtt{i}}$ such that for any $\mathrm{F},\mathrm{G}\in\cC(\mathtt{i},\mathtt{j})$ and any 
$\alpha:\mathrm{F}\to \mathrm{G}$ we have 
\begin{equation}\label{eq33}
\eta_{\mathrm{G}}^{\Phi}\circ_1 (\theta_{\mathtt{j}}\circ_0 \mathbf{M}(\alpha))=
(\mathbf{N}(\alpha)\circ_0 \theta_{\mathtt{i}})\circ_1\eta_{\mathrm{F}}^{\Psi}. 
\end{equation}

\begin{proposition}\label{prop61}
Together with non-strict $2$-natural transformations and modifications as 
defined above, $2$-representations of $\cC$ form a $2$-category. 
\end{proposition}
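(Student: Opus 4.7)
The plan is to verify the data of a $2$-category piece by piece: specify composition of $1$-morphisms (i.e.\ of $2$-natural transformations), identity $1$-morphisms, vertical and horizontal composition of $2$-morphisms (i.e.\ modifications), identity $2$-morphisms, and then check associativity, unitality and the interchange law. Since the construction is the standard one for pseudonatural transformations between pseudofunctors, no essentially new idea is needed; the point is to confirm that our strict $2$-functor setting together with the invertibility of the $\eta_{\mathrm{F}}$ makes every verification go through.

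First I would define the composition of $\Psi:\mathbf{M}\to\mathbf{N}$ and $\Phi:\mathbf{N}\to\mathbf{L}$ by $(\Phi\Psi)_{\mathtt{i}}:=\Phi_{\mathtt{i}}\circ\Psi_{\mathtt{i}}$ and
\begin{displaymath}
\eta_{\mathrm{F}}^{\Phi\Psi}:=(\eta_{\mathrm{F}}^{\Phi}\circ_0\mathrm{id}_{\Psi_{\mathtt{i}}})\circ_1
(\mathrm{id}_{\Phi_{\mathtt{j}}}\circ_0\eta_{\mathrm{F}}^{\Psi}),
\end{displaymath}
which is manifestly a natural isomorphism, and check that it satisfies \eqref{eq3} for composable $\mathrm{F},\mathrm{G}$: this reduces, by the interchange law in $\mathbf{Cat}$, to pasting the two hexagons coming from \eqref{eq3} applied separately to $\Psi$ and $\Phi$. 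The identity $1$-morphism at $\mathbf{M}$ has $\Psi_{\mathtt{i}}=\mathrm{Id}_{\mathbf{M}(\mathtt{i})}$ and $\eta_{\mathrm{F}}=\mathrm{id}_{\mathbf{M}(\mathrm{F})}$, for which the axioms are trivial. Associativity of composition of $2$-natural transformations follows from associativity of horizontal and vertical composition of natural transformations in $\mathbf{Cat}$, again via interchange.

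Next I would define vertical composition of modifications $\theta:\Psi\to\Phi$ and $\theta':\Phi\to\Xi$ componentwise by $(\theta'\theta)_{\mathtt{i}}:=\theta'_{\mathtt{i}}\circ_1\theta_{\mathtt{i}}$, and horizontal composition of $\theta:\Psi\to\Phi$ (where $\Psi,\Phi:\mathbf{M}\to\mathbf{N}$) and $\theta':\Psi'\to\Phi'$ (where $\Psi',\Phi':\mathbf{N}\to\mathbf{L}$) by $(\theta'\theta)_{\mathtt{i}}:=\theta'_{\mathtt{i}}\circ_0\theta_{\mathtt{i}}$. The key verification is that the modification axiom \eqref{eq33} is preserved: for vertical composition this is a direct pasting; for horizontal composition one pastes two instances of \eqref{eq33} along the middle $2$-natural transformation, using the explicit formula for $\eta^{\Phi'\Phi}$ and $\eta^{\Psi'\Psi}$ defined above. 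Identity modifications are given by componentwise identity natural transformations, and the unit laws are immediate.

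The main obstacle, and the step where I would spend the most care, is the interchange law $(\theta'_2\theta_2)(\theta'_1\theta_1)=(\theta'_2\theta'_1)(\theta_2\theta_1)$ for modifications, together with the compatibility of horizontal composition of modifications with the coherence isomorphisms $\eta$ of the composite $2$-natural transformations. Both identities reduce, after unwinding components, to the interchange law for natural transformations in $\mathbf{Cat}$ applied pointwise at each $\mathtt{i}\in\cC$, so the only real work is bookkeeping the horizontal/vertical composition symbols $\circ_0,\circ_1$ without confusion. Once this is done, all $2$-category axioms follow.
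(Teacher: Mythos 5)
Your proposal is correct and follows essentially the same route as the paper: you use the identical formula for the coherence isomorphism $\eta^{\Phi\Psi}$ of a composite $2$-natural transformation, verify \eqref{eq3} by pasting, and check the modification axiom \eqref{eq33} for componentwise horizontal and vertical composites, which is exactly the content of the paper's two commutative diagrams. The extra remarks on associativity, unitality and interchange are fine but, as you note, reduce to the corresponding facts in $\mathbf{Cat}$ applied pointwise, which is why the paper leaves them implicit.
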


Our notation for these $2$-categories is $\cC\text{-}\mathrm{amod}$ in the case of additive representations 
and  $\cC\text{-}\mathrm{afmod}$ in the case of finitary representations. To define the
$2$-category $\cC\text{-}\mathrm{mod}$ for abelian representations we additionally assume that
all $\Psi_{\mathtt{i}}$ are right exact (this assumption is missing in \cite{MM}). 

\begin{proof}
To check that these are $2$-categories, we have to  verify that (strict) composition of non-strict 
$2$-natural transformations is a non-strict $2$-natural transformation and that both horizontal and vertical 
compositions of modifications are modifications.  The first fact follows by defining 
\begin{displaymath}
\eta_{\mathrm{F}}^{\Psi'\circ\Psi}:=(\eta_{\mathrm{F}}^{\Psi'}\circ_0 \mathrm{id}_{\Psi_{\mathtt{i}}})\circ_1
(\mathrm{id}_{\Psi'_{\mathtt{j}}}\circ_0\eta_{\mathrm{F}}^{\Psi})
\end{displaymath}
and then checking \eqref{eq3} (which is a straightforward computation). Since the diagrams
\begin{displaymath}
\xymatrix@C=6mm{
\Psi'_{\mathtt{j}}\circ\Psi_{\mathtt{j}}\circ\mathbf{M}(\mathrm{F})
\ar[rrr]^{\mathrm{id}_{\Psi'_{\mathtt{j}}}\circ_0 \theta_{\mathtt{j}}\circ_0 \mathrm{id}_{\mathbf{M}(\mathrm{F})}}
\ar[d]_{\mathrm{id}_{\Psi'_{\mathtt{j}}}\circ_0\eta^{\Psi}_{\mathrm{F}}}
&&&\Psi'_{\mathtt{j}}\circ\Phi_{\mathtt{j}}\circ\mathbf{M}(\mathrm{F})
\ar[rrr]^{\theta'_{\mathtt{j}}\circ_0 \mathrm{id}_{\Phi_{\mathtt{j}}}\circ_0 \mathrm{id}_{\mathbf{M}(\mathrm{F})}}
\ar[d]|-{\mathrm{id}_{\Psi'_{\mathtt{j}}}\circ_0\eta^{\Phi}_{\mathrm{F}}}
&&&\Phi'_{\mathtt{j}}\circ\Phi_{\mathtt{j}}\circ\mathbf{M}(\mathrm{F})
\ar[d]^{\mathrm{id}_{\Phi'_{\mathtt{j}}}\circ_0\eta^{\Phi}_{\mathrm{F}}}\\
\Psi'_{\mathtt{j}}\circ\mathbf{N}(\mathrm{F})\circ\Psi_{\mathtt{i}}
\ar[rrr]^{\mathrm{id}_{\Psi'_{\mathtt{j}}}\circ_0 \mathrm{id}_{\mathbf{N}(\mathrm{F})}\circ_0\theta_{\mathtt{i}}}
\ar[d]_{\eta^{\Psi'}_{\mathrm{F}}\circ_0\mathrm{id}_{\Psi_{\mathtt{i}}}}
&&&\Psi'_{\mathtt{j}}\circ\mathbf{N}(\mathrm{F})\circ\Phi_{\mathtt{i}}
\ar[rrr]^{\theta'_{\mathtt{j}}\circ_0 \mathrm{id}_{\mathbf{N}(\mathrm{F})}\circ_0 \mathrm{id}_{\Phi_{\mathtt{i}}}}
\ar[d]|-{\eta^{\Psi'}_{\mathrm{F}}\circ_0\mathrm{id}_{\Phi_{\mathtt{i}}}}
&&&\Phi'_{\mathtt{j}}\circ\mathbf{N}(\mathrm{F})\circ\Phi_{\mathtt{i}}
\ar[d]^{\eta^{\Phi'}_{\mathrm{F}}\circ_0\mathrm{id}_{\Phi_{\mathtt{i}}}}\\
\mathbf{K}(\mathrm{F})\circ\Psi'_{\mathtt{i}}\circ\Psi_{\mathtt{i}}
\ar[rrr]^{\mathrm{id}_{\mathbf{K}(\mathrm{F})}\circ_0 \mathrm{id}_{\Psi'_{\mathtt{i}}}\circ_0\theta_{\mathtt{i}} }
&&&\mathbf{K}(\mathrm{F})\circ\Psi'_{\mathtt{i}}\circ\Phi_{\mathtt{i}}
\ar[rrr]^{\mathrm{id}_{\mathbf{K}(\mathrm{F})}\circ_0 \theta'_{\mathtt{i}}\circ_0 \mathrm{id}_{\Phi_{\mathtt{i}}}}
&&&\mathbf{K}(\mathrm{F})\circ\Phi'_{\mathtt{i}}\circ\Phi_{\mathtt{i}}
}
\end{displaymath}
\hspace{2mm}

\begin{displaymath}
\xymatrix{
\Psi_{\mathtt{j}}\circ\mathbf{M}(\mathrm{F})\ar[rr]^{\theta_{\mathtt{j}}\circ_0 \mathbf{M}(\alpha)}
\ar[d]_{\eta_{\mathrm{F}}^{\Psi}}&& 
\Phi_{\mathtt{j}}\circ\mathbf{M}(\mathrm{G})\ar[rr]^{\tau_{\mathtt{j}}\circ_0 \mathrm{id}_{\mathbf{M}(\mathrm{G})}}
\ar[d]_{\eta_{\mathrm{G}}^{\Phi}}
&&\Sigma_{\mathtt{j}}\circ\mathbf{M}(\mathrm{G})\ar[d]^{\eta_{\mathrm{G}}^{\Sigma}}\\
\mathbf{N}(\mathrm{F})\circ\Psi_{\mathtt{i}}\ar[rr]^{\mathbf{N}(\alpha)\circ_0 \theta_{\mathtt{i}}}&&
\mathbf{N}(\mathrm{G})\circ\Phi_{\mathtt{i}}\ar[rr]^{\mathrm{id}_{\mathbf{N}(\mathrm{G})}\circ_0 \tau_{\mathtt{i}}}&&
\mathbf{N}(\mathrm{G})\circ\Sigma_{\mathtt{i}}
}
\end{displaymath}
commute, the latter two facts also follow.
\end{proof}

\subsection{Properties of $2$-natural transformations}\label{s1.33}

Let $\mathbf{M}$ and $\mathbf{N}$ be two $2$-representations of $\cC$ and $\Psi:\mathbf{M}\to \mathbf{N}$ a 
$2$-natural transformation. Given, for every $\mathtt{i}\in \cC$, a functor $\Phi_{\mathtt{i}}$ and an
isomorphism $\xi_{\mathtt{i}}:\Phi_{\mathtt{i}}\to \Psi_{\mathtt{i}}$, define, for every $1$-morphism
$\mathrm{F}\in \cC(\mathtt{i},\mathtt{j})$
\begin{displaymath}
\eta_{\mathrm{F}}^{\Phi}:=(\mathrm{id}_{\mathbf{N}(\mathrm{F})}\circ_0\xi_{\mathtt{i}}^{-1})\circ_1
\eta_{\mathrm{F}}^{\Psi}\circ_1(\xi_{\mathtt{j}}\circ_0 \mathrm{id}_{\mathbf{M}(\mathrm{F})}).
\end{displaymath}
Then it is straightforward to check that this extends $\Phi$ to a $2$-natural transformation. 

\begin{proposition}\label{prop62}
Let $\mathbf{M}$ and $\mathbf{N}$ be two $2$-representations of $\cC$ and $\Psi:\mathbf{M}\to \mathbf{N}$ a 
$2$-natural transformation. Assume that for every $\mathtt{i}\in \cC$ the functor $\Psi_{\mathtt{i}}$ is an
equivalence.  Then there exists an inverse $2$-natural transformation.
\end{proposition}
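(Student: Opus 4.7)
The plan is to define a $2$-natural transformation $\Phi:\mathbf{N}\to\mathbf{M}$ componentwise using quasi-inverses of the $\Psi_{\mathtt{i}}$, transport the coherence data of $\Psi$ across these, and then exhibit invertible modifications witnessing $\Phi$ as a quasi-inverse of $\Psi$ in $\cC\text{-}\mathrm{amod}$ (the argument will work uniformly for $\cC\text{-}\mathrm{afmod}$ and, using that right exactness is preserved by equivalences, for $\cC\text{-}\mathrm{mod}$).

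First, for every $\mathtt{i}\in\cC$, I would choose a quasi-inverse functor $\Phi_{\mathtt{i}}:\mathbf{N}(\mathtt{i})\to\mathbf{M}(\mathtt{i})$ to $\Psi_{\mathtt{i}}$ together with natural isomorphisms $\iota_{\mathtt{i}}:\mathrm{Id}_{\mathbf{M}(\mathtt{i})}\to\Phi_{\mathtt{i}}\circ\Psi_{\mathtt{i}}$ and $\epsilon_{\mathtt{i}}:\Psi_{\mathtt{i}}\circ\Phi_{\mathtt{i}}\to\mathrm{Id}_{\mathbf{N}(\mathtt{i})}$ satisfying the triangle identities (which can always be arranged for any equivalence of categories). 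For each $1$-morphism $\mathrm{F}\in\cC(\mathtt{i},\mathtt{j})$ I would then define
\begin{displaymath}
\eta^{\Phi}_{\mathrm{F}}:=(\iota_{\mathtt{j}}^{-1}\circ_0\mathrm{id}_{\mathbf{M}(\mathrm{F})\circ\Phi_{\mathtt{i}}})\circ_1(\mathrm{id}_{\Phi_{\mathtt{j}}}\circ_0(\eta^{\Psi}_{\mathrm{F}})^{-1}\circ_0\mathrm{id}_{\Phi_{\mathtt{i}}})\circ_1(\mathrm{id}_{\Phi_{\mathtt{j}}\circ\mathbf{N}(\mathrm{F})}\circ_0\epsilon_{\mathtt{i}}^{-1}),
\end{displaymath}
which is a natural isomorphism $\Phi_{\mathtt{j}}\circ\mathbf{N}(\mathrm{F})\to\mathbf{M}(\mathrm{F})\circ\Phi_{\mathtt{i}}$ since each of its three factors is.

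Naturality of $\eta^{\Phi}$ with respect to $2$-morphisms $\alpha:\mathrm{F}\to\mathrm{G}$ follows by sandwiching the corresponding naturality of $\eta^{\Psi}$ between $\iota^{-1}$ and $\epsilon^{-1}$ and using the interchange law. The main step is to verify the cocycle identity \eqref{eq3} for $\eta^{\Phi}$. Expanding the right hand side via the definition of $\eta^{\Phi}$, reordering horizontal composites by interchange, and then invoking \eqref{eq3} for $\eta^{\Psi}$ in its inverse form, reduces the discrepancy between the two sides to an occurrence of the composite $(\mathrm{id}_{\Psi_{\mathtt{i}}}\circ_0\iota_{\mathtt{i}}^{-1})\circ_1(\epsilon_{\mathtt{i}}^{-1}\circ_0\mathrm{id}_{\Psi_{\mathtt{i}}})$ acting on the $\Psi_{\mathtt{i}}$ strand produced by $(\eta^{\Psi}_{\mathrm{G}})^{-1}$; this composite is equal to $\mathrm{id}_{\Psi_{\mathtt{i}}}$ by the triangle identity for $(\iota_{\mathtt{i}},\epsilon_{\mathtt{i}})$. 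This diagram chase is the main obstacle.

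Finally, to justify the word ``inverse'', I would take the component families $(\iota_{\mathtt{i}})$ and $(\epsilon_{\mathtt{i}})$ as modifications $\mathrm{Id}_{\mathbf{M}}\to\Phi\circ\Psi$ and $\Psi\circ\Phi\to\mathrm{Id}_{\mathbf{N}}$; checking \eqref{eq33} for each unwinds directly to the definition of $\eta^{\Phi}$ together with one further use of the triangle identities, and the resulting invertible modifications exhibit $\Phi$ as a quasi-inverse of $\Psi$ in the $2$-category of $2$-representations.
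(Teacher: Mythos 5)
Your proof is correct and follows essentially the same route as the paper: the same transported coherence isomorphism $\eta^{\Phi}_{\mathrm{F}}$ (the paper writes it as the inverse of the composite that you write as a composite of inverses), and the same use of the triangle identity to collapse the middle of the cocycle verification (the paper's ``by adjunction''). You are in fact slightly more explicit on two points the paper leaves implicit, namely choosing $(\iota_{\mathtt{i}},\epsilon_{\mathtt{i}})$ to form an adjoint equivalence so that the triangle identities are actually available, and exhibiting the invertible modifications witnessing $\Phi$ as an inverse rather than only checking that $\Phi$ is a $2$-natural transformation.
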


\begin{proof}
For any $\mathtt{i}\in\cC$ choose an inverse equivalence  $\Phi_{\mathtt{i}}$ of $\Psi_{\mathtt{i}}$. Let
\begin{displaymath}
\xi_{\mathtt{i}}:\mathrm{Id}_{\mathbf{M}(\mathtt{i})}\to \Phi_{\mathtt{i}}\circ\Psi_{\mathtt{i}}\quad \text{ and }
\quad\zeta_{\mathtt{i}}: \Psi_{\mathtt{i}}\circ\Phi_{\mathtt{i}}\to\mathrm{Id}_{\mathbf{N}(\mathtt{i})} 
\end{displaymath}
be some isomorphisms. Define
\begin{displaymath}
\eta_{\mathrm{F}}^{\Phi}:=\big(
(\mathrm{id}_{\Phi_{\mathtt{j}}\circ\mathbf{N}(\mathrm{F})}\circ_0 \zeta_{\mathtt{i}})\circ_1
(\mathrm{id}_{\Phi_{\mathtt{j}}}\circ_0\eta_{\mathrm{F}}^{\Psi}\circ_0\mathrm{id}_{\Phi_{\mathtt{i}}})\circ_1
(\xi_{\mathtt{j}}\circ_0\mathrm{id}_{\mathbf{M}(\mathrm{F})\circ\Phi_{\mathtt{i}}})
\big)^{-1}.
\end{displaymath}
It is obvious that this produces a natural transformation, but we have to check that 
\begin{equation}\label{eq4}
 \eta_{\mathrm{F}\circ \mathrm{G}}^{\Phi}= (\mathrm{id}_{\mathbf{N}(\mathrm{F})}\circ_0\eta^{\Phi}_{\mathrm{G}})\circ_1
(\eta^{\Phi}_{\mathrm{F}}\circ_0\mathrm{id}_{\mathbf{M}(\mathrm{G})}).
\end{equation}
This follows from commutativity of the diagram
\begin{displaymath}
\xymatrix@C=-16mm{&&\mathbf{M}(\mathrm{F})\mathbf{M}(\mathrm{G})\Phi_{\mathtt{i}}\ar[dl] \ar[dr]&&&\\
& \Phi_{\mathtt{k}}\Psi_{\mathtt{k}}\mathbf{M}(\mathrm{F})\mathbf{M}(\mathrm{G})\Phi_{\mathtt{i}}\ar[dl] \ar[dr]  & &
\mathbf{M}(\mathrm{F})\Phi_{\mathtt{j}}\Psi_{\mathtt{j}}\mathbf{M}(\mathrm{G})\Phi_{\mathtt{i}} \ar[dl] \ar[dr]&&\\
\Phi_{\mathtt{k}}\mathbf{N}(\mathrm{F})\Psi_{\mathtt{j}}\mathbf{M}(\mathrm{G})\Phi_{\mathtt{i}} \ar[dr] &&
\Phi_{\mathtt{k}}\Psi_{\mathtt{k}}\mathbf{M}(\mathrm{F}) \Phi_{\mathtt{j}}\Psi_{\mathtt{j}}\mathbf{M}(\mathrm{G})\Phi_{\mathtt{i}} \ar[dl] \ar[dr]
&&
\mathbf{M}(\mathrm{F}) \Phi_{\mathtt{j}}\mathbf{N}(\mathrm{G})\Psi_{\mathtt{i}}\Phi_{\mathtt{i}}\ar[dl] \ar[dr]&\\&
\Phi_{\mathtt{k}}\mathbf{N}(\mathrm{F})\Psi_{\mathtt{j}}\Phi_{\mathtt{j}}\Psi_{\mathtt{j}}\mathbf{M}(\mathrm{G})\Phi_{\mathtt{i}}\ar[dl] \ar[dr]&&
\Phi_{\mathtt{k}}\Psi_{\mathtt{k}}\mathbf{M}(\mathrm{F}) \Phi_{\mathtt{j}}\mathbf{N}(\mathrm{G}) \Psi_{\mathtt{i}}\Phi_{\mathtt{i}}\ar[dl] \ar[dr]&&
\mathbf{M}(\mathrm{F}) \Phi_{\mathtt{j}}\mathbf{N}(\mathrm{G}) \ar[dl] \\
\Phi_{\mathtt{k}}\mathbf{N}(\mathrm{F})\Psi_{\mathtt{j}}\mathbf{M}(\mathrm{G})\Phi_{\mathtt{i}}\ar[dr]&& 
\Phi_{\mathtt{k}}\mathbf{N}(\mathrm{F}) \Psi_{\mathtt{j}}\Phi_{\mathtt{j}}\mathbf{N}(\mathrm{G}) \Psi_{\mathtt{i}}\Phi_{\mathtt{i}}\ar[dl] \ar[dr]&&
\Phi_{\mathtt{k}}\Psi_{\mathtt{k}}\mathbf{M}(\mathrm{F}) \Phi_{\mathtt{j}}\mathbf{N}(\mathrm{G})\ar[dl]&\\
& \Phi_{\mathtt{k}}\mathbf{N}(\mathrm{F})\mathbf{N}(\mathrm{G})\Psi_{\mathtt{i}}\Phi_{\mathtt{i}}  \ar[dr]&&\Phi_{\mathtt{k}}\mathbf{N}(\mathrm{F})\Psi_{\mathtt{j}}\Phi_{\mathtt{j}}\mathbf{N}(\mathrm{G}) \ar[dl]&&\\
&& \Phi_{\mathtt{k}}\mathbf{N}(\mathrm{F})\mathbf{N}(\mathrm{G})  &&&
}
\end{displaymath}
where the maps are the obvious ones (each of the maps has exactly one component of the form $\xi,\zeta$ or 
$\eta^\Psi$ and identities elsewhere). Commutativity of all squares is immediate. Then reading along the right border gives (the inverse of) the right hand side of \eqref{eq4}. Computing (the inverse of) the left hand side of \eqref{eq4} directly, using the definition of $\eta^\Phi$ and property \eqref{eq3} of  $\eta_{\mathrm{F}\circ \mathrm{G}}^{\Psi}$, gives the left border of the diagram, after noting that the third and fourth morphism in this path compose to the identity on $\Phi_{\mathtt{k}}\mathbf{N}(\mathrm{F})\Psi_{\mathtt{j}}\mathbf{M}(\mathrm{G})\Phi_{\mathtt{i}}$ by adjunction. Therefore \eqref{eq4} holds and this extends $\Phi$ to a $2$-natural transformation.
\end{proof}

In this scenario we will say that the $2$-representations $\mathbf{M}$ and $\mathbf{N}$ are {\em equivalent}.

\subsection{Abelianization and identities}\label{s1.35}

Denote by $\overline{\,\cdot\,}:\cC\text{-}\mathrm{afmod}\to \cC\text{-}\mathrm{mod}$ the abelianization
$2$-functor defined as in \cite[Subsection~4.2]{MM2}: for $\mathbf{M}\in \cC\text{-}\mathrm{afmod}$ and
$\mathtt{i}\in\cC$, the category $\overline{\mathbf{M}}(\mathtt{i})$ consists of all diagrams of the form
$X\overset{\alpha}{\longrightarrow}Y$, where $X,Y\in {\mathbf{M}}(\mathtt{i})$ and $\alpha$ is a morphism in
${\mathbf{M}}(\mathtt{i})$. Morphisms  in $\overline{\mathbf{M}}(\mathtt{i})$ are commutative squares modulo
factorization of the right downwards arrow using a homotopy. The $2$-action of $\cC$ on 
$\overline{\mathbf{M}}(\mathtt{i})$ is defined component-wise.

For any $2$-representation $\mathbf{M}$ of $\cC$ and any non-negative integer $k$, we denote by $\imorphism_k$ 
the $2$-natural transformation from $\mathbf{M}$ to $\mathbf{M}$ given by assigning to each $\mathtt{i}\in\cC$
the functor 
\begin{displaymath}
\underbrace{\mathrm{Id}_{\mathbf{M}(\mathtt{i})}\oplus \mathrm{Id}_{\mathbf{M}(\mathtt{i})}\oplus 
\dots \oplus \mathrm{Id}_{\mathbf{M}(\mathtt{i})}}_{k\,\text{summands}} 
\end{displaymath}
and defining $\eta_{\mathrm{F}}^{\imorphism_k}$ as $\mathrm{id}_{\mathrm{F}}\oplus \dots\oplus \mathrm{id}_{\mathrm{F}}$
(again with $k$ summands).

\subsection{Principal $2$-representations and additive subrepresentations}\label{s1.4}

For $\mathtt{i}\in\cC$ we denote by $\mathbf{P}_{\mathtt{i}}$ the principal $2$-representation
$\cC(\mathtt{i},{}_-)\in \cC\text{-}\mathrm{afmod}$. For any $\mathbf{M}\in\cC\text{-}\mathrm{amod}$
we have the usual Yoneda Lemma (see \cite[Subsection~2.1]{Le} and compare to \cite[Lemma~9]{MM2}):

\begin{lemma}\label{lem0}
\begin{equation}\label{eq1}
\mathrm{Hom}_{\ccC\text{-}\mathrm{amod}} (\mathbf{P}_{\mathtt{i}},\mathbf{M})\cong\mathbf{M}(\mathtt{i}).
\end{equation}
\end{lemma}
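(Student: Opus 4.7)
The plan is to construct mutually quasi-inverse functors between $\mathrm{Hom}_{\ccC\text{-}\mathrm{amod}}(\mathbb{P}_{\mathtt{i}},\mathbf{M})$ and $\mathbf{M}(\mathtt{i})$. One direction is evaluation at the identity $1$-morphism: set $\Theta(\Psi):=\Psi_{\mathtt{i}}(\mathbbm{1}_{\mathtt{i}})$ on objects and $\Theta(\theta):=(\theta_{\mathtt{i}})_{\mathbbm{1}_{\mathtt{i}}}$ on modifications. In the reverse direction, for $X\in\mathbf{M}(\mathtt{i})$ define $\Xi(X):=\Psi^X$ by $\Psi^X_{\mathtt{j}}(\mathrm{G}):=\mathbf{M}(\mathrm{G})(X)$ on objects and $\Psi^X_{\mathtt{j}}(\alpha):=\mathbf{M}(\alpha)_X$ on $2$-morphisms, with coherence data $\eta^{\Psi^X}_{\mathrm{F}}:=\mathrm{id}$. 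Because $\mathbf{M}$ is a strict $2$-functor, both the naturality axiom for $\eta^{\Psi^X}$ and the coherence equation \eqref{eq3} hold tautologically, so $\Psi^X$ is indeed a $2$-natural transformation; for $f:X\to Y$ in $\mathbf{M}(\mathtt{i})$, the assignment $\Xi(f)_{\mathtt{j}}(\mathrm{G}):=\mathbf{M}(\mathrm{G})(f)$ satisfies \eqref{eq33} by naturality of $\mathbf{M}(\alpha)$.

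The identity $\Theta\circ\Xi=\mathrm{Id}$ is immediate on the nose: $\Xi(X)_{\mathtt{i}}(\mathbbm{1}_{\mathtt{i}})=\mathbf{M}(\mathbbm{1}_{\mathtt{i}})(X)=X$ by strictness.

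The substantive step is to produce a natural isomorphism $\Xi\circ\Theta\cong\mathrm{Id}$. Given $\Psi$, set $X:=\Theta(\Psi)=\Psi_{\mathtt{i}}(\mathbbm{1}_{\mathtt{i}})$ and define a modification $\theta^{\Psi}:\Psi\to\Psi^X$ by
\begin{displaymath}
\theta^{\Psi}_{\mathtt{j}}(\mathrm{G}):=(\eta^{\Psi}_{\mathrm{G}})_{\mathbbm{1}_{\mathtt{i}}}\colon \Psi_{\mathtt{j}}(\mathrm{G})=\Psi_{\mathtt{j}}(\mathrm{G}\circ\mathbbm{1}_{\mathtt{i}})\longrightarrow\mathbf{M}(\mathrm{G})(\Psi_{\mathtt{i}}(\mathbbm{1}_{\mathtt{i}}))=\Psi^X_{\mathtt{j}}(\mathrm{G}),
\end{displaymath}
whose components are invertible because $\eta^{\Psi}_{\mathrm{G}}$ is a natural isomorphism. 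Naturality of $\theta^{\Psi}_{\mathtt{j}}$ as $\mathrm{G}$ varies under $2$-morphisms $\alpha:\mathrm{G}_1\to\mathrm{G}_2$ is the naturality axiom for $\eta^{\Psi}$ (the right-hand diagram in Subsection~\ref{s1.3}) evaluated at $\mathbbm{1}_{\mathtt{i}}$. For the modification identity \eqref{eq33}, I would use $\eta^{\Psi^X}=\mathrm{id}$ to trivialise one side and expand $(\eta^{\Psi}_{\mathrm{G}\circ\mathrm{H}})_{\mathbbm{1}_{\mathtt{i}}}$ on the other via \eqref{eq3}; after invoking naturality of $\mathbf{M}(\alpha)$, the identity collapses once more to the naturality axiom for $\eta^{\Psi}$ evaluated at the generic argument. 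Naturality of $\Psi\mapsto\theta^{\Psi}$ in $\Psi$ then follows by applying \eqref{eq33} to an ambient modification and evaluating at $\mathbbm{1}_{\mathtt{i}}$.

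The main obstacle is the bookkeeping in verifying \eqref{eq33}: one must track interchange between $\circ_0$ and $\circ_1$ carefully, use strictness of $\mathbf{M}$ to suppress associators that would otherwise appear, and recognise the resulting identity as a consequence of the naturality of $\eta^{\Psi}$ together with \eqref{eq3}. Once the correct modification $\theta^{\Psi}$ is guessed from the universal value at $\mathbbm{1}_{\mathtt{i}}$, every remaining check is a standard $2$-categorical diagram chase, the whole argument being essentially an extension of the classical enriched Yoneda lemma to the present non-strict $2$-natural setting.
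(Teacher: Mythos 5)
Your proposal is correct and follows essentially the same route as the paper: evaluation at $\mathbbm{1}_{\mathtt{i}}$ in one direction, the strict representable $2$-natural transformation in the other, and the invertible modification with components $(\eta^{\Psi}_{\mathrm{G}})_{\mathbbm{1}_{\mathtt{i}}}$ connecting $\Psi$ to its strictification. The paper simply outsources the existence and uniqueness of the strict transformation $\Phi$ sending $\mathbbm{1}_{\mathtt{i}}$ to $X$ to \cite[Lemma~9]{MM2} and leaves the verification of \eqref{eq33} implicit, whereas you spell these out.
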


\begin{proof}
Let $\Psi:\mathbf{P}_{\mathtt{i}}\to \mathbf{M}$ be a $2$-natural transformation and set 
$X:=\Psi_{\mathtt{i}}(\mathbbm{1}_{\mathtt{i}})$. Denote by $\Phi:\mathbf{P}_{\mathtt{i}}\to \mathbf{M}$ the unique
strict $2$-natural transformation sending $\mathbbm{1}_{\mathtt{i}}$ to $X$ (see \cite[Lemma~9]{MM2}).
Then, for any $1$-morphism $\mathrm{F}\in\cC(\mathtt{i},\mathtt{j})$, we have the natural isomorphism
\begin{displaymath}
(\theta_{\mathtt{j}})_{\mathrm{F}}:=(\eta_{\mathrm{F}}^{\Psi})_{\mathbbm{1}_{\mathtt{i}}}:
\Psi_{\mathtt{j}}(\mathrm{F})\to \mathbf{M}(\mathrm{F})\,\Psi_{\mathtt{i}}(\mathbbm{1}_{\mathtt{i}})=
\mathbf{M}(\mathrm{F})\,X=\Phi_{\mathtt{j}}(\mathrm{F}).
\end{displaymath}
This gives us an (invertible) modification $\theta$ from $\Psi$ to $\Phi$ and the claim follows.
\end{proof}

Given $\mathbf{M}\in\cC\text{-}\mathrm{mod}$ and $X\in\mathbf{M}(\mathtt{i})$ for some $\mathtt{i}\in\cC$,
define $\mathbf{M}_{X}\in \cC\text{-}\mathrm{afmod}$ by restricting $\mathbf{M}$ to the full subcategories
$\mathrm{add}(\mathrm{F}\,X)$, where $\mathrm{F}$ runs through the set of all $1$-mor\-phisms in $\cC(\mathtt{i},\mathtt{j})$, $\mathtt{j}\in\cC$. 

\subsection{The multisemigroup of $\cC$ and cells}\label{s1.5}

The set $\mathcal{S}[\cC]$ of isomorphism classes of indecomposable $1$-morphisms in $\cC$ has the natural structure
of a multisemigroup induced by horizontal composition, see \cite[Subsection~3.1]{MM2} (see also \cite{KM} for more
details on multisemigroups). Let $\leq_L$, $\leq_R$ and $\leq_J$ denote the natural left, right and two-sided
orders on $\mathcal{S}[\cC]$, respectively. For example, $\mathrm{F}\leq_L \mathrm{G}$ means that for some 
$1$-morphism $\mathrm{H}$ the composition $\mathrm{H}\circ \mathrm{F}$ contains a direct summand isomorphic to 
$\mathrm{G}$. Equivalence classes with respect to $\leq_L$ are called {\em left cells}. Right and two-sided cells are 
defined analogously. Cells correspond exactly to Green's equivalence classes for the multisemigroup $\mathcal{S}[\cC]$.

A two-sided cell $\mathcal{J}$ is called {\em regular} if different left (right) cells in $\mathcal{J}$ are not
comparable with respect to the left (right) order. A two-sided cell $\mathcal{J}$ is called {\em strongly regular} 
if it is regular and, moreover, the intersection of any left and any right cell inside $\mathcal{J}$ consists
of exactly one element.

Given a left cell $\mathcal{L}$, there exists an $\mathtt{i}_{\mathcal{L}}\in\cC$ such that every $1$-morphism 
$\mathrm{F}\in \mathcal{L}$ belongs to $\cC(\mathtt{i}_{\mathcal{L}},\mathtt{j})$ for some $\mathtt{j}\in\cC$. 
Similarly, given a right cell $\mathcal{R}$, there exists a $\mathtt{j}_{\mathcal{R}}\in\cC$ such that every 
$1$-morphism $\mathrm{F}\in \mathcal{R}$ belongs to $\cC(\mathtt{i},\mathtt{j}_{\mathcal{R}})$ for some 
$\mathtt{i}\in\cC$.

\subsection{Cell $2$-representations}\label{s1.6}

Let $\mathcal{L}$ be a left cell and $\mathtt{i}=\mathtt{i}_{\mathcal{L}}$. Consider 
$\overline{\mathbf{P}}_{\mathtt{i}}$. For an indecomposable $1$-morphism $\mathrm{F}$ in some 
$\cC(\mathtt{i},\mathtt{j})$ denote by $L_{\mathrm{F}}$ the unique simple top of the indecomposable projective 
module $0\to \mathrm{F}$ in $\overline{\mathbf{P}}_{\mathtt{i}}(\mathtt{j})$. 
By \cite[Pro\-po\-si\-tion~17]{MM}, there exists a unique $\mathrm{G}_{\mathcal{L}}\in \mathcal{L}$ (called the
{\em Duflo involution} in $\mathcal{L}$) such that the indecomposable projective module $0\to\mathbbm{1}_{\mathtt{i}}$
has a unique quotient $N$ such that the simple socle of $N$ is isomorphic to $L_{\mathrm{G}_{\mathcal{L}}}$ and
$\mathrm{F}\, N/L_{\mathrm{G}_{\mathcal{L}}}=0$ for any $\mathrm{F}\in \mathcal{L}$. Set
$Q:=\mathrm{G}_{\mathcal{L}} \, L_{\mathrm{G}_{\mathcal{L}}}$. Then the additive $2$-representation
$\mathbf{C}_{\mathcal{L}}:=\left(\overline{\mathbf{P}}_{\mathtt{i}}\right)_{Q}$ is called the {\em additive cell} 
$2$-representation of $\cC$ associated to $\mathcal{L}$. The abelianization $\overline{\mathbf{C}}_{\mathcal{L}}$
of $\mathbf{C}_{\mathcal{L}}$ is called the {\em abelian cell} $2$-rep\-re\-sen\-ta\-ti\-on of $\cC$ associated 
to $\mathcal{L}$. For $\mathrm{F}\in \mathcal{L}$ we set $P_{\mathrm{F}}:=\mathrm{F}\, L_{\mathrm{G}_{\mathcal{L}}}$,
which we also identify with the indecomposable projective object $0\to \mathrm{F}\, L_{\mathrm{G}_{\mathcal{L}}}$
in $\overline{\mathbf{C}}_{\mathcal{L}}$.

\section{A special case of $2$-Schur's lemma}\label{s2}

In this section we prove a special case of Theorem~\ref{thm12} under one additional assumption of surjectivity
of the action of the center. It turns out that this assumption of surjectivity allows us to use
a short and elegant argument.

\subsection{The claim}\label{s2.1} 
The following is a special case of Theorem~\ref{thm12}:

\begin{theorem}\label{thm1}
Let $\cC$ be a fiat $2$-category, $\mathcal{J}$ a strongly regular two-sided cell of $\cC$ and $\mathcal{L}$ a
left cell in $\mathcal{J}$. Set $\mathtt{i}=\mathtt{i}_{\mathcal{L}}$ and $\mathrm{G}=\mathrm{G}_{\mathcal{L}}$. 
Assume that the natural map
\begin{equation}\label{eq2}
\begin{array}{ccc}
\mathrm{End}_{\ccC}(\mathbbm{1}_{\mathtt{i}})&\longrightarrow&
\mathrm{End}_{\mathbf{C}_{\mathcal{L}}}(P_{\mathrm{G}})\\
\varphi&\mapsto& \mathbf{C}_{\mathcal{L}}(\varphi)_{P_{\mathrm{G}}}
\end{array}
\end{equation}
is surjective. Then any endomorphism of $\mathbf{C}_{\mathcal{L}}$  is isomorphic to $\imorphism_k$ for some $k$ 
(in the category $\mathrm{End}_{\ccC\text{-}\mathrm{afmod}}(\mathbf{C}_{\mathcal{L}})$). Similarly, any endomorphism of $\overline{\mathbf{C}}_{\mathcal{L}}$  is isomorphic to $\imorphism_k$ for some $k$
(in the category $\mathrm{End}_{\ccC\text{-}\mathrm{mod}}(\overline{\mathbf{C}}_{\mathcal{L}})$).
\end{theorem}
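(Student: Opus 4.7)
The plan is to reduce the study of an arbitrary endomorphism $\Psi\in\mathrm{End}_{\ccC\text{-}\mathrm{afmod}}(\mathbf{C}_{\mathcal{L}})$ to the single object $Z:=\Psi_{\mathtt{i}}(P_{\mathrm{G}})\in\mathbf{C}_{\mathcal{L}}(\mathtt{i})$. Since every object of $\mathbf{C}_{\mathcal{L}}(\mathtt{j})$ is a summand of some $\mathrm{F}\,P_{\mathrm{G}}$ with $\mathrm{F}\in\cC(\mathtt{i},\mathtt{j})$, and the coherence isomorphism $\eta^{\Psi}_{\mathrm{F}}$ identifies $\Psi_{\mathtt{j}}(\mathrm{F}\,P_{\mathrm{G}})$ with $\mathbf{C}_{\mathcal{L}}(\mathrm{F})\,Z$, the pair $(Z,\eta^{\Psi})$ determines $\Psi$ on objects up to canonical isomorphism. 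Once I show that $Z\cong P_{\mathrm{G}}^{\oplus k}$ for some $k\ge 0$, any chosen isomorphism $\iota:Z\to P_{\mathrm{G}}^{\oplus k}$ can be propagated to an invertible modification $\theta:\Psi\to\imorphism_k$.

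The key technical point is to describe the interaction of $\Psi$ with endomorphisms of $\mathbbm{1}_{\mathtt{i}}$. Substituting both arguments in \eqref{eq3} equal to $\mathbbm{1}_{\mathtt{i}}$ and using invertibility of $\eta^{\Psi}$ forces $\eta^{\Psi}_{\mathbbm{1}_{\mathtt{i}}}=\mathrm{id}_{\Psi_{\mathtt{i}}}$; naturality of $\Psi$ applied to an arbitrary $\varphi\in\mathrm{End}_{\ccC}(\mathbbm{1}_{\mathtt{i}})$ then gives
\begin{displaymath}
\Psi_{\mathtt{i}}\bigl(\mathbf{C}_{\mathcal{L}}(\varphi)_{Y}\bigr)=\mathbf{C}_{\mathcal{L}}(\varphi)_{\Psi_{\mathtt{i}}(Y)}\qquad\text{for every }Y\in\mathbf{C}_{\mathcal{L}}(\mathtt{i}).
\end{displaymath}
By the surjectivity hypothesis \eqref{eq2}, every endomorphism of $P_{\mathrm{G}}$ has the form $\mathbf{C}_{\mathcal{L}}(\varphi)_{P_{\mathrm{G}}}$, so the displayed identity shows that $\Psi_{\mathtt{i}}$ induces a unital algebra homomorphism $\mathrm{End}_{\mathbf{C}_{\mathcal{L}}}(P_{\mathrm{G}})\to\mathrm{End}_{\mathbf{C}_{\mathcal{L}}}(Z)$, realised concretely by the $\mathrm{End}_{\ccC}(\mathbbm{1}_{\mathtt{i}})$-action on $Z$. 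Combined with the structural description of $\mathbf{C}_{\mathcal{L}}$ in the strongly regular case and Krull--Schmidt, this forces $Z\cong P_{\mathrm{G}}^{\oplus k}$.

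With $\iota:Z\xrightarrow{\sim}P_{\mathrm{G}}^{\oplus k}$ fixed, set $\theta_{\mathtt{i},P_{\mathrm{G}}}:=\iota$ and for each $\mathrm{F}\in\cC(\mathtt{i},\mathtt{j})$ define
\begin{displaymath}
\theta_{\mathtt{j},\mathrm{F}\,P_{\mathrm{G}}}:=\mathbf{C}_{\mathcal{L}}(\mathrm{F})(\iota)\circ(\eta^{\Psi}_{\mathrm{F}})_{P_{\mathrm{G}}},
\end{displaymath}
extending to summands via idempotent splitting. Consistency whenever the same object arises through different horizontal compositions of $1$-morphisms is automatic from the cocycle \eqref{eq3}, so $\theta$ is well-defined on objects. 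The principal obstacle is verifying the modification axiom \eqref{eq33}, i.e.\ naturality of $\theta_{\mathtt{j}}$ with respect to all morphisms in $\mathbf{C}_{\mathcal{L}}(\mathtt{j})$. By the fiat adjunction, an arbitrary morphism $\mathrm{F}\,P_{\mathrm{G}}\to\mathrm{F}'\,P_{\mathrm{G}}$ corresponds to one $P_{\mathrm{G}}\to\mathrm{F}^{*}\mathrm{F}'\,P_{\mathrm{G}}$ in $\mathbf{C}_{\mathcal{L}}(\mathtt{i})$; decomposing $\mathrm{F}^{*}\mathrm{F}'$ into indecomposables of $\cC(\mathtt{i},\mathtt{i})$ reduces the check to two cases. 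The summand $\mathbbm{1}_{\mathtt{i}}$ contributes precisely the intertwining relation of the previous paragraph, which is exactly where the surjectivity of \eqref{eq2} is essential; each indecomposable summand $\mathrm{H}\in\mathcal{J}$ is handled by iterated use of \eqref{eq3} together with the compatibility already established on $\mathrm{H}\,P_{\mathrm{G}}$. Finally, the abelian version follows because any right-exact endomorphism of $\overline{\mathbf{C}}_{\mathcal{L}}$ is determined by its restriction to the projectives $\mathbf{C}_{\mathcal{L}}$, so the modification above extends canonically to $\overline{\mathbf{C}}_{\mathcal{L}}$ by functoriality of the abelianization.
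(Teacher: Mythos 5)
The decisive step --- showing that $Z=\Psi_{\mathtt{i}}(P_{\mathrm{G}})$ is isomorphic to $P_{\mathrm{G}}^{\oplus k}$ --- is exactly where your argument has a genuine gap. What you actually establish is that $\Psi_{\mathtt{i}}$ intertwines the action of $\mathrm{End}_{\ccC}(\mathbbm{1}_{\mathtt{i}})$ and hence, via \eqref{eq2}, yields a unital algebra homomorphism $\mathrm{End}_{\mathbf{C}_{\mathcal{L}}}(P_{\mathrm{G}})\to\mathrm{End}_{\mathbf{C}_{\mathcal{L}}}(Z)$. But this is far too weak: the central action exists on \emph{every} object of $\mathbf{C}_{\mathcal{L}}(\mathtt{i})$, so nothing in this datum distinguishes $P_{\mathrm{G}}$ from the other indecomposables $P_{\mathrm{H}}$ with $\mathrm{H}\in\mathcal{L}\cap\cC(\mathtt{i},\mathtt{i})$; Krull--Schmidt then tells you only that $Z$ is \emph{some} direct sum of such $P_{\mathrm{H}}$. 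To pin $Z$ down one must use that $\Psi$ commutes with the action of \emph{all} $2$-morphisms, not only with endomorphisms of the identity $1$-morphism. The paper's proof passes to the abelianization, observes that $\mathrm{Ann}_{\ccC}(\Psi_{\mathtt{i}}(X))\supseteq\mathrm{Ann}_{\ccC}(X)$, and proves (Lemma~\ref{lem2}) that $L_{\mathrm{G}}$ is characterized up to multiplicity by its annihilator ideal, the separating $2$-morphisms being the identities $\mathrm{id}_{\mathrm{F}^*}$ for $\mathrm{F}\in\mathcal{L}$, $\mathrm{F}\neq\mathrm{G}$, which annihilate $L_{\mathrm{G}}$ but not $L_{\mathrm{F}}$. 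None of this appears in your proposal, and the phrase ``combined with the structural description \dots and Krull--Schmidt'' does not supply it.

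A second, related omission: the hypothesis \eqref{eq2} is needed in the paper for a purpose you never confront, namely to exclude the possibility that $\Psi_{\mathtt{i}}(L_{\mathrm{G}})$ contains a non-split self-extension of $L_{\mathrm{G}}$ with the same annihilator; this is where locality of $\mathrm{End}_{\ccC}(\mathbbm{1}_{\mathtt{i}})$ is used to produce a nilpotent central element separating the two. Working purely in the additive category hides this issue, but it resurfaces when you try to deduce the abelian statement from the additive one at the end; the paper runs the implication in the opposite direction (abelian first, then restrict to projectives) precisely because simples and their annihilators live in the abelianization. The remaining ingredients of your outline --- the normalization $\eta^{\Psi}_{\mathbbm{1}_{\mathtt{i}}}=\mathrm{id}_{\Psi_{\mathtt{i}}}$ and the construction of the invertible modification from an isomorphism on the generator, as in Lemma~\ref{lem0} --- are sound and match the paper, but they only yield a proof once the identification of $\Psi_{\mathtt{i}}$ on the generator has actually been established.
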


\subsection{Annihilators of various objects in $\overline{\mathbf{C}}_{\mathcal{L}}$}\label{s2.2} 

For any $2$-representation $\mathbf{M}$ of $\cC$ and $X\in \mathbf{M}(\mathtt{j})$ for some $\mathtt{j}$, 
let $\mathrm{Ann}_{\ccC}(X)$ denote the left $2$-ideal of $\cC$ consisting of all $2$-mor\-phisms $\alpha$ which 
annihilate $X$. The key observation to prove Theorem~\ref{thm1} is the following:

\begin{lemma}\label{lem2}
Under the assumption of Theorem~\ref{thm1}, if $X\in \overline{\mathbf{C}}_{\mathcal{L}}(\mathtt{i})$ is such that 
$\mathrm{Ann}_{\ccC}(X)\supset \mathrm{Ann}_{\ccC}(L_{\mathrm{G}})$, then $X\in \mathrm{add}(L_{\mathrm{G}})$.
\end{lemma}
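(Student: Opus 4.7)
The plan is to deduce the structure of $X$ from the annihilator hypothesis by combining three ingredients: the central action of $\mathrm{End}_{\ccC}(\mathbbm{1}_{\mathtt{i}})$ on objects of $\overline{\mathbf{C}}_{\mathcal{L}}(\mathtt{i})$, the adjunctions provided by the fiat structure, and the characterising properties of the Duflo $1$-morphism $\mathrm{G}=\mathrm{G}_{\mathcal{L}}$.

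First, I would constrain the possible composition factors of $X$. For any indecomposable $1$-morphism $\mathrm{F}\in\cC(\mathtt{i},\mathtt{j})$ with $\mathrm{F}\, L_{\mathrm{G}}=0$ in $\overline{\mathbf{C}}_{\mathcal{L}}(\mathtt{j})$, the identity $2$-morphism $\mathrm{id}_{\mathrm{F}}$ satisfies $(\mathrm{id}_{\mathrm{F}})_{L_{\mathrm{G}}}=\mathrm{id}_{0}=0$, so it lies in $\mathrm{Ann}_{\ccC}(L_{\mathrm{G}})$; by hypothesis it also annihilates $X$, forcing $\mathrm{F}\, X=0$. Combined with the description of simples of $\overline{\mathbf{C}}_{\mathcal{L}}(\mathtt{i})$ via the projectives $P_{\mathrm{F}}=\mathrm{F}\, L_{\mathrm{G}}$, this pushes every composition factor of $X$ into the set $\{L_{\mathrm{F}} : \mathrm{F}\in\mathcal{L}\cap\cC(\mathtt{i},\mathtt{i})\}$.

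Next, I would use the surjectivity assumption \eqref{eq2} to pin down which of these factors can occur. Together with Schur's Lemma applied to the simple $L_{\mathrm{G}}$, the map $\rho:\mathrm{End}_{\ccC}(\mathbbm{1}_{\mathtt{i}}) \twoheadrightarrow \mathrm{End}(L_{\mathrm{G}})\cong\Bbbk$ is surjective with maximal kernel $I$, giving $I\subseteq \mathrm{Ann}_{\ccC}(L_{\mathrm{G}}) \subseteq \mathrm{Ann}_{\ccC}(X)$, so $\mathrm{End}_{\ccC}(\mathbbm{1}_{\mathtt{i}})$ acts on $X$, and on every simple subquotient $L_{\mathrm{F}}$, through $\Bbbk$. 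Strong regularity of $\mathcal{J}$ together with the characterisation of the Duflo element from \cite[Proposition~17]{MM} should then identify $\mathrm{G}$ as the unique $\mathrm{F}\in\mathcal{L}\cap\cC(\mathtt{i},\mathtt{i})$ for which $\mathrm{Ann}_{\ccC}(L_{\mathrm{F}})$ contains $\mathrm{Ann}_{\ccC}(L_{\mathrm{G}})$, forcing $\mathrm{F}=\mathrm{G}$ for every composition factor $L_{\mathrm{F}}$ of $X$.

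Finally, I would rule out non-split self-extensions of $L_{\mathrm{G}}$ inside $X$. The projective cover $P_{\mathrm{G}}\twoheadrightarrow L_{\mathrm{G}}$ has kernel $\mathrm{rad}\,P_{\mathrm{G}}$, and \eqref{eq2} realises every endomorphism of $P_{\mathrm{G}}$ as $\mathbf{C}_{\mathcal{L}}(\varphi)_{P_{\mathrm{G}}}$ for some $\varphi\in\mathrm{End}_{\ccC}(\mathbbm{1}_{\mathtt{i}})$. Those whose image lies in $\mathrm{rad}\,P_{\mathrm{G}}$ necessarily come from $\varphi\in I$, which annihilate $L_{\mathrm{G}}$; hence there are no nonzero maps $\mathrm{rad}\,P_{\mathrm{G}}\to L_{\mathrm{G}}$, giving $\mathrm{Ext}^{1}_{\overline{\mathbf{C}}_{\mathcal{L}}(\mathtt{i})}(L_{\mathrm{G}},L_{\mathrm{G}})=0$. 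Combined with the previous step, this yields $X\in\mathrm{add}(L_{\mathrm{G}})$.

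The main obstacle is the middle paragraph: distinguishing $L_{\mathrm{G}}$ from other simples $L_{\mathrm{F}}$ with $\mathrm{F}\in\mathcal{L}\cap\cC(\mathtt{i},\mathtt{i})$ and $\mathrm{F}\ne\mathrm{G}$ purely through the annihilator data. This is precisely where strong regularity, the Duflo characterisation and hypothesis \eqref{eq2} must combine, and the technical heart of the argument will be to exhibit an explicit $2$-morphism in $\mathrm{Ann}_{\ccC}(L_{\mathrm{G}}) \setminus \mathrm{Ann}_{\ccC}(L_{\mathrm{F}})$ for every such $\mathrm{F}$, presumably extracted from an adjunction unit or counit involving $\mathrm{F}$ and $\mathrm{F}^{*}$.
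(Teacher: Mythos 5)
Your overall skeleton (restrict the composition factors, then handle self-extensions of $L_{\mathrm{G}}$) matches the paper's, but both of the two substantive steps are incomplete or wrong as written. For the step you yourself flag as the ``main obstacle'', the tool you need is already in your first paragraph; you only have to apply it to $\mathrm{F}^*$ instead of $\mathrm{F}$. If $\mathrm{F}\in\mathcal{L}$ and $\mathrm{F}\neq\mathrm{G}$, then $\mathrm{F}^*$ lies in the right cell $\mathcal{L}^*$, and strong regularity ($\mathcal{L}\cap\mathcal{L}^*=\{\mathrm{G}\}$ together with $\mathrm{G}^*\cong\mathrm{G}$) forces $\mathrm{F}^*\notin\mathcal{L}$; hence $\mathrm{F}^*\,L_{\mathrm{G}}=0$ while $\mathrm{F}^*\,L_{\mathrm{F}}\neq 0$ by \cite[Lemma~15]{MM}. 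So the separating $2$-morphism is simply $\mathrm{id}_{\mathrm{F}^*}\in\mathrm{Ann}_{\ccC}(L_{\mathrm{G}})\setminus\mathrm{Ann}_{\ccC}(L_{\mathrm{F}})$ --- no adjunction unit or counit is needed --- and exactness of $\mathrm{F}^*$ then forces every composition factor of $X$ to be $L_{\mathrm{G}}$. Your detour through the central character in the second paragraph cannot do this job: $\mathrm{End}_{\ccC}(\mathbbm{1}_{\mathtt{i}})$ acts on \emph{every} simple through a scalar, so that information does not distinguish $L_{\mathrm{G}}$ from the other $L_{\mathrm{F}}$.

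The third step contains a genuine error: $\mathrm{Ext}^1(L_{\mathrm{G}},L_{\mathrm{G}})=0$ is false under the hypotheses of Theorem~\ref{thm1}. Take $A=\Bbbk[x]/(x^2)$ and $\cC=\cC_{\mathcal{C},Z}$ as in Subsections~\ref{s3.5} and~\ref{s6.4}: the cell $2$-representation is the defining one, $P_{\mathrm{G}}\cong {}_AA$ is a non-split self-extension of $L_{\mathrm{G}}$, and \eqref{eq2} is surjective since $Z(A)=A\cong\mathrm{End}_A(A)$. The flaw in your inference is the jump from ``such an endomorphism of $P_{\mathrm{G}}$ comes from $\varphi\in I$, which annihilates $L_{\mathrm{G}}$'' to ``the endomorphism is zero'': $\varphi$ killing $L_{\mathrm{G}}$ says $\mathbf{C}_{\mathcal{L}}(\varphi)_{L_{\mathrm{G}}}=0$, not $\mathbf{C}_{\mathcal{L}}(\varphi)_{P_{\mathrm{G}}}=0$ (multiplication by $x$ in the example is nonzero on $P_{\mathrm{G}}$ and zero on $L_{\mathrm{G}}$). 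The correct conclusion is not that self-extensions are absent, but that they violate the annihilator hypothesis: for a non-split extension $0\to L_{\mathrm{G}}\to X\to L_{\mathrm{G}}\to 0$, lift a nonzero nilpotent endomorphism of $X$ through $P_{\mathrm{G}}\tto X$ and use \eqref{eq2} to write it as $\mathbf{C}_{\mathcal{L}}(\alpha)_X$ for some $\alpha\in\mathrm{End}_{\ccC}(\mathbbm{1}_{\mathtt{i}})$; since this algebra is local and $\alpha^2$ annihilates $X$ while $\alpha$ does not, $\alpha$ is nilpotent, hence lies in $\mathrm{Ann}_{\ccC}(L_{\mathrm{G}})$ by Schur's lemma but not in $\mathrm{Ann}_{\ccC}(X)$, contradicting $\mathrm{Ann}_{\ccC}(X)\supseteq\mathrm{Ann}_{\ccC}(L_{\mathrm{G}})$. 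With that replacement (and the final observation that any non-semisimple $X$ with all factors $L_{\mathrm{G}}$ has such a subquotient), the proof closes.
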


\begin{proof}
Let $\mathrm{F}\in\mathcal{L}$ be different from $\mathrm{G}$. Then $\mathrm{F}^*\, L_{\mathrm{F}}\neq 0$
by \cite[Lemma~15]{MM}. At the same time, from the fact that $\mathcal{J}$ is strongly simple it follows
that $\mathrm{F}^*\not\in \mathcal{L}$. Therefore $\mathrm{F}^*\, L_{\mathrm{G}}= 0$ by \cite[Lemma~15]{MM}.
Hence $\mathrm{id}_{\mathrm{F}^*}\in \mathrm{Ann}_{\ccC}(L_{\mathrm{G}})$ and at the same time
$\mathrm{id}_{\mathrm{F}^*}\not \in \mathrm{Ann}_{\ccC}(L_{\mathrm{F}})$.

Since $\mathrm{F}^*$ is exact, the previous paragraph implies that for any $X$ satisfying 
$\mathrm{Ann}_{\ccC}(X)\supset \mathrm{Ann}_{\ccC}(L_{\mathrm{G}})$,  every simple subquotient of 
$X$ is isomorphic to $L_{\mathrm{G}}$. Assume now that $X$ is indecomposable such that there is a short exact
sequence
\begin{displaymath}
0\to  L_{\mathrm{G}}\to X\to L_{\mathrm{G}}\to 0.
\end{displaymath}
Then there is a short exact sequence $K\hookrightarrow P_{\mathrm{G}}\tto X$ and an endomorphism of
$P_{\mathrm{G}}$ which induces a non-trivial nilpotent endomorphism of $X$. From \eqref{eq2}, it follows that the
natural map
\begin{displaymath}
\begin{array}{ccc}
\mathrm{End}_{\ccC}(\mathbbm{1}_{\mathtt{i}})&\longrightarrow&
\mathrm{End}_{\mathbf{C}_{\mathcal{L}}}(X)\\
\varphi&\mapsto& \mathbf{C}_{\mathcal{L}}(\varphi)_{X}
\end{array}
\end{displaymath}
is surjective. Let $\alpha\in \mathrm{End}_{\ccC}(\mathbbm{1}_{\mathtt{i}})$ be a $2$-morphism which produces a
non-trivial nilpotent endomorphism of $X$. Then $\alpha\not \in \mathrm{Ann}_{\ccC}(X)$ while
$\alpha^2 \in \mathrm{Ann}_{\ccC}(X)$. At the same time, $\mathrm{End}_{\ccC}(\mathbbm{1}_{\mathtt{i}})$ is a 
local finite dimensional $\Bbbk$-algebra (see Subsection~\ref{s1.2}), and hence $\alpha$ is either nilpotent 
or invertible. But $\alpha$ cannot be invertible as $\alpha^2$ annihilates $X$. Therefore, $\alpha$ is nilpotent.
This implies that $\alpha\in \mathrm{Ann}_{\ccC}(L_{\mathrm{G}})$ as any nonzero endomorphism of 
$L_{\mathrm{G}}$ is invertible by Schur's lemma. 

Finally, if $Y$ is an indecomposable module, every simple subquotient of which is isomorphic to $L_{\mathrm{G}}$,
then $Y$ has a subquotient $X$ as in the previous paragraph. Therefore 
$\mathrm{Ann}_{\ccC}(L_{\mathrm{G}})\not\subset \mathrm{Ann}_{\ccC}(Y)$. The claim of the lemma follows. 
\end{proof}

\subsection{Proof of Theorem~\ref{thm1}}\label{s2.3} 

Let $\Psi\in \mathrm{End}_{\ccC\text{-}\mathrm{mod}}(\overline{\mathbf{C}}_{\mathcal{L}})$. By Lemma~\ref{lem2},
we have $\Psi_{\mathtt{i}}(L_{\mathrm{G}})\cong L_{\mathrm{G}}^{\oplus k}$ for some non-negative integer $k$.
Now for any $\mathrm{F}\in\mathcal{L}$ we have an isomorphism 
\begin{displaymath}
\Psi_{\mathtt{j}}(P_{\mathrm{F}})=\Psi_{\mathtt{j}}(\mathrm{F}\, L_{\mathrm{G}}) \cong
\mathrm{F}\,L_{\mathrm{G}}^{\oplus k}\cong P_{\mathrm{F}}^{\oplus k},
\end{displaymath}
natural in $\mathrm{F}$. As $\Psi_{\mathtt{j}}$ is right exact, every indecomposable projective is of the form
$P_{\mathrm{F}}$, and $2$-morphisms in $\cC$ surject onto homomorphisms between indecomposable projectives
(see \cite[Subsection~4.5]{MM}), we have that $\Psi_{\mathtt{j}}$ is isomorphic to $\mathrm{Id}_{\overline{\mathbf{C}}_{\mathcal{L}}(\mathtt{j})}^{\oplus k}$. Clearly, $k$ does not depend
on $\mathtt{j}$. Now we repeat the argument from the proof of Lemma~\ref{lem0}. We have the natural isomorphisms
\begin{displaymath}
(\theta_{\mathtt{j}})_{\mathrm{F}\,L_{\mathrm{G}}}:=
(\eta_{\mathrm{F}}^{\Psi})_{L_{\mathrm{G}}}: 
\Psi_{\mathtt{j}}\circ\overline{\mathbf{C}}_{\mathcal{L}}(\mathrm{F})\,L_{\mathrm{G}}\to
\overline{\mathbf{C}}_{\mathcal{L}}(\mathrm{F})\circ (\imorphism_k)_{\mathtt{i}}\, L_{\mathrm{G}}=
\overline{\mathbf{C}}_{\mathcal{L}}(\mathrm{F})\, L_{\mathrm{G}}^{\oplus k},
\end{displaymath}
which give us an invertible modification $\theta$ from $\Psi$ to $\imorphism_k$. This proves the
abelian part of Theorem~\ref{thm1}.

To prove the additive part we just note that any 
$\Psi\in \mathrm{End}_{\ccC\text{-}\mathrm{mod}}({\mathbf{C}}_{\mathcal{L}})$ abelianizes to 
$\overline{\Psi}\in \mathrm{End}_{\ccC\text{-}\mathrm{mod}}(\overline{\mathbf{C}}_{\mathcal{L}})$.
Now the additive claim of Theorem~\ref{thm1} follows from the abelian claim by restricting to 
projective modules. \hfill$\square$

\section{Description of $\mathcal{J}$-simple fiat $2$-categories}\label{s3}

\subsection{Definition of $2$-full $2$-representations}\label{s3.1}

Let $\cC$ be a finitary category and $\mathbf{M}$ a $2$-rep\-re\-sen\-ta\-ti\-on of $\cC$. We will say that 
$\mathbf{M}$ is {\em $2$-full} provided that for any $1$-morphisms $\mathrm{F},\mathrm{G}\in\cC$ the 
representation map
\begin{equation}\label{eq8}
\mathrm{Hom}_{\ccC}(\mathrm{F},\mathrm{G})\to 
\mathrm{Hom}_{\mathfrak{X}}(\mathbf{M}(\mathrm{F}),\mathbf{M}(\mathrm{G})),
\end{equation}
where $\mathfrak{X}\in\{\mathfrak{A}_{\Bbbk},\mathfrak{A}_{\Bbbk}^f,\mathfrak{R}_{\Bbbk}\}$ is the target
$2$-category of $\mathbf{M}$, is surjective. In other words, $2$-morphisms in $\cC$ surject onto the 
space of natural transformations between functors.

If $\mathcal{J}$ is a $2$-sided cell of $\cC$, we will say that $\mathbf{M}$ is {\em $\mathcal{J}$-$2$-full} 
provided that for any $1$-morphisms $\mathrm{F},\mathrm{G}\in\mathcal{J}$ the  representation map
\eqref{eq8} is surjective.

\subsection{The $2$-category associated with $\mathcal{J}$}\label{s3.2}

Let now $\cC$ be a fiat $2$-category and $\mathcal{J}$ a two-sided cell in $\cC$.  Let $\mathcal{L}$ be a left cell 
of $\mathcal{J}$, $\mathrm{G}:=\mathrm{G}_{\mathcal{L}}$ and $\mathtt{i}:=\mathtt{i}_{\mathcal{L}}$. Let $\cJ$ be 
the unique maximal $2$-ideal of $\cC$ which does not contain $\mathrm{id}_{\mathrm{F}}$ for any 
$\mathrm{F}\in \mathcal{J}$ (see \cite[Theorem~15]{MM2}). Then the quotient $2$-category $\cC/\cJ$ is 
$\mathcal{J}$-simple (see \cite[Subsection~6.2]{MM2}). Denote by $\cC^{(\mathcal{J})}$ the $2$-full $2$-subcategory of 
$\cC/\cJ$ generated by $\mathbbm{1}_{\mathtt{i}_{\mathcal{L}}}$ and all $\mathrm{F}\in \mathcal{J}$ (and closed 
with respect to isomorphism of $1$-morphisms). We will call $\cC^{(\mathcal{J})}$ the {\em $\mathcal{J}$-simple 
$2$-category associated to $\mathcal{J}$}.

The cell $2$-representation $\mathbf{C}_{\mathcal{L}}$ of $\cC$ factors over $\cC/\cJ$ by \cite[Theorem~19]{MM2}
and hence restricts to a $2$-representation of $\cC^{(\mathcal{J})}$. Assume now that $\mathcal{J}$ is strongly 
regular. Then, by \cite[Proposition~32]{MM}, $\mathcal{J}$ remains a strongly regular two-sided cell 
in $\cC^{(\mathcal{J})}$. Moreover, using \cite[Subsection~6.5]{MM2}, the restriction of $\mathbf{C}_{\mathcal{L}}$ 
to  $\cC^{(\mathcal{J})}$ is equivalent to the corresponding cell $2$-representation of $\cC^{(\mathcal{J})}$.

\begin{center}
{\em For the remainder of this section we fix a strongly regular cell $\mathcal{J}$\\
and assume that $\cC=\cC^{(\mathcal{J})}$.}
\end{center}

\subsection{Detecting $2$-fullness}\label{s3.3}

We consider the cell $2$-representation $\mathbf{M}:=\overline{\mathbf{C}}_{\mathcal{L}}$.
We start our analysis with the following observation:

\begin{proposition}\label{prop4}
For $\mathrm{F}\in \mathcal{J}$ and $\mathtt{j}\in\cC$ consider the representation map 
\begin{equation}\label{eq5}
\mathrm{Hom}_{\ccC}(\mathrm{F},\mathbbm{1}_{\mathtt{j}})\to  
\mathrm{Hom}_{\mathfrak{R}_{\Bbbk}}(\mathbf{M}(\mathrm{F}),\mathbf{M}(\mathbbm{1}_{\mathtt{j}})).
\end{equation}
If this map is surjective for $\mathrm{F}=\mathrm{G}$ and $\mathtt{j}=\mathtt{i}$, then it is 
surjective for any $\mathrm{F}$ and $\mathtt{j}$.
\end{proposition}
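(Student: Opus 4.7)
The plan is to reduce the representation map at a general pair $(\mathrm{F},\mathtt{j})$ to the hypothesis at $(\mathrm{G},\mathtt{i})$ via the fiat adjunction combined with the combinatorial structure of the strongly regular cell $\mathcal{J}$. First I would exploit strong regularity to describe $\mathbf{M}(\mathtt{j})$: unless $\mathtt{j}=\mathtt{j}_{\mathcal{R}}$ for some right cell $\mathcal{R}\subseteq\mathcal{J}$, the category $\mathbf{M}(\mathtt{j})$ is zero and there is nothing to prove; otherwise there is a unique $\mathrm{H}\in\mathcal{L}\cap\mathcal{R}$ with $\mathrm{H}\colon\mathtt{i}\to\mathtt{j}$, and $P:=\mathrm{H}\,L_{\mathrm{G}}$ is (up to isomorphism) the unique indecomposable projective of $\mathbf{M}(\mathtt{j})$. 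By right exactness and a Yoneda-type argument analogous to the proof of Lemma~\ref{lem0}, a natural transformation $\varphi\colon\mathbf{M}(\mathrm{F})\to\mathrm{Id}_{\mathbf{M}(\mathtt{j})}$ is entirely determined by $\varphi_{P}\colon\mathrm{F}\,P\to P$, and the image of any $\beta\colon\mathrm{F}\to\mathbbm{1}_{\mathtt{j}}$ under the representation map is determined by $\mathbf{M}(\beta)_{P}$, so it suffices to show that $\varphi_{P}$ is in the image of $\beta\mapsto\mathbf{M}(\beta)_{P}$.

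Next I would transport the problem through the fiat adjunction $\mathrm{H}^{*}\dashv\mathrm{H}$. On the $\mathbf{M}$ side the hom-adjunction yields a natural bijection
\begin{displaymath}
\mathrm{Hom}_{\mathbf{M}(\mathtt{j})}(\mathrm{F}\,\mathrm{H}\,L_{\mathrm{G}},\,\mathrm{H}\,L_{\mathrm{G}})\;\xrightarrow{\sim}\;\mathrm{Hom}_{\mathbf{M}(\mathtt{i})}(\mathrm{H}^{*}\mathrm{F}\,\mathrm{H}\,L_{\mathrm{G}},\,L_{\mathrm{G}}),
\end{displaymath}
while on the $\cC$ side, whiskering by $\mathrm{id}_{\mathrm{H}}$ composed with the corresponding hom-adjunction gives
\begin{displaymath}
\mathrm{Hom}_{\ccC}(\mathrm{F},\mathbbm{1}_{\mathtt{j}})\;\longrightarrow\;\mathrm{Hom}_{\ccC}(\mathrm{H}^{*}\mathrm{F}\,\mathrm{H},\mathbbm{1}_{\mathtt{i}}).
\end{displaymath}
Because $\mathbf{M}$ is a strict 2-functor respecting horizontal composition and the unit/counit of the adjunction, these two transports intertwine the representation maps at $(\mathrm{F},\mathtt{j})$ and $(\mathrm{H}^{*}\mathrm{F}\,\mathrm{H},\mathtt{i})$; in particular, lifting $\varphi_{P}$ reduces to lifting its transpose through the representation map at $(\mathrm{H}^{*}\mathrm{F}\,\mathrm{H},\mathtt{i})$.

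Finally I would decompose $\mathrm{H}^{*}\mathrm{F}\,\mathrm{H}$ in $\cC(\mathtt{i},\mathtt{i})$. By \cite[Lemma~15]{MM}, indecomposable summands outside $\mathcal{L}$ annihilate $L_{\mathrm{G}}$, while summands inside $\mathcal{L}$ must belong to $\mathcal{L}\cap\mathcal{R}_{\mathtt{i}}=\{\mathrm{G}\}$ by strong regularity. Hence $\mathbf{M}(\mathrm{H}^{*}\mathrm{F}\,\mathrm{H})\,L_{\mathrm{G}}\cong P_{\mathrm{G}}^{\oplus a}$ for some $a$, and the transposed morphism is an $a$-tuple of morphisms $P_{\mathrm{G}}\to L_{\mathrm{G}}$. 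Applying the hypothesis at $(\mathrm{G},\mathtt{i})$ coordinate-wise and assembling the lifts via the summand decomposition yields a 2-morphism $\mathrm{H}^{*}\mathrm{F}\,\mathrm{H}\to\mathbbm{1}_{\mathtt{i}}$ whose inverse transpose is the desired $\beta$.

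The main obstacle is the intertwining claim of the second step, for which one must chase the adjunction unit/counit through the structure of $\mathbf{M}$; more subtly, one must ensure that the lift at the level of $\mathrm{H}^{*}\mathrm{F}\,\mathrm{H}$ actually corresponds, via the (possibly non-injective) whiskering map, to some $\beta\colon\mathrm{F}\to\mathbbm{1}_{\mathtt{j}}$. This is the point where the standing assumption that $m_{\mathrm{F}}$ is constant on right cells of $\mathcal{J}$ should enter, providing the uniformity needed for a coherent lift.
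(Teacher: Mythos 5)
Your overall strategy---transport a natural transformation $\varphi\colon\mathbf{M}(\mathrm{F})\to\mathrm{Id}_{\mathbf{M}(\mathtt{j})}$ through the adjunction to $\mathtt{i}$, lift it there using the hypothesis at $(\mathrm{G},\mathtt{i})$, and descend---is genuinely different from the paper's, but it has a gap exactly at the point you flag, and the idea needed to close it is absent from the proposal. The map $\mathrm{Hom}_{\ccC}(\mathrm{F},\mathbbm{1}_{\mathtt{j}})\to\mathrm{Hom}_{\ccC}(\mathrm{H}^{*}\mathrm{F}\mathrm{H},\mathbbm{1}_{\mathtt{i}})$ obtained by whiskering and composing with the counit is in general neither injective nor surjective onto the $\mathrm{G}$-isotypic part, so producing a $2$-morphism $\mathrm{H}^{*}\mathrm{F}\mathrm{H}\to\mathbbm{1}_{\mathtt{i}}$ coordinate-wise gives no candidate $\beta\colon\mathrm{F}\to\mathbbm{1}_{\mathtt{j}}$ at all; saying that the constancy of $m_{\mathrm{F}}$ ``should provide the uniformity needed'' is a placeholder, not an argument. (There is also a smaller error in the first step: $\mathbf{M}(\mathtt{j})$ has one indecomposable projective $P_{\mathrm{K}}$ for \emph{each} $\mathrm{K}\in\mathcal{L}\cap\cC(\mathtt{i},\mathtt{j})$, and the projective that controls a natural transformation out of $\mathbf{M}(\mathrm{F})$ is $P_{\mathrm{K}}$ for the unique simple $L_{\mathrm{K}}$ not annihilated by $\mathrm{F}$, which in general differs from your $P=\mathrm{H}\,L_{\mathrm{G}}$; keeping track of \emph{two} elements $\mathrm{H},\mathrm{K}$ of $\mathcal{L}$ with $\mathrm{F}$ a summand of $\mathrm{H}\mathrm{K}^{*}$ is unavoidable.)

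The paper closes the argument by a completely different mechanism that your proposal does not contain: it never lifts natural transformations directly. Since $\cC=\cC^{(\mathcal{J})}$ is $\mathcal{J}$-simple, the representation map is \emph{injective}, so it suffices to prove the equality of dimensions $\dim\mathrm{Hom}_{\ccC}(\mathrm{F},\mathbbm{1}_{\mathtt{j}})=\dim\mathrm{Hom}_{\mathfrak{R}_{\Bbbk}}(\mathbf{M}(\mathrm{F}),\mathbf{M}(\mathbbm{1}_{\mathtt{j}}))$. This is done by exploiting the $*$-symmetry $\mathrm{Hom}_{\ccC}(\mathrm{H},\mathrm{K})\cong\mathrm{Hom}_{\ccC}(\mathrm{K}^{*},\mathrm{H}^{*})$: adjunction identifies the first space with $b\,\mathrm{Hom}_{\ccC}(\mathrm{G},\mathbbm{1}_{\mathtt{i}})$ and the second with $a\,\mathrm{Hom}_{\ccC}(\mathrm{F},\mathbbm{1}_{\mathtt{j}})$, where $\mathrm{K}^{*}\mathrm{H}=b\mathrm{G}$ and $\mathrm{H}\mathrm{K}^{*}=a\mathrm{F}$; the multiplicities $a$ and $b$ are then computed inside the cell $2$-representation by evaluating on simples ($b=\dim\mathrm{Hom}(P_{\mathrm{H}},P_{\mathrm{K}})$ and $a=\dim\mathrm{End}(P_{\mathrm{G}})$), and the hypothesis at $(\mathrm{G},\mathtt{i})$ pins down $\dim\mathrm{Hom}_{\ccC}(\mathrm{G},\mathbbm{1}_{\mathtt{i}})=\dim\mathrm{End}(P_{\mathrm{G}})$. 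Comparing the two expressions and dividing by $\dim\mathrm{End}(P_{\mathrm{G}})$ forces the dimension equality, whence surjectivity. The two ingredients your proposal is missing are precisely the use of the anti-autoequivalence $*$ to relate $\mathrm{Hom}(\mathrm{F},\mathbbm{1}_{\mathtt{j}})$ to $\mathrm{Hom}(\mathrm{G},\mathbbm{1}_{\mathtt{i}})$ numerically, and the use of injectivity (from $\mathcal{J}$-simplicity) to convert a dimension count into surjectivity; without some substitute for these, the descent problem you identify remains open.
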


Note that both sides of \eqref{eq5} are empty unless $\mathrm{F}\in\cC(\mathtt{j},\mathtt{j})$.
As usual, to simplify notation we will use the module notation and write $\mathrm{F}\, X$ instead of
$\mathbf{M}(\mathrm{F})(X)$.

\begin{proof}
Let $\mathrm{H},\mathrm{K}\in \mathcal{L}$ and assume that $\mathrm{H},\mathrm{K}\in \cC(\mathtt{i},\mathtt{j})$. 
By strong regularity of $\mathcal{J}$ we have 
$\mathrm{H}\mathrm{K}^*=a\mathrm{F}$ for some $\mathrm{F}\in \mathcal{J}$ and $a\in\mathbb{N}$,
moreover, if we vary $\mathrm{H}$ and $\mathrm{K}$, we can obtain any $\mathrm{F}\in \mathcal{J}$ in this way.
To see that $\mathrm{H}\mathrm{K}^*\neq 0$, one evaluates $\mathrm{H}\mathrm{K}^*$ on $L_{\mathrm{K}}$
obtaining $\mathrm{K}^*L_{\mathrm{K}}=P_{\mathrm{G}}$ (by \cite[Corollary~38(a)]{MM}), and 
$\mathrm{H}P_{\mathrm{G}}\neq 0$ since $\mathrm{H}L_{\mathrm{G}}=P_{\mathrm{H}}\neq 0$.

Similarly, we have $\mathrm{K}^*\mathrm{H}=b\mathrm{G}$ for some $b\in\mathbb{N}$ since $\mathrm{K}^*\mathrm{H}$ 
is in the same left cell as $\mathrm{H}$ (which is $\mathcal{L}$) and the same right cell as $\mathrm{K}^*$ 
(which is $\mathcal{L}^*$), and $\mathcal{L}\cap\mathcal{L}^*=\{\mathrm{G}\}$ since $\mathcal{J}$ is strongly regular. 
Using the involution $*$ we have
\begin{displaymath}
\mathrm{Hom}_{\ccC}(\mathrm{H},\mathrm{K})\cong \mathrm{Hom}_{\ccC}(\mathrm{K}^*,\mathrm{H}^*).
\end{displaymath}
By adjunction, we have
\begin{equation}\label{eqn3}
\mathrm{Hom}_{\ccC}(\mathrm{H},\mathrm{K})\cong b\mathrm{Hom}_{\ccC}(\mathrm{G},\mathbbm{1}_{\mathtt{i}}),\quad\quad
\mathrm{Hom}_{\ccC}(\mathrm{K}^*,\mathrm{H}^*)\cong a\mathrm{Hom}_{\ccC}(\mathrm{F},\mathbbm{1}_{\mathtt{j}}).
\end{equation}

Evaluating $\mathrm{Hom}_{\ccC}(\mathrm{H},\mathrm{K})$ at $L_{\mathrm{G}}$ 
(which is surjective by \cite[Subsection~4.5]{MM}) and using adjunction, we get
\begin{displaymath}
\mathrm{Hom}_{\mathbf{M}(\mathtt{j})}(\mathrm{H}\,L_{\mathrm{G}},\mathrm{K}\,L_{\mathrm{G}}) \cong
b\mathrm{Hom}_{\mathbf{M}(\mathtt{i})}(\mathrm{G}\,L_{\mathrm{G}},L_{\mathrm{G}}).
\end{displaymath}
As $\mathrm{G}\,L_{\mathrm{G}}\cong P_{\mathrm{G}}$, the space 
$\mathrm{Hom}_{\mathbf{M}(\mathtt{i})}(\mathrm{G}\,L_{\mathrm{G}},L_{\mathrm{G}})$ is one-dimensional, and 
thus 
\begin{equation}\label{eqn1}
b=\dim \mathrm{Hom}_{\mathbf{M}(\mathtt{j})}(\mathrm{H}\,L_{\mathrm{G}},\mathrm{K}\,L_{\mathrm{G}})
\end{equation}
 
On the other hand, evaluating $\mathrm{Hom}_{\ccC}(\mathrm{K}^*,\mathrm{H}^*)$ at a multiplicity free direct sum 
$L$ of all simple modules in $\mathbf{M}(\mathtt{j})$ and using adjunction, we have
\begin{equation}\label{eqn2}
\mathrm{Hom}_{\mathbf{M}(\mathtt{i})}(\mathrm{K}^*\,L,\mathrm{H}^*\,L) \cong
a\mathrm{Hom}_{\mathbf{M}(\mathtt{j})}(\mathrm{F}\,L,L).
\end{equation}
By \cite[Lemma~12]{MM}, $\mathrm{K}^*\,L_{\mathrm{Q}} \neq 0$ for a direct summand $L_{\mathrm{Q}}$ of $L$, labeled by $\mathrm{Q} \in \mathcal{L}$, implies that $\mathrm{K}$ is in the same right cell as $\mathrm{Q}$. Strong regularity implies $\mathrm{Q}=\mathrm{K}$ and by \cite[Corollary~38(a)]{MM}, we have $\mathrm{K}^*\,L \cong P_\mathrm{G}$. Similarly $\mathrm{H}^*\,L \cong P_\mathrm{G}$ and the left hand side of \eqref{eqn2} is isomorphic to $\mathrm{End}_{\mathbf{M}(\mathtt{i})}(P_{\mathrm{G}})$.

As $\mathrm{F}$ is a direct summand of $\mathrm{H}\mathrm{K}^*$, again $L_{\mathrm{K}}$ is the only simple module which 
is not annihilated by $\mathrm{F}$. By \cite[Theorem~31]{MM5}, the module $\mathrm{F}\,L_{\mathrm{K}}$ is an 
indecomposable projective in $\mathbf{M}(\mathtt{j})$, namely $P_{\mathrm{H}}$. This means that 
$\dim \mathrm{Hom}_{\mathbf{M}(\mathtt{j})}(\mathrm{F}\,L,L)=1$ and hence 
\begin{equation}\label{eqn4}
a=\dim \mathrm{End}_{\mathbf{M}(\mathtt{i})}(P_{\mathrm{G}}).
\end{equation}
To proceed we need the following claim:

\begin{lemma}\label{lem5}
Let $A$ be a finite dimensional $\Bbbk$-algebra and $e, f\in A$ primitive idempotents. Assume that $\mathrm{F}$ is an 
exact endofunctor of $A\text{-}\mathrm{mod}$ such that $\mathrm{F}\,L_f\cong Ae$ and $\mathrm{F}\,L_g=0$ for any 
simple $L_g\not\cong L_f$. Then $\mathrm{F}$ is isomorphic to the functor $\mathrm{F}'$ given by tensoring with 
the bimodule $Ae\otimes_{\Bbbk} fA$ and, moreover, 
\begin{displaymath}
\mathrm{Hom}_{\mathfrak{R}_{\Bbbk}}(\mathrm{F},\mathrm{Id}_{A\text{-}\mathrm{mod}})\cong \mathrm{Hom}_A(Ae, Af). 
\end{displaymath}
\end{lemma}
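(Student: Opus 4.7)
The plan is to build an explicit natural isomorphism $\alpha:\mathrm{F}'\Rightarrow\mathrm{F}$ and then deduce the Hom statement by interpreting natural transformations as bimodule maps, in the style of Eilenberg--Watts. First, $\mathrm{F}'$ is exact: $fA$ is a direct summand of $A$ as a right $A$-module (hence flat), so $fA\otimes_A-$ is exact, and $Ae\otimes_{\Bbbk}-$ is exact. Since $fL_f\cong\Bbbk$ (Schur's lemma, as $\Bbbk$ is algebraically closed) and $fL_g=0$ for $g\neq f$, we have $\mathrm{F}'(L_f)\cong Ae$ and $\mathrm{F}'(L_g)=0$; thus $\mathrm{F}$ and $\mathrm{F}'$ agree on simples.

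To produce $\alpha$, let $\pi:Af\twoheadrightarrow L_f$ be the canonical surjection. By exactness of $\mathrm{F}$, the map $\mathrm{F}(\pi):\mathrm{F}(Af)\twoheadrightarrow Ae$ is surjective, so I choose any $\xi\in\mathrm{F}(Af)$ with $\mathrm{F}(\pi)(\xi)=e$. Then $\phi_\xi:Ae\to\mathrm{F}(Af)$, $x\mapsto x\xi$, is an $A$-linear section of $\mathrm{F}(\pi)$, since $\mathrm{F}(\pi)(x\xi)=xe=x$ for $x\in Ae$. For each $v\in fM$, the assignment $\psi_v:Af\to M$, $af\mapsto av$, is a well-defined $A$-module map; I define
\[
\alpha_M:Ae\otimes_{\Bbbk}fM\longrightarrow\mathrm{F}(M),\qquad x\otimes v\mapsto\mathrm{F}(\psi_v)(\phi_\xi(x)).
\]
Naturality in $M$ is immediate from $\psi_{g(v)}=g\circ\psi_v$. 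On $L_f$ with generator $v_0=f+\mathrm{rad}(Af)$, one has $\psi_{v_0}=\pi$, so $\alpha_{L_f}=\mathrm{F}(\pi)\circ\phi_\xi=\mathrm{id}_{Ae}$; on $L_g$, $g\neq f$, both sides are zero. Since $\mathrm{F}$ and $\mathrm{F}'$ are exact, a five-lemma induction on composition length promotes the isomorphism on simples to every $M\in A\text{-}\mathrm{mod}$.

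For the Hom statement, using $\mathrm{F}\cong(Ae\otimes_{\Bbbk}fA)\otimes_A-$ and $\mathrm{Id}=A\otimes_A-$, Eilenberg--Watts identifies $\mathrm{Hom}_{\mathfrak{R}_{\Bbbk}}(\mathrm{F},\mathrm{Id})$ with the space of $A$-$A$-bimodule maps $Ae\otimes_{\Bbbk}fA\to A$. Such a map $\phi$ is determined by $y:=\phi(e\otimes f)$, and the bimodule relations $e\cdot(e\otimes f)=e\otimes f=(e\otimes f)\cdot f$ force $y\in eAf$; conversely any $y\in eAf$ extends by $\phi(ae\otimes fb)=ayb$. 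Combined with the standard identification $\mathrm{Hom}_A(Ae,Af)\cong eAf$ via $\psi\mapsto\psi(e)$, this completes the proof. The main subtlety is that $\xi$ is chosen non-canonically; this is harmless because $\alpha$ is forced to be the identity on $L_f$, and exactness then propagates the isomorphism throughout $A\text{-}\mathrm{mod}$.
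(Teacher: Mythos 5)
Your proof is correct and follows essentially the same strategy as the paper's: produce a natural transformation $\mathrm{F}'\to\mathrm{F}$, check it is an isomorphism on simple modules, and bootstrap by exactness and induction on composition length, with the $\mathrm{Hom}$-space computation reducing to bimodule maps $Ae\otimes_{\Bbbk}fA\to A$. The only difference is presentational: you construct the transformation explicitly from a lift $\xi$ of $e$ along $\mathrm{F}(Af)\twoheadrightarrow\mathrm{F}(L_f)$ (thereby not needing its surjectivity a priori), where the paper instead invokes that $\mathrm{F}$ is a quotient of $\mathrm{F}'$ and settles the second claim by citing adjunction.
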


\begin{proof}
Let $L$ be a multiplicity free sum of all simple $A$-modules. As $\mathrm{F}\,L_f$ has simple top $L_e$, it follows
that $\mathrm{F}$ is a quotient of $\mathrm{F}'$, which gives us a surjective natural transformation
$\alpha:\mathrm{F}'\to \mathrm{F}$. Further, $\mathrm{F}\,L\cong \mathrm{F}'\,L$, meaning that 
$\alpha$ is an isomorphism when evaluated on simple modules. Using induction on the length of a module and the 
3-Lemma we obtain that $\alpha$ is an isomorphism, which proves the first claim. The second claim follows by adjunction.
\end{proof}

From Lemma~\ref{lem5} and surjectivity of \eqref{eq5} for $\mathrm{G}$,  we get
\begin{displaymath}
\dim \mathrm{Hom}_{\ccC}(\mathrm{G},\mathbbm{1}_{\mathtt{i}})
=\dim \mathrm{End}_{\mathbf{M}(\mathtt{i})}(P_{\mathrm{G}}).
\end{displaymath}
Using \eqref{eqn3}, \eqref{eqn1} and Lemma~\ref{lem5}, we have
\begin{equation*}
\begin{split}
\dim \mathrm{Hom}_{\ccC}(\mathrm{H},\mathrm{K})&=
\dim \mathrm{Hom}_{\mathbf{M}(\mathtt{j})}(\mathrm{H}\,L_{\mathrm{G}},\mathrm{K}\,L_{\mathrm{G}})\cdot 
\dim \mathrm{End}_{\mathbf{M}(\mathtt{i})}(P_{\mathrm{G}})\\&=
\dim \mathrm{Hom}_{\mathbf{M}(\mathtt{j})}(P_\mathrm{H},P_\mathrm{K})\cdot 
\dim \mathrm{End}_{\mathbf{M}(\mathtt{i})}(P_{\mathrm{G}}).
\end{split}
\end{equation*}
On the other hand, using \eqref{eqn3} and \eqref{eqn4} we have 
 \begin{displaymath}
\dim \mathrm{Hom}_{\ccC}(\mathrm{K}^*,\mathrm{H}^*) =
\dim \mathrm{Hom}_{\ccC}(\mathrm{F},\mathbbm{1}_{\mathtt{j}})\cdot 
\dim \mathrm{End}_{\mathbf{M}(\mathtt{i})}(P_{\mathrm{G}}).
\end{displaymath}
As $\cC$ is $\mathcal{J}$-simple, $\dim \mathrm{Hom}_{\ccC}(\mathrm{F},\mathbbm{1}_{\mathtt{j}}) \leq \dim \mathrm{Hom}_{\mathfrak{R}_{\Bbbk}}(\mathbf{M}(\mathrm{F}),\mathbf{M}(\mathbbm{1}_{\mathtt{j}})) $ and the latter by Lemma~\ref{lem5} is equal to $\dim \mathrm{Hom}_{\mathbf{M}(\mathtt{j})}(P_\mathrm{H},P_\mathrm{K})$.
Dividing through by $\dim \mathrm{End}_{\mathbf{M}(\mathtt{i})}(P_{\mathrm{G}})$ yields 
\begin{equation*}
\begin{split}
\dim \mathrm{Hom}_{\mathbf{M}(\mathtt{j})}(P_\mathrm{H},P_\mathrm{K})&= \dim \mathrm{Hom}_{\ccC}(\mathrm{F},\mathbbm{1}_{\mathtt{j}}) \\ &\leq  \dim \mathrm{Hom}_{\mathfrak{R}_{\Bbbk}}(\mathbf{M}(\mathrm{F}),\mathbf{M}(\mathbbm{1}_{\mathtt{j}})) \\
&= \dim \mathrm{Hom}_{\mathbf{M}(\mathtt{j})}(P_\mathrm{H},P_\mathrm{K})\end{split}
\end{equation*}
and hence
\begin{displaymath}
\dim \mathrm{Hom}_{\ccC}(\mathrm{F},\mathbbm{1}_{\mathtt{j}}) =  
\dim \mathrm{Hom}_{\mathfrak{R}_{\Bbbk}}(\mathbf{M}(\mathrm{F}),\mathbf{M}(\mathbbm{1}_{\mathtt{j}})).
\end{displaymath}
Injectivity of the representation map, which follows from $\mathcal{J}$-simplicity of $\cC$, now implies surjectivity and hence the statement of the proposition.
\end{proof}

\begin{proposition}\label{prop6}
Let $\mathrm{H},\mathrm{K}\in\cC(\mathtt{j},\mathtt{k})\cap\mathcal{J}$. If the representation map
\eqref{eq5} is surjective for $\mathrm{F}=\mathrm{G}$ and $\mathtt{i}=\mathtt{j}$, then the representation map
\begin{displaymath}
\mathrm{Hom}_{\ccC}(\mathrm{H},\mathrm{K})\to \mathrm{Hom}_{\mathfrak{R}_{\Bbbk}}(\mathbf{M}(\mathrm{H}),
\mathbf{M}(\mathrm{K}))
\end{displaymath}
is surjective. 
\end{proposition}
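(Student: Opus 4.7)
The approach will be to reduce the statement, via adjunction, to the setting already covered by Proposition~\ref{prop4}.

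The first ingredient is the adjunction between $\mathrm{K}^*$ and $\mathrm{K}$ coming from the fiat structure of $\cC$. Since $\mathbf{M}$ is a strict $2$-functor, it sends the corresponding units and counits to units and counits, producing an analogous adjunction between $\mathbf{M}(\mathrm{K}^*)$ and $\mathbf{M}(\mathrm{K})$ as functors. Both adjunctions organize into the square
\begin{displaymath}
\xymatrix{
\mathrm{Hom}_{\ccC}(\mathrm{H}, \mathrm{K}) \ar[rr] \ar[d]_{\cong} && \mathrm{Hom}_{\mathfrak{R}_{\Bbbk}}(\mathbf{M}(\mathrm{H}), \mathbf{M}(\mathrm{K})) \ar[d]^{\cong} \\
\mathrm{Hom}_{\ccC}(\mathrm{K}^*\mathrm{H}, \mathbbm{1}_{\mathtt{j}}) \ar[rr] && \mathrm{Hom}_{\mathfrak{R}_{\Bbbk}}(\mathbf{M}(\mathrm{K}^*\mathrm{H}), \mathbf{M}(\mathbbm{1}_{\mathtt{j}}))
}
\end{displaymath}
in which the horizontal arrows are the representation maps and the vertical arrows are the adjunction isomorphisms. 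Commutativity of this square amounts to the fact that the representation map is compatible with the adjunction, which in turn follows because it is induced by the very $2$-functor $\mathbf{M}$ that transports the adjunction. The explicit verification is the single notationally delicate point of the proof and I expect it to be the main obstacle, though conceptually standard.

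Given commutativity, it suffices to prove surjectivity of the bottom arrow. I would do this by decomposing $\mathrm{K}^*\mathrm{H}$ in $\cC^{(\mathcal{J})}$ as a direct sum of indecomposable $1$-morphisms. Mirroring the cell-theoretic computation at the start of the proof of Proposition~\ref{prop4}, strong regularity of $\mathcal{J}$ forces $\mathrm{K}^*\mathrm{H} \cong a\mathrm{F}$ for some $a \in \mathbb{N}_0$ and the unique $\mathrm{F} \in \mathcal{J}$ lying in the intersection of the left cell of $\mathrm{H}$ and the right cell of $\mathrm{K}^*$, with no identity $1$-morphism summands occurring.

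The bottom arrow of the square therefore splits as $a$ copies of the representation map
\begin{displaymath}
\mathrm{Hom}_{\ccC}(\mathrm{F}, \mathbbm{1}_{\mathtt{j}}) \longrightarrow \mathrm{Hom}_{\mathfrak{R}_{\Bbbk}}(\mathbf{M}(\mathrm{F}), \mathbf{M}(\mathbbm{1}_{\mathtt{j}})),
\end{displaymath}
which is surjective by Proposition~\ref{prop4}. Combined with commutativity of the square, this yields surjectivity of the top arrow and completes the proof; the content of Proposition~\ref{prop6} is thus a formal consequence of Proposition~\ref{prop4} together with adjunction transport under $\mathbf{M}$.
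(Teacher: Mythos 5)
Your proposal is correct and follows essentially the same route as the paper: the paper's proof also forms the commutative square whose vertical arrows are representation maps and whose horizontal arrows are the adjunction isomorphisms (plus the additive decomposition $\mathrm{K}^*\mathrm{H}\cong\mathrm{Q}^{\oplus m}$ with $\mathrm{Q}$ the unique element of $\mathcal{J}$ in the intersection of the left cell of $\mathrm{H}$ and the right cell of $\mathrm{K}^*$), and then invokes Proposition~\ref{prop4} together with $\mathcal{J}$-simplicity for the rightmost arrow. The only cosmetic difference is that the paper names the intermediate $1$-morphism $\mathrm{Q}$ rather than $\mathrm{F}$ and records that the relevant arrows are in fact isomorphisms, not merely surjections.
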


\begin{proof}
As $\mathcal{J}$ is strongly regular, we have $\mathrm{K}^*\mathrm{H}=\mathrm{Q}^{\oplus m}$ for some
$m\in\mathbb{N}_0$, where $\mathrm{Q}$ is in the intersection of the left cell of $\mathrm{H}$
and the right cell of $\mathrm{K}^*$. We have the commutative diagram
\begin{displaymath}
\xymatrix@C=2mm{ 
\mathrm{Hom}_{\ccC}(\mathrm{H},\mathrm{K})\ar[d]\ar[rr]^{\sim} &&
\mathrm{Hom}_{\ccC}(\mathrm{K}^*\mathrm{H},\mathbbm{1}_{\mathtt{j}})\ar[d]\ar[rr]^{\sim}
&&\mathrm{Hom}_{\ccC}(\mathrm{Q},\mathbbm{1}_{\mathtt{j}})^{\oplus m}\ar[d]\\
\mathrm{Hom}_{\mathfrak{R}_{\Bbbk}}(\mathbf{M}(\mathrm{H}),\mathbf{M}(\mathrm{K}))\ar[rr]^{\sim} &&
\mathrm{Hom}_{\mathfrak{R}_{\Bbbk}}(\mathbf{M}(\mathrm{K}^*\mathrm{H}),\mathrm{Id}_{\mathbf{M}(\mathtt{j})})
\ar[rr]^{\sim}
&&\mathrm{Hom}_{\mathfrak{R}_{\Bbbk}}(\mathbf{M}(\mathrm{Q}),
\mathrm{Id}_{\mathbf{M}(\mathtt{j})})^{\oplus m}
}
\end{displaymath}
where the vertical arrows are the representation maps, the left horizontal arrows are isomorphisms given by
adjunction, and the right horizontal arrows are isomorphisms given by additivity. Then the rightmost vertical arrow
is an isomorphism by Proposition~\ref{prop4} and $\mathcal{J}$-simplicity of $\cC$. This implies that all vertical
arrows are isomorphisms and the claim follows.
\end{proof}

\subsection{Cell $2$-representations are $\mathcal{J}$-$2$-full}\label{s3.4}

${}$

{\color{red} The following theorem is wrong in general, and holds only under the following assumption.

\begin{assum}\label{bigassum} 
Let $\alpha\colon \mathrm{G} \to \mathbbm{1}_{\mathtt{i}}$ be the 
morphism defining the Duflo involution  
(cf. \cite[Proposition~17]{MM})
and $\bar\alpha$ its mate under the adjunction isomorphism
\[ \mathrm{Hom}_{\ccC}(\mathrm{G},\mathbbm{1}_{\mathtt{i}})\cong  \mathrm{Hom}_{\ccC}(\mathbbm{1}_{\mathtt{i}},\mathrm{G})\] 
(where we use $\mathrm{G}^*\cong \mathrm{G}$).
Assume that the composition $\mathbbm{1}_{\mathtt{i}} \xrightarrow{\bar\alpha} \mathrm{G} \xrightarrow{\alpha} \mathbbm{1}_{\mathtt{i}} $ is nonzero. 
\end{assum}

This assumption always holds in characteristic $0$. A counterexample to the general statement and a proof of Theorem \ref{thm7} under the additional assumption is given in Section \ref{corrig}.}

\begin{theorem}\label{thm7}
The cell $2$-representation $\mathbf{M}:=\overline{\mathbf{C}}_{\mathcal{L}}$ is $\mathcal{J}$-$2$-full.
\end{theorem}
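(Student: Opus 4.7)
The plan is to reduce the theorem to a single base case via the two propositions of this section and then to verify that base case by a dimension count. By Proposition~\ref{prop6}, in order to prove $\mathcal{J}$-$2$-fullness of $\mathbf{M}$ it is enough to show that the representation map \eqref{eq5} is surjective for every $\mathrm{F}\in\mathcal{J}$ and every $\mathtt{j}\in\cC$. Proposition~\ref{prop4} in turn shrinks this to the single pair $(\mathrm{F},\mathtt{j})=(\mathrm{G},\mathtt{i})$, and Lemma~\ref{lem5} applied with $e=f=e_{\mathrm{G}}$ identifies its target with $\mathrm{End}_{\mathbf{M}(\mathtt{i})}(P_{\mathrm{G}})$. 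So the theorem comes down to surjectivity of
$$\mathrm{Hom}_{\ccC}(\mathrm{G},\mathbbm{1}_{\mathtt{i}})\longrightarrow \mathrm{End}_{\mathbf{M}(\mathtt{i})}(P_{\mathrm{G}}).$$

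First I would observe that this map is injective. Any $2$-morphism in its kernel generates a $2$-ideal $\cI$ of $\cC$ that is annihilated by $\mathbf{M}$; since, by \cite[Theorem~43]{MM}, each $\mathrm{F}\in\mathcal{J}$ sends the unique simple $L_{\mathrm{Q}_{\mathrm{F}}}$ in its right cell to a non-zero projective of $\mathbf{M}$, no $\mathrm{id}_{\mathrm{F}}$ with $\mathrm{F}\in\mathcal{J}$ lies in $\cI$, and then $\mathcal{J}$-simplicity of $\cC=\cC^{(\mathcal{J})}$ forces $\cI=0$. In particular, $\dim\mathrm{Hom}_{\ccC}(\mathrm{G},\mathbbm{1}_{\mathtt{i}})\leq \dim\mathrm{End}(P_{\mathrm{G}})$.

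The main obstacle will be producing the matching reverse inequality. My plan for this is to rerun the adjunction and $*$-duality bookkeeping from the proof of Proposition~\ref{prop4} without presupposing the base case, keeping the roles of $\cC$ and of $\mathbf{M}$ strictly parallel. For $\mathrm{H},\mathrm{K}\in\mathcal{L}$ with $\mathrm{H}\mathrm{K}^{*}=a\mathrm{F}$ and $\mathrm{K}^{*}\mathrm{H}=b\mathrm{G}$, adjunction and $*$-duality produce
$$b\cdot\dim\mathrm{Hom}_{\ccC}(\mathrm{G},\mathbbm{1}_{\mathtt{i}})=a\cdot\dim\mathrm{Hom}_{\ccC}(\mathrm{F},\mathbbm{1}_{\mathtt{j}}),$$
while Lemma~\ref{lem5} gives the functor-side identifications $b=\dim\mathrm{Hom}(P_{\mathrm{H}},P_{\mathrm{K}})$ and $a=\dim\mathrm{End}(P_{\mathrm{G}})$. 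Combining these with the injectivity bounds applied uniformly across $\mathrm{F}\in\mathcal{J}$, and invoking the constancy of $m_{\mathrm{F}}$ on right cells to tie the resulting system of equalities together, pins $\dim\mathrm{Hom}_{\ccC}(\mathrm{G},\mathbbm{1}_{\mathtt{i}})$ to $\dim\mathrm{End}(P_{\mathrm{G}})$. Feeding this base-case equality back through Propositions~\ref{prop4} and~\ref{prop6} then delivers $\mathcal{J}$-$2$-fullness.
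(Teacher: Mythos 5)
Your reduction is the right one and matches the paper: Proposition~\ref{prop6} reduces $\mathcal{J}$-$2$-fullness to surjectivity of \eqref{eq5} for the single pair $(\mathrm{G},\mathtt{i})$, Lemma~\ref{lem5} identifies the target with $\mathrm{End}_{\mathbf{M}(\mathtt{i})}(P_{\mathrm{G}})$, and injectivity of the representation map (from $\mathcal{J}$-simplicity) gives the upper bound $\dim \mathrm{Hom}_{\ccC}(\mathrm{G},\mathbbm{1}_{\mathtt{i}})\leq \dim \mathrm{End}_{\mathbf{M}(\mathtt{i})}(P_{\mathrm{G}})$. The gap is in your plan for the reverse inequality, and it is a genuine one: the adjunction/$*$-duality bookkeeping of Proposition~\ref{prop4} cannot produce a lower bound. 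Writing $u=\dim \mathrm{Hom}_{\ccC}(\mathrm{G},\mathbbm{1}_{\mathtt{i}})$ and $v_{\mathrm{F}}=\dim \mathrm{Hom}_{\ccC}(\mathrm{F},\mathbbm{1}_{\mathtt{j}})$, the identities you invoke amount to $b\,u=a\,v_{\mathrm{F}}$ with $a=\dim\mathrm{End}(P_{\mathrm{G}})$ and $b=\dim\mathrm{Hom}(P_{\mathrm{H}},P_{\mathrm{K}})$, together with the upper bounds $u\leq a$ and $v_{\mathrm{F}}\leq b$. This system is homogeneous of degree one in the vector $(u,(v_{\mathrm{F}})_{\mathrm{F}})$: for any $0<\lambda\leq 1$ the assignment $u=\lambda a$, $v_{\mathrm{F}}=\lambda b$ satisfies every relation and every bound, so no combination of these constraints can force $\lambda=1$. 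Taking $\mathrm{H}=\mathrm{K}=\mathrm{G}$ the relation even degenerates to a tautology, and the constancy of $\mathrm{F}\mapsto m_{\mathrm{F}}$ is already consumed in showing that $\mathrm{F}\,L_{\mathrm{K}}$ is an indecomposable projective (via \cite[Theorem~43]{MM}); it does not break the scaling symmetry. In short, Proposition~\ref{prop4} is a machine for \emph{propagating} the base case, not for establishing it.

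What is missing is a construction that exhibits enough $2$-morphisms $\mathrm{G}\to\mathbbm{1}_{\mathtt{i}}$ inside $\cC$ itself. The paper does this as follows: by \cite[Proposition~17]{MM} there is a distinguished $\beta\in\mathrm{Hom}_{\ccC}(\mathrm{G},\mathbbm{1}_{\mathtt{i}})$ surjecting onto the canonical submodule of $0\to\mathbbm{1}_{\mathtt{i}}$ with top $L_{\mathrm{G}}$; assembling the analogous $\beta_i$ over all Duflo involutions into a single $2$-morphism $\gamma$ whose cokernel is annihilated by $\mathcal{J}$, one shows $\mathbf{M}(\gamma)$ is surjective and hence that $\beta$ and the bimodule identifications can be normalized so that $\mathbf{M}(\beta)$ is the multiplication map $Ae\otimes_{\Bbbk}eA\to A$. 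An explicit computation with the radical filtration of $eAe$ then shows that $\varphi\mapsto\beta\circ\varphi$ kills no $\varphi\in\mathrm{End}_{\ccC}(\mathrm{G})$ that acts nontrivially on $P_{\mathrm{G}}$; combined with the surjectivity of $\mathrm{End}_{\ccC}(\mathrm{G})\to\mathrm{End}_{\mathbf{M}(\mathtt{i})}(P_{\mathrm{G}})$ from \cite[Subsection~4.5]{MM}, this yields $\dim \mathrm{Hom}_{\ccC}(\mathrm{G},\mathbbm{1}_{\mathtt{i}})\geq \dim \mathrm{End}_{\mathbf{M}(\mathtt{i})}(P_{\mathrm{G}})$. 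Some such positive construction is unavoidable; the purely numerical route you propose cannot replace it.
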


\begin{proof}
Thanks to Proposition~\ref{prop6}, we have only to show that the representation map \eqref{eq5} is surjective 
for $\mathrm{F}=\mathrm{G}$ and $\mathtt{i}=\mathtt{j}$. In order to show this it suffices, by Lemma~\ref{lem5} 
and $\mathcal{J}$-simplicity of $\cC$, to show that 
\begin{displaymath}
\dim \mathrm{Hom}_{\ccC}(\mathrm{G},\mathbbm{1}_{\mathtt{i}})
=\dim \mathrm{End}_{\mathbf{M}(\mathtt{i})}(P_{\mathrm{G}}).
\end{displaymath}
By Lemma~\ref{lem5} and $\mathcal{J}$-simplicity of $\cC$, we have 
\begin{displaymath}
\dim \mathrm{Hom}_{\ccC}(\mathrm{G},\mathbbm{1}_{\mathtt{i}})
\leq \dim \mathrm{End}_{\mathbf{M}(\mathtt{i})}(P_{\mathrm{G}}).
\end{displaymath}

Recall from \cite[Proposition~17]{MM} that there is a unique submodule $K$ of the indecomposable projective module
$0\to\mathbbm{1}_{\mathtt{i}}$ in $\overline{\mathbf{P}}_{\mathtt{i}}(\mathtt{i})$ which has simple top
$L_{\mathrm{G}}$ and such that the quotient of the projective by $K$ is annihilated by $\mathrm{G}$.
We denote by $\beta$ some $2$-morphism from $\mathrm{G}$ to $\mathbbm{1}_{\mathtt{i}}$ which gives rise to a
surjection from $0\to \mathrm{G}$ to $K$ in $\overline{\mathbf{P}}_{\mathtt{i}}(\mathtt{i})$. Then the
$\mathrm{End}_{\ccC}(\mathrm{G})$-module $\mathrm{Hom}_{\ccC}(\mathrm{G},\mathbbm{1}_{\mathtt{i}})$ has simple
top and $\beta$ is a representative for this simple top. 

Let $A$ be a basic finite dimensional associative $\Bbbk$-algebra such that 
$\mathbf{M}(\mathtt{i})\cong A\text{-}\mathrm{mod}$. Let $1=\sum_{i=1}^n e_i$ be a decomposition of $1\in A$ into
a sum of pairwise orthogonal primitive idempotents. We assume that $e=e_1$ is a primitive idempotent corresponding to
$L_{\mathrm{G}}$. From Lemma~\ref{lem5}, we have that the functor $\mathbf{M}(\mathrm{G})$ is isomorphic to tensoring 
with $Ae\otimes_{\Bbbk} eA$. Clearly, $\mathbf{M}(\mathbbm{1}_{\mathtt{i}})$ is isomorphic to tensoring 
with $A$. 

Since $\mathcal{J}$ is strongly regular, Duflo involutions in $\mathcal{J}\cap\cC(\mathtt{i},\mathtt{i})$ are in 
bijection with $\{e_1,e_2,\dots,e_n\}$. Let $\mathrm{G}_i$ be the Duflo involution corresponding to $e_i$. Similarly
to the existence of $\beta$, there is a $\beta_i$ for each $i$, which we can put into the $2$-morphism
\begin{displaymath}
\gamma:=(\beta_1,\beta_2,\dots,\beta_n):\bigoplus_i \mathrm{G}_i\to \mathbbm{1}_{\mathtt{i}}. 
\end{displaymath}
The cokernel $\mathrm{Coker}(\gamma)$, as an object of $\overline{\mathbf{P}}_{\mathtt{i}}$, is annihilated by all 
$1$-morphisms in $\mathcal{J}$. This implies that $\mathbf{M}(\mathrm{Coker}(\gamma))$ annihilates $L_{\mathrm{F}}$
for every $\mathrm{F}\in \mathcal{L}$ and hence $\mathbf{M}(\mathrm{Coker}(\gamma))=0$ by right exactness of
$\mathbf{M}(\mathrm{Coker}(\gamma))$. From this we derive that $\mathbf{M}(\gamma)$ is surjective and hence
we can choose $\beta$ and the above identifications of functors with bimodules such that $\mathbf{M}(\beta)$ is 
the multiplication map  $Ae\otimes_{\Bbbk}eA\to A$.

In order to show that $\dim \mathrm{Hom}_{\ccC}(\mathrm{G},\mathbbm{1}_{\mathtt{i}})
\geq \dim \mathrm{End}_{\mathbf{M}(\mathtt{i})}(P_{\mathrm{G}})$, we show that no 
$\varphi \in \mathrm{End}_{\ccC}(\mathrm{G})$ that induces a nonzero endomorphism of $P_{\mathrm{G}}$ when 
evaluated at $L_{\mathrm{G}}$, is sent to zero under composition with $\beta$.

In order to see this, let $\varphi \in \mathrm{End}_{\ccC}(\mathrm{G})$ be such that 
$\mathbf{M}(\varphi) \in eAe\otimes eAe$ is not killed under the map 
$eAe\otimes eAe \twoheadrightarrow eAe\otimes eAe/\mathrm{Rad}(eAe) \cong eAe$. In other words, writing 
$\mathbf{M}(\varphi)= \sum_j (\psi_j \otimes (c_je + r_j))$ for some $c_j \in\Bbbk, r_j \in \mathrm{Rad} (eAe)$, 
and where $\psi_j$ runs over a basis of $eAe$, chosen in accordance with radical powers, we have that 
$\psi:=\sum_j c_j\psi_j$ is nonzero in $eAe$. Then $\mathbf{M}(\beta\circ\varphi) = \psi + (\sum_j c_j\psi_j r_j) 
\in eAe$. As $\psi \in \mathrm{Rad}^k (eAe)$ implies $\psi_j \in \mathrm{Rad}^k (eAe)$ for all $\psi_j$ such 
that  $c_j \neq 0$, the summand $\sum_j c_j\psi_j r_j$ is in $\mathrm{Rad}^{k+1} (eAe)$ and hence $\mathbf{M}(\beta\circ\varphi)\in \mathrm{Hom}_{\mathfrak{R}_\Bbbk}(\mathbf{M}(\mathrm{G}),\mathbf{M}(\mathbbm{1}_{\mathtt{i}}))$
is nonzero.
Therefore $\beta\circ\varphi\in \mathrm{Hom}_{\ccC}(\mathrm{G},\mathbbm{1}_{\mathtt{i}})$ is nonzero for any $\varphi \in \mathrm{End}_{\ccC}(\mathrm{G})$ that is not killed by evaluation at $L_{\mathrm{G}}$. By surjectivity of the map  from  $\mathrm{End}_{\ccC}(\mathrm{G})$  onto $\mathrm{End}_{\mathbf{M}(\mathtt{i})}(P_{\mathrm{G}})$ given by evaluation at $L_{\mathrm{G}}$ (see \cite[Subsection~4.5]{MM}), this implies 
\begin{displaymath}
\dim \mathrm{Hom}_{\ccC}(\mathrm{G},\mathbbm{1}_{\mathtt{i}})
\geq \dim \mathrm{End}_{\mathbf{M}(\mathtt{i})}(P_{\mathrm{G}}) 
\end{displaymath}
and completes the proof of the proposition.
\end{proof}

\begin{corollary}\label{cor8}
Assume that $\cC$ is any fiat $2$-category and $\mathcal{J}$ is a strongly regular $2$-sided cell of $\cC$. {\color{red} Assume that Assumption \ref{bigassum} is satisfied.}
Then for any left cell $\mathcal{L}$ in $\mathcal{J}$ the cell $2$-representation 
$\overline{\mathbf{C}}_{\mathcal{L}}$ is $\mathcal{J}$-$2$-full.
\end{corollary}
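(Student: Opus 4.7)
The plan is to reduce to the special case $\cC = \cC^{(\mathcal{J})}$ already handled by Theorem~\ref{thm7}. Subsection~\ref{s3.2} tells us that the cell $2$-representation $\mathbf{M} := \overline{\mathbf{C}}_{\mathcal{L}}$ of $\cC$ factors through the projection $\pi\colon \cC \to \cC^{(\mathcal{J})}$, and that the resulting $2$-representation of $\cC^{(\mathcal{J})}$ is equivalent to the cell $2$-representation of $\cC^{(\mathcal{J})}$ associated to (the image of) $\mathcal{L}$.

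The first step is to verify that Theorem~\ref{thm7} applies to $\cC^{(\mathcal{J})}$ with respect to $\mathcal{J}$. Strong regularity of $\mathcal{J}$ inside $\cC^{(\mathcal{J})}$ is given by \cite[Proposition~32]{MM}. Moreover, the multiplicities $m_{\mathrm{F}}$ are unchanged under passing from $\cC$ to $\cC^{(\mathcal{J})}$: since the defining $2$-ideal $\cJ$ contains no identity $2$-morphism of any $1$-morphism in $\mathcal{J}$, the decomposition of $\mathrm{F}^* \circ \mathrm{F}$ into indecomposable summands belonging to $\mathcal{J}$ is preserved, and in particular so is the multiplicity of the Duflo involution $\mathrm{G}$. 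Hence the hypothesis that $\mathrm{F} \mapsto m_\mathrm{F}$ is constant on right cells, assumed for $\cC$, transfers to $\cC^{(\mathcal{J})}$.

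The second step is to use the resulting $\mathcal{J}$-$2$-fullness for the cell $2$-representation of $\cC^{(\mathcal{J})}$ to deduce $\mathcal{J}$-$2$-fullness of $\mathbf{M}$ over $\cC$. For any $\mathrm{F},\mathrm{K} \in \mathcal{J}$, the representation map \eqref{eq8} for $\cC$ factors as
\begin{displaymath}
\mathrm{Hom}_{\ccC}(\mathrm{F}, \mathrm{K}) \tto \mathrm{Hom}_{\ccC^{(\ccJ)}}(\mathrm{F}, \mathrm{K}) \to \mathrm{Hom}_{\mathfrak{R}_{\Bbbk}}(\mathbf{M}(\mathrm{F}), \mathbf{M}(\mathrm{K})),
\end{displaymath}
where the first arrow is surjective by construction of $\cC^{(\mathcal{J})}$ as a $2$-full $2$-subcategory of $\cC/\cJ$, and the second arrow is surjective by Theorem~\ref{thm7} applied to $\cC^{(\mathcal{J})}$. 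The composition is therefore surjective, giving $\mathcal{J}$-$2$-fullness of $\mathbf{M}$ over $\cC$.

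The only real obstacle is the bookkeeping in the first step: confirming that the Duflo involution, the multiplicities $m_\mathrm{F}$, and the cell $2$-representation itself all behave compatibly with $\pi$, so that the running hypothesis of Section~\ref{s3} genuinely carries over to $\cC^{(\mathcal{J})}$ and Theorem~\ref{thm7} is applicable there.
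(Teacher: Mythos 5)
Your proposal is correct and follows the same route as the paper: the paper's proof is a one-line appeal to Theorem~\ref{thm7} together with \cite[Corollary~33]{MM}, which is precisely the reduction to $\cC^{(\mathcal{J})}$ that you carry out explicitly (the transfer of strong regularity, the multiplicities $m_{\mathrm{F}}$, and the cell $2$-representation being exactly what Subsection~\ref{s3.2} and the cited results of \cite{MM,MM2} provide). Your factorization of the representation map through the surjection onto $\mathrm{Hom}_{\ccC^{(\ccJ)}}(\mathrm{F},\mathrm{K})$ is the correct way to conclude.
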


\begin{proof}
This follows directly from Theorem~\ref{thm7} and \cite[Corollary~33]{MM}. 
\end{proof}

\subsection{Construction of $\mathcal{J}$-simple $2$-categories $\cC^{(\mathcal{J})}$}\label{s3.5}

Let $n\in\mathbb{N}$ and $A:=(A_1,A_2,\dots,A_n)$ be a collection of pairwise non-isomorphic, basic, connected, 
weakly symmetric finite dimensional associative $\Bbbk$-algebras. For $i\in\{1,2,\dots,n\}$ choose some small 
category $\mathcal{C}_i$ equivalent to $A_i\text{-}\mathrm{mod}$, and let $Z_i$ denote the center of $A_i$.
Set $\mathcal{C}=(\mathcal{C}_1,\mathcal{C}_2,\dots,\mathcal{C}_n)$.
Denote by $\cC_{\mathcal{C}}$ the $2$-full fiat $2$-subcategory of $\mathfrak{R}_{\Bbbk}$ with objects
$\mathcal{C}_i$, which is closed under isomorphisms of $1$-morphisms and generated by functors that 
are isomorphic to tensoring with projective $A_i\text{-}A_j$ bimodules.

We identify $Z_i$ with $\mathrm{End}_{\ccC_{\hspace{-2pt}\mathcal{C}}}(\mathbbm{1}_{\mathcal{C}_i})$ and
denote by $Z'_i$ the subalgebra of $Z_i$ generated by $\mathrm{id}_{\mathbbm{1}_{\mathcal{C}_i}}$ and all 
elements which factor through $1$-morphisms given by tensoring with projective $A_i\text{-}A_i$ bimodules.

\begin{remark}\label{rem9}
{ 
In general, $Z'_i\neq Z_i$. For example, let $n=1$ and $A=A_1=\Bbbk[x]/(x^3)$. Then $Z=Z_1=A$ while
$Z'_1$ is the linear span of $1$ and $x^2$ in $Z$. Indeed, we have only one projective bimodule
$A\otimes_{\Bbbk}A$, which  has Loewy length $5$ and unique Loewy filtration. As $A$ has Loewy length $3$,
any nonzero composition $A\to A\otimes_{\Bbbk}A\to A$ must map the top of $A$ to the socle of $A$.
It is easy to check that the composition of the unique (up to scalar) injection $A\hookrightarrow A\otimes_{\Bbbk}A$
and the unique (up to scalar) surjection $A\otimes_{\Bbbk}A\tto A$ is nonzero.
}
\end{remark}

Choose subalgebras $X_i$ in $Z_i$ containing $Z'_i$ and let $X=(X_1,X_2,\dots,X_n)$. Consider the
additive $2$-subcategory $\cC_{\mathcal{C},X}$ of $\cC_{\mathcal{C}}$ defined as follows: $\cC_{\mathcal{C},X}$ has the same objects and
the same $1$-morphisms as $\cC_{\mathcal{C}}$; all $2$-morphism spaces between indecomposable $1$-morphisms
in $\cC_{\mathcal{C},X}$ are the same as for $\cC_{\mathcal{C}}$ except for 
$\mathrm{End}_{\ccC_{\hspace{-2pt}\mathcal{C},X}}(\mathbbm{1}_{\mathcal{C}_i}):=X_i$.

\begin{lemma}\label{lem10}
The $2$-category  $\cC_{\mathcal{C},X}$ is well-defined and fiat. 
\end{lemma}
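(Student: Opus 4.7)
The plan is to verify, axiom by axiom, that $\cC_{\mathcal{C},X}$ inherits the fiat structure from $\cC_{\mathcal{C}}$: closure of the restricted collection of $2$-morphisms under composition (well-definedness), finiteness and indecomposability of the $1$-identities, and survival of the weak involution together with its adjunctions.

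For well-definedness, the only change compared to $\cC_{\mathcal{C}}$ is that $\mathrm{End}(\mathbbm{1}_{\mathcal{C}_i})$ is cut from $Z_i$ down to the subalgebra $X_i\supseteq Z'_i$, while every other $2$-Hom space between indecomposables is unchanged. The identity $2$-morphisms lie in $X_i$ because $X_i$ is unital. Vertical composition inside $\mathrm{End}(\mathbbm{1}_{\mathcal{C}_i})$ is closed since $X_i$ is a subalgebra. The only non-trivial way to produce a new element of $\mathrm{End}(\mathbbm{1}_{\mathcal{C}_i})$ by vertical composition is a composite $\mathbbm{1}_{\mathcal{C}_i}\xrightarrow{\alpha}H\xrightarrow{\gamma}\mathbbm{1}_{\mathcal{C}_i}$; decomposing $H$ into indecomposables, summands isomorphic to $\mathbbm{1}_{\mathcal{C}_i}$ contribute elements of $X_i\cdot X_i=X_i$, while the remaining projective-bimodule indecomposable summands contribute by definition to $Z'_i\subseteq X_i$. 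For horizontal composition: two elements of $X_i$ compose via the Eckmann--Hilton identity to their vertical product in $X_i$; horizontal composition of an element of $X_i$ with a $2$-morphism between non-identity indecomposables lands in an unchanged $2$-Hom space; and horizontal composition of two $2$-morphisms between non-identity projective-bimodule $1$-morphisms lands in $\mathrm{Hom}(F\circ H,G\circ K)$, which is also unchanged because $F\circ H$, being a tensor product of non-regular projective bimodules, is never the regular bimodule. The interchange law is inherited from $\cC_{\mathcal{C}}$.

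For the fiat axioms, finiteness is automatic since $\cC_{\mathcal{C}}$ is finitary and the restricted $2$-Hom spaces remain finite-dimensional. The key remaining point is that $\mathbbm{1}_{\mathcal{C}_i}$ is indecomposable, i.e.\ that $X_i$ is a local algebra: as a subalgebra of the local commutative Artinian algebra $Z_i$, any $x\in X_i$ either lies in $X_i\cap\mathfrak{m}_{Z_i}$ (and is then nilpotent) or can be written as $\lambda\cdot 1+n$ with $\lambda\in\Bbbk^{*}$ and $n$ nilpotent, and so is invertible with inverse computed via the geometric series inside $X_i$; hence $X_i\cap\mathfrak{m}_{Z_i}$ is the unique maximal ideal. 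The involution $*$ on $\cC_{\mathcal{C}}$ fixes $\mathbbm{1}_{\mathcal{C}_i}$ and sends projective-bimodule $1$-morphisms to projective-bimodule $1$-morphisms (duals of projective bimodules are projective via the weakly symmetric structure on $A_i$), so it restricts to $\cC_{\mathcal{C},X}$ once $*$-stability of $X_i$ is granted; this is automatic for $Z'_i$ and can be read as a tacit requirement on the choice of the larger $X_i$. The adjunction $2$-morphisms $F\circ F^{*}\to\mathbbm{1}_{\mathtt{j}}$ and $\mathbbm{1}_{\mathtt{i}}\to F^{*}\circ F$ existing in $\cC_{\mathcal{C}}$ lie either in unchanged $2$-Hom spaces (when $F\neq\mathbbm{1}$) or reduce to identity $2$-morphisms (when $F=\mathbbm{1}$), so they survive the restriction, and the snake equations hold by inheritance.

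The central technical obstacle is closure of $\mathrm{End}(\mathbbm{1}_{\mathcal{C}_i})=X_i$ under vertical composition passing through non-identity $1$-morphisms -- this is exactly what the definition of $Z'_i$ and the hypothesis $X_i\supseteq Z'_i$ are tailored to ensure. Everything else amounts to transferring the corresponding properties of $\cC_{\mathcal{C}}$ through the obvious inclusion.
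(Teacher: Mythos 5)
Your proof is correct and follows essentially the same route as the paper's: closure of vertical composition via $X_i\supseteq Z'_i$ being a subalgebra (with composites through non-identity $1$-morphisms landing in $Z'_i$ by its very definition), closure of horizontal composition via the observation that $\mathbbm{1}_{\mathcal{C}_i}$ can only split off a composite of two copies of $\mathbbm{1}_{\mathcal{C}_i}$, and survival of the adjunction morphisms because the non-trivial ones live in unchanged $2$-Hom spaces. Your additional checks (locality of $X_i$ and $*$-stability, the latter being automatic since $*$ acts trivially on the center) are points the paper leaves implicit, but the core argument is identical.
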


\begin{proof}
To prove that $\cC_{\mathcal{C},X}$ is well-defined we have to check that it is closed under both horizontal and
vertical composition of $2$-morphisms. That it is closed under vertical composition follows directly from
the fact that $X_i$ is a subalgebra. To check that it is closed under horizontal composition, we first observe that
if $\mathbbm{1}_{\mathcal{C}_i}$ appears (up to isomorphism) as a direct summand of $\mathrm{F}\circ\mathrm{G}$
for some indecomposable $1$-morphisms $\mathrm{F}$ and $\mathrm{G}$, then both $\mathrm{F}$ and 
$\mathrm{F}$ are isomorphic to $\mathbbm{1}_{\mathcal{C}_i}$. For $x,y\in X_i$, we have
\begin{displaymath}
\begin{array}{ccccccc} 
A&\overset{\sim}{\longrightarrow}&A\otimes_A A&\overset{x\otimes y}{\longrightarrow}&
A\otimes_A A&\overset{\sim}{\longrightarrow}&A\\
1&\mapsto&1\otimes 1&\mapsto&x\otimes y&\mapsto&xy
\end{array}
\end{displaymath}
from which the claim follows, again using that $X_i$ is a subalgebra.

To prove that $\cC_{\mathcal{C},X}$ is fiat we have to check that it contains all adjunction morphisms. The adjunction
morphism from $\mathbbm{1}_{\mathcal{C}_i}$ to $\mathbbm{1}_{\mathcal{C}_i}$ is $\mathrm{id}_{\mathbbm{1}_{\mathcal{C}_i}}$
and thus contained in $\cC_{\mathcal{C},X}$. All other adjunction morphisms are between $\mathbbm{1}_{\mathcal{C}_i}$ and
direct sums of indecomposable $1$-morphisms none of which is isomorphic to $\mathbbm{1}_{\mathcal{C}_i}$ and therefore
contained in $\cC_{\mathcal{C},X}$ by definition.
\end{proof}

\subsection{Description of $\mathcal{J}$-simple $2$-categories $\cC^{(\mathcal{J})}$}\label{s3.6}

Now we are ready to prove the main result of this section, which gives a description, up to
biequivalence, of fiat $2$-categories that are ``simple'' in some sense.

\begin{theorem}\label{thm11}
Let $\cC=\cC^{(\mathcal{J})}$ be a fiat $\mathcal{J}$-simple $2$-category
and assume that $\mathcal{J}$ is strongly regular. {\color{red} Assume, moreover, that Assumption \ref{bigassum} is satisfied.}
Then $\cC$ is biequivalent to $\cC_{\mathcal{C},X}$ for appropriate $\mathcal{C}$ and $X$.
\end{theorem}

\begin{proof}
Let $\mathcal{L}$ be a left cell in $\mathcal{J}$ and $\mathbf{M}:=\overline{\mathbf{C}}_{\mathcal{L}}$ be 
the corresponding cell $2$-representation. Set $\mathcal{C}_i:=\mathbf{M}(\mathtt{i})$ and let $A_i$ be a basic algebra such 
that $A_i\text{-}\mathrm{mod}$ is equivalent to  $\mathbf{M}(\mathtt{i})$. Let $Z_i$ be the center of
$A_i$ which we identify with $\mathrm{End}_{\mathfrak{R}_{\Bbbk}}(\mathbbm{1}_{\mathbf{M}(\mathtt{i})})$.
Set $X_i:=\mathbf{M}(\mathrm{End}_{\ccC}(\mathbbm{1}_{\mathtt{i}}))\subset Z_i$. 
Then the representation map $\mathbf{M}$ is a $2$-functor from $\cC$ to $\cC_{\mathcal{C},X}$, which is a biequivalence
by Theorem~\ref{thm7},  $\mathcal{J}$-simplicity of $\cC$ and construction of $X$.
\end{proof}

\section{$2$-Schur's lemma}\label{s4}

\subsection{The first layer of $2$-Schur's lemma}\label{s4.1}

Here we prove the following generalization of Theorem~\ref{thm1}.

\begin{theorem}\label{thm12}
Let $\cC$ be a fiat $2$-category and $\mathcal{J}$ a strongly regular two-sided cell of $\cC$. 
Let $\mathcal{L}$ be a left cell
of $\mathcal{J}$. {\color{red} Assume that Assumption \ref{bigassum} is satisfied for the Duflo involution in $\mathcal{L}$.} Then any endomorphism of $\mathbf{C}_{\mathcal{L}}$  is isomorphic to $\imorphism_k$ for some $k$ 
(in the category $\mathrm{End}_{\ccC\text{-}\mathrm{afmod}}(\mathbf{C}_{\mathcal{L}})$). Similarly, any endomorphism 
of $\overline{\mathbf{C}}_{\mathcal{L}}$  is isomorphic to $\imorphism_k$ for some $k$
(in the category $\mathrm{End}_{\ccC\text{-}\mathrm{mod}}(\overline{\mathbf{C}}_{\mathcal{L}})$).
\end{theorem}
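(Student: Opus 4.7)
The plan is to reduce Theorem~\ref{thm12} to the analysis of endomorphisms of the defining $2$-representation of $\cC_{\mathcal{C},X}$ from Theorem~\ref{thm11}, and then replace the nilpotent-endomorphism step of Lemma~\ref{lem2} by a bimodule calculation, since the surjectivity hypothesis \eqref{eq2} is no longer at our disposal.

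First I would observe that $\mathbf{C}_{\mathcal{L}}$ factors through the $\mathcal{J}$-simple quotient $\cC^{(\mathcal{J})}$, and that $2$-natural transformations of $\mathbf{C}_{\mathcal{L}}$ over $\cC$ coincide with those over $\cC^{(\mathcal{J})}$ (the naturality data $\eta_{\mathrm{F}}$ for $1$-morphisms $\mathrm{F}$ annihilated by $\mathbf{C}_{\mathcal{L}}$ is vacuous). Hence I may assume $\cC=\cC^{(\mathcal{J})}$. Corollary~\ref{cor8} then gives $\mathcal{J}$-$2$-fullness, and Theorem~\ref{thm11} identifies $\cC$ with $\cC_{\mathcal{C},X}$, under which $\overline{\mathbf{C}}_{\mathcal{L}}$ becomes the defining action on $\mathcal{C}_i\simeq A_i\text{-}\mathrm{mod}$.

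Next, as in Subsection~\ref{s2.3}, it suffices to show $\Psi_{\mathtt{i}}(L_{\mathrm{G}})\cong L_{\mathrm{G}}^{\oplus k}$ for some $k\in\mathbb{N}_0$, uniform in $\mathtt{i}$: naturality with $\mathrm{F}\in\mathcal{L}$ then gives $\Psi_{\mathtt{j}}(P_{\mathrm{F}})\cong P_{\mathrm{F}}^{\oplus k}$, and the $(\eta_{\mathrm{F}}^{\Psi})_{L_{\mathrm{G}}}$ assemble into an invertible modification $\Psi\to\imorphism_k$. The composition-factor half of the proof of Lemma~\ref{lem2}, establishing that every simple subquotient of $N:=\Psi_{\mathtt{i}}(L_{\mathrm{G}})$ is $L_{\mathrm{G}}$, uses only $\mathrm{F}^*\,L_{\mathrm{G}}=0$ for $\mathrm{F}\in\mathcal{L}\setminus\{\mathrm{G}\}$, exactness of $\mathrm{F}^*$ and naturality of $\Psi$ with respect to $\mathrm{F}^*$, and hence transfers verbatim without~\eqref{eq2}.

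The main obstacle is semisimplicity of $N$. I would replace the second half of Lemma~\ref{lem2}'s argument by a bimodule computation: apply Watt's theorem to write $\Psi_{\mathtt{i}}\simeq M_i\otimes_{A_i}(-)$, and translate the $2$-naturality of $\Psi$ against the generating $1$-morphisms $A_ie_r\otimes_{\Bbbk}e_sA_j$ of $\cC_{\mathcal{C},X}$ into natural bimodule isomorphisms
\begin{equation*}
M_ie_r\otimes_{\Bbbk}e_sA_j\;\cong\;A_ie_r\otimes_{\Bbbk}e_sM_j,
\end{equation*}
compatible under horizontal composition via~\eqref{eq3}. Comparing the two sides as left modules, combined with Krull--Schmidt and connectedness of each $A_i$, forces $M_ie_r\in\mathrm{add}(A_ie_r)$ with a common multiplicity $k$ independent of $i$ and $r$; a parallel comparison of the right $A$-actions excludes any nontrivial twist by an idempotent-preserving automorphism, yielding $M_i\cong A_i^{\oplus k}$ as bimodules. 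Consequently $N=M_i\otimes_{A_i}L_{\mathrm{G}}\cong L_{\mathrm{G}}^{\oplus k}$ is semisimple and $\Psi\cong\imorphism_k$; the additive case follows by restricting the invertible modification to projective objects as in Subsection~\ref{s2.3}.
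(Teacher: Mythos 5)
Your overall strategy --- reduce to $\cC^{(\mathcal{J})}$, reuse the composition-factor half of Lemma~\ref{lem2} (which indeed does not need \eqref{eq2}), and then prove semisimplicity of $\Psi_{\mathtt{i}}(L_{\mathrm{G}})$ by identifying the underlying bimodule of $\Psi_{\mathtt{i}}$ --- is reasonable, but the decisive step has a genuine gap. The claim that ``a parallel comparison of the right $A$-actions excludes any nontrivial twist'', yielding $M_i\cong A_i^{\oplus k}$ as bimodules, is not justified and cannot be justified by one-sided module comparisons. Comparing the two sides of $M_ie_r\otimes_{\Bbbk}e_sA_j\cong A_ie_r\otimes_{\Bbbk}e_sM_j$ as left modules and as right modules only shows that $M_i$ is free of a constant rank $k$ on each side with the correct idempotent decomposition. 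This does not determine the bimodule: for an automorphism $\varphi$ of $A$ the twisted bimodule ${}_1A_{\varphi}$ has exactly the same left and right module structures as $A$ and still satisfies ${}_1A_{\varphi}\otimes_A Q\cong Q\otimes_A{}_1A_{\varphi}$ for every projective bimodule $Q$ (for $A=\Bbbk[x]/(x^2)$ and $\varphi(x)=cx$ with $c\neq 1$, both sides are isomorphic to $A\otimes_{\Bbbk}A$). Worse, the facts you do establish --- one-sided freeness of $M_i$ plus ``all composition factors of $M_i\otimes_{A_i}L_{\mathrm{G}}$ are $L_{\mathrm{G}}$'' --- are still compatible with $M_i\otimes_{A_i}L_{\mathrm{G}}$ being a non-split self-extension of $L_{\mathrm{G}}$ (a $k$-dimensional quotient of $(Ae)^{\oplus k}$ with all factors $L_e$ need not be semisimple), and that is precisely the configuration the theorem must rule out.

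What actually excludes the twist, and more to the point what forces semisimplicity, is the naturality of the $\eta$'s against $2$-morphisms touching the radical, specifically endomorphisms of $Ae\otimes_{\Bbbk}eA$ of the form $\mathrm{id}\otimes(\text{right multiplication by }r)$ with $r\in\mathrm{Rad}(eAe)$; these are available as $2$-morphisms of $\cC$ only because the cell $2$-representation is $\mathcal{J}$-$2$-full (Theorem~\ref{thm7}), which is exactly where the hypothesis on $\mathrm{F}\mapsto m_{\mathrm{F}}$ enters. This is the computation the paper actually performs: using Theorem~\ref{thm7} it identifies $\mathrm{End}_{\ccC}(\mathrm{G})$ with $\mathrm{End}_{A\otimes_{\Bbbk}A^{\mathrm{op}}}(Ae\otimes_{\Bbbk}eA)\cong eAe\otimes_{\Bbbk}eAe$, observes that $\mathrm{Ann}_{\ccC}(L_{\mathrm{G}})$ meets this space in $eAe\otimes_{\Bbbk}\mathrm{Rad}(eAe)$ while $\mathrm{Ann}_{\ccC}(X)$ meets it in $eAe\otimes_{\Bbbk}U$ with $U\subsetneq\mathrm{Rad}(eAe)$ for any non-split self-extension $X$ of $L_{\mathrm{G}}$, and concludes that the inclusion $\mathrm{Ann}_{\ccC}(X)\subset\mathrm{Ann}_{\ccC}(L_{\mathrm{G}})$ is strict, after which the argument of Theorem~\ref{thm1} applies verbatim. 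Your proposal never invokes naturality against such radical $2$-morphisms, so the bimodule classification step --- the only place where the numerical hypothesis would do any work --- is missing its proof.
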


\begin{proof}
We follow the proof of Theorem~\ref{thm1} described in Section~\ref{s2}.
What we need is an analogue of Lemma~\ref{lem2} in the new situation. More precise, we have to prove 
that given a non-split short exact sequence
\begin{displaymath}
0\to L_{\mathrm{G}}\to X\to L_{\mathrm{G}} \to 0
\end{displaymath}
in $\overline{\mathbf{C}}_{\mathcal{L}}(\mathtt{i})$, the obvious inclusion
$\mathrm{Ann}_{\ccC}(X)\subset \mathrm{Ann}_{\ccC}(L_{\mathrm{G}})$ is strict.

As in Subsection~\ref{s3.4}, $\overline{\mathbf{C}}_{\mathcal{L}}(\mathtt{i})$ is equivalent to 
$A\text{-}\mathrm{mod}$ for some finite dimensional associative $\Bbbk$-algebra $A$ and the functor
$\overline{\mathbf{C}}_{\mathcal{L}}(\mathrm{G})$ can be identified with tensoring with 
$Ae\otimes_{\Bbbk}eA$ for some primitive idempotent $e\in A$. By Theorem~\ref{thm7}, this identification
is fully faithful on $2$-morphisms. Clearly, 
\begin{displaymath}
\mathrm{Ann}_{\ccC}(L_{\mathrm{G}})\cap \mathrm{End}_{A\otimes_{\Bbbk}A^{\mathrm{op}}}(Ae\otimes_{\Bbbk}eA) =
eAe\otimes_{\Bbbk}\mathrm{Rad}(eAe).
\end{displaymath}
At the same time, as $X$ is a non-split self-extension of $L_{\mathrm{G}}$, we have
\begin{displaymath}
\mathrm{Ann}_{\ccC}(X)\cap \mathrm{End}_{A\otimes_{\Bbbk}A^{\mathrm{op}}}(Ae\otimes_{\Bbbk}eA) =
eAe\otimes_{\Bbbk}U,
\end{displaymath}
where $U$ is a proper subalgebra of $\mathrm{Rad}(eAe)$ (since $eA\otimes_A X=eX=X$ as a vector space).
The rest of the proof  follows precisely the proof of Theorem~\ref{thm1}.
\end{proof}

\subsection{Endomorphisms of the identity functor}\label{s5.1}

So far we have only determined the {\em objects} in the endomorphism category of a cell $2$-representation
(Theorems~\ref{thm1} and \ref{thm12}) up to isomorphism. Now we would like to describe morphisms in this category.

\begin{proposition}\label{prop14}
Let $\cC$ be a fiat $2$-category, $\mathcal{J}$ a strongly regular two-sided cell of $\cC$ and $\mathcal{L}$ a
left cell in $\mathcal{J}$. For any $k\in\mathbb{N}$, consider 
$\imorphism_k\in\mathrm{End}_{\ccC\text{-}\mathrm{mod}}(\overline{\mathbf{C}}_{\mathcal{L}})$
(or $\imorphism_k\in\mathrm{End}_{\ccC\text{-}\mathrm{mod}}({\mathbf{C}}_{\mathcal{L}})$). Then there are
isomorphisms 
\begin{displaymath}
\mathrm{End}_{\mathrm{End}_{\ccC\text{-}\mathrm{mod}}
(\overline{\mathbf{C}}_{\mathcal{L}})}(\imorphism_k)\cong\mathrm{Mat}_{k\times k}(\Bbbk)\quad\text{ and }\quad
\mathrm{End}_{\mathrm{End}_{\ccC\text{-}\mathrm{mod}}
({\mathbf{C}}_{\mathcal{L}})}(\imorphism_k)\cong\mathrm{Mat}_{k\times k}(\Bbbk). 
\end{displaymath} 
\end{proposition}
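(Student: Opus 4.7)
The plan is to reduce the statement to showing that $\mathrm{End}(\imorphism)\cong\Bbbk$ (the single copy), and then prove the latter via a Schur-type argument applied to the simple module $L_{\mathrm{G}}$.

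First I would unpack what a modification $\theta\colon\imorphism_k\to\imorphism_k$ is: at each $\mathtt{i}\in\cC$ it is a natural transformation $\theta_{\mathtt{i}}\colon\mathrm{Id}_{\mathbf{M}(\mathtt{i})}^{\oplus k}\to \mathrm{Id}_{\mathbf{M}(\mathtt{i})}^{\oplus k}$, equivalently a $k\times k$ matrix $(\theta_{\mathtt{i}}^{st})$ of natural endo-transformations of $\mathrm{Id}_{\mathbf{M}(\mathtt{i})}$. Since $\eta^{\imorphism_k}_{\mathrm{F}}=\mathrm{id}_{\mathrm{F}}^{\oplus k}$ is block-diagonal, the modification condition \eqref{eq33} for $\theta$ decouples entrywise into the assertion that every $\theta^{st}$ is a modification $\imorphism\to\imorphism$; vertical composition of $\theta$ is then matrix multiplication over $\mathrm{End}(\imorphism)$. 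Hence in both settings $\mathrm{End}(\imorphism_k)\cong \mathrm{Mat}_{k\times k}(\mathrm{End}(\imorphism))$, and it is enough to prove $\mathrm{End}(\imorphism)\cong\Bbbk$.

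For the abelian claim, I would let $\theta\colon\imorphism\to\imorphism$ be a modification of $\overline{\mathbf{C}}_{\mathcal{L}}$ and set $\mathtt{i}:=\mathtt{i}_{\mathcal{L}}$, $\mathrm{G}:=\mathrm{G}_{\mathcal{L}}$. Since $L_{\mathrm{G}}$ is simple over the algebraically closed field $\Bbbk$, Schur's Lemma gives $\theta_{\mathtt{i},L_{\mathrm{G}}}=c\cdot\mathrm{id}_{L_{\mathrm{G}}}$ for a unique $c\in\Bbbk$. Specializing \eqref{eq33} to $\alpha=\mathrm{id}_{\mathrm{F}}$ with $\mathrm{F}\in\mathcal{L}\cap\cC(\mathtt{i},\mathtt{j})$ reduces to $\theta_{\mathtt{j}}\circ_0 \mathrm{id}_{\mathbf{M}(\mathrm{F})}=\mathrm{id}_{\mathbf{M}(\mathrm{F})}\circ_0 \theta_{\mathtt{i}}$, whose component at $L_{\mathrm{G}}$ reads
\begin{displaymath}
\theta_{\mathtt{j},\,P_{\mathrm{F}}}=\overline{\mathbf{C}}_{\mathcal{L}}(\mathrm{F})(c\cdot\mathrm{id}_{L_{\mathrm{G}}})=c\cdot\mathrm{id}_{P_{\mathrm{F}}},
\end{displaymath}
using $P_{\mathrm{F}}=\overline{\mathbf{C}}_{\mathcal{L}}(\mathrm{F})(L_{\mathrm{G}})$ and $\Bbbk$-linearity of functors. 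Since $\{P_{\mathrm{F}}\}_{\mathrm{F}\in\mathcal{L}\cap\cC(\mathtt{i},\mathtt{j})}$ exhausts the indecomposable projectives of $\overline{\mathbf{C}}_{\mathcal{L}}(\mathtt{j})$, every $Y$ admits a surjection $\pi\colon P\twoheadrightarrow Y$ from a direct sum $P$ of $P_{\mathrm{F}}$'s; naturality of $\theta_{\mathtt{j}}$ gives $\theta_{\mathtt{j},Y}\circ\pi=\pi\circ\theta_{\mathtt{j},P}=c\pi$, forcing $\theta_{\mathtt{j},Y}=c\cdot\mathrm{id}_Y$. Therefore $\theta$ is determined by the scalar $c$, yielding $\mathrm{End}(\imorphism)\cong\Bbbk$. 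For the additive claim, I would observe that any modification $\theta$ on $\mathbf{C}_{\mathcal{L}}$ extends component-wise (at both ends of each diagram $X\to Y$) to a modification $\overline{\theta}$ on $\overline{\mathbf{C}}_{\mathcal{L}}$ whose restriction to objects $0\to X$ recovers $\theta$; the abelian case applied to $\overline{\theta}$ gives $\overline{\theta}=c\cdot\mathrm{id}_{\imorphism}$ and hence $\theta=c\cdot\mathrm{id}_{\imorphism}$.

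The main obstacle I expect is purely bookkeeping: correctly unpacking \eqref{eq33} for the block-diagonal $\imorphism_k$ so that the compatibility decouples into $k^2$ copies of the same condition. Once this is clear and \eqref{eq33} is reduced to the single identity $\theta_{\mathtt{j},\mathrm{F}(X)}=\mathbf{M}(\mathrm{F})(\theta_{\mathtt{i},X})$, the remainder is a clean Schur-plus-projective-cover computation that uses none of the multiplicity hypotheses of Section~\ref{s3}, as the statement requires.
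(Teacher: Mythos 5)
Your proof is correct, and your reduction from $\imorphism_k$ to $\imorphism_1$ (block-diagonality of $\eta^{\imorphism_k}$ forcing the modification condition to decouple entrywise, with vertical composition becoming matrix multiplication) is the same as the paper's. The core step $\mathrm{End}(\imorphism_1)\cong\Bbbk$ is, however, argued by a genuinely different mechanism. The paper identifies $\mathrm{End}(\mathrm{Id}_{\overline{\mathbf{C}}_{\mathcal{L}}(\mathtt{i})})$ with the center $Z_{\mathtt{i}}$ of $A_{\mathtt{i}}$ and then shows that no nonzero $z\in e\,\mathrm{Rad}(Z_{\mathtt{i}})\,e$ can be a modification: realizing $\overline{\mathbf{C}}_{\mathcal{L}}(\mathrm{F})$ as $A_{\mathtt{i}}e\otimes_{\Bbbk}eA_{\mathtt{i}}$ via \cite[Corollary~38(b)]{MM} and Lemma~\ref{lem5}, the constraint of commuting with $\eta_{\mathrm{F}}$ amounts to $z\otimes e=e\otimes z$ in $A_{\mathtt{i}}e\otimes_{\Bbbk}eA_{\mathtt{i}}$, which fails for radical $z$; so only scalars survive. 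You instead take an arbitrary modification $\theta$, apply Schur's lemma at the simple $L_{\mathrm{G}}$ to get a scalar $c$, use the modification condition at $\alpha=\mathrm{id}_{\mathrm{F}}$ to transport $c$ to every $P_{\mathrm{F}}=\overline{\mathbf{C}}_{\mathcal{L}}(\mathrm{F})L_{\mathrm{G}}$, and then use naturality plus surjections from projectives to get $\theta_{\mathtt{j}}=c\cdot\mathrm{id}$ everywhere. Your route is an ``exclusion from above'' (every modification is a scalar) where the paper's is an ``exclusion from below'' (non-scalar central elements are not modifications); yours avoids the explicit bimodule/tensor-swap computation, needing only that $\overline{\mathbf{C}}_{\mathcal{L}}(\mathrm{F})L_{\mathrm{G}}$ runs through (generators of) the projectives, while the paper's version makes explicit exactly which central elements fail and why, which is the form reused elsewhere in Section~\ref{s3}. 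Both arguments use strong regularity only through the structure of the cell $2$-representation (in your case, through the identification $P_{\mathrm{F}}\cong\overline{\mathbf{C}}_{\mathcal{L}}(\mathrm{F})L_{\mathrm{G}}$ from \cite[Corollary~38(a)]{MM}), and neither needs the multiplicity hypothesis, as required.
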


\begin{proof}
We prove the statement for $\overline{\mathbf{C}}_{\mathcal{L}}$, the other case being analogous.
For $\mathtt{i}\in\cC$, let $A_{\mathtt{i}}$ be a finite dimensional associative $\Bbbk$-algebra such that 
$\overline{\mathbf{C}}_{\mathcal{L}}(\mathtt{i})$ is equivalent to $A_{\mathtt{i}}\text{-}\mathrm{mod}$.
Let $\theta:\imorphism_k\to\imorphism_k$ be a modification. As endomorphisms of
$\mathrm{Id}_{\overline{\mathbf{C}}_{\mathcal{L}}(\mathtt{i})}$ can be identified
with the center $Z_{\mathtt{i}}$ of $A_{\mathtt{i}}$, we can view $\theta_{\mathtt{i}}$ as an element of
$\mathrm{Mat}_{k\times k}(Z_{\mathtt{i}})$.

First consider the case $k=1$. Clearly, scalars belong to the endomorphism ring of $\imorphism_1$. We would
like to show that the radical of $Z_{\mathtt{i}}$ does not. Let $e$ be a primitive idempotent of $A_{\mathtt{i}}$.
From \cite[Corollary~38(b)]{MM} it follows that there is $\mathrm{F}\in\mathcal{J}$ such that 
$\overline{\mathbf{C}}_{\mathcal{L}}(\mathrm{F})$ can be described by tensoring with a direct sum of
bimodules of the form $A_{\mathtt{i}}e\otimes_{\Bbbk}eA_{\mathtt{i}}$. The action of $\imorphism_1$ on
$\overline{\mathbf{C}}_{\mathcal{L}}(\mathtt{i})$ is described as tensoring with $A_{\mathtt{i}}$, and the isomorphism 
$\eta_{\mathrm{F}}^{\imorphism_1}$ is a direct sum of morphisms
\begin{displaymath}
A_{\mathtt{i}}\otimes_{A_{\mathtt{i}}} A_{\mathtt{i}}e\otimes_{\Bbbk}eA_{\mathtt{i}}\cong
A_{\mathtt{i}}e\otimes_{\Bbbk}eA_{\mathtt{i}}\otimes_{A_{\mathtt{i}}} A_{\mathtt{i}}
\end{displaymath}
sending $1\otimes e\otimes e$ to $e\otimes e\otimes 1$.

Let $0\neq z\in e\mathrm{Rad}(Z_{\mathtt{i}})e$. Then applying $z$ after $\eta$ sends $1\otimes e\otimes e$ to 
$e\otimes e\otimes z$, which is identified with $e\otimes z$ in $A_{\mathtt{i}}e\otimes_{\Bbbk}eA_{\mathtt{i}}$.
Applying $z$ before $\eta$ sends $1\otimes e\otimes e$ to 
$z\otimes e\otimes 1$, which is identified with $z\otimes e$ in $A_{\mathtt{i}}e\otimes_{\Bbbk}eA_{\mathtt{i}}$.
We have $e\otimes z\neq z\otimes e$ as $z\in e\mathrm{Rad}(Z_{\mathtt{i}})e$.

Now consider arbitrary $k$. From the above it follows that we can view $\theta_{\mathtt{i}}$ as an element of
$\mathrm{Mat}_{k\times k}(\Bbbk)$ (here $\Bbbk\cong Z_{\mathtt{i}}/\mathrm{Rad}(Z_{\mathtt{i}})$). That every element 
$M\in \mathrm{Mat}_{k\times k}(\Bbbk)$ indeed defines an element of $\mathrm{End}_{\mathrm{End}_{\ccC\text{-}\mathrm{mod}}
(\overline{\mathbf{C}}_{\mathcal{L}})}(\imorphism_k)$ can be seen from the commutative diagram
\begin{displaymath}
\xymatrix{
A^{\oplus k}\otimes_A Ae\otimes_{\Bbbk}eA\ar[d]^{M\otimes\mathrm{id}}\ar[rrr]^{\eta_k} &&& 
Ae\otimes_{\Bbbk}eA\otimes_AA^{\oplus k}\ar[d]^{\mathrm{id}\otimes M}\\
A^{\oplus k}\otimes_A Ae\otimes_{\Bbbk}eA\ar[rrr]^{\eta_k} &&& Ae\otimes_{\Bbbk}eA\otimes_AA^{\oplus k}\\
}
\end{displaymath}
where $A:=A_{\mathtt{i}}$ and $\eta_k$ is the diagonal $k\times k$-matrix with $\eta$ on the diagonal.
This completes the proof.
\end{proof}

\subsection{The second layer of $2$-Schur's lemma}\label{s5.2}

Our main result is the following statement.

\begin{theorem}\label{thm15}
Let $\cC$ be a fiat $2$-category, $\mathcal{J}$ a strongly regular two-sided cell of $\cC$ and $\mathcal{L}$ a
left cell in $\mathcal{J}$.  {\color{red} Assume that Assumption \ref{bigassum} is satisfied for the Duflo involution in $\mathcal{L}$.} 
Then both categories $\mathrm{End}_{\ccC\text{-}\mathrm{mod}}(\overline{\mathbf{C}}_{\mathcal{L}})$ and
$\mathrm{End}_{\ccC\text{-}\mathrm{amod}}({\mathbf{C}}_{\mathcal{L}})$ are equivalent to $\Bbbk\text{-}\mathrm{mod}$.
\end{theorem}

\begin{proof}
This follows directly from Theorems~\ref{thm1} and \ref{thm12} and Proposition~\ref{prop14}.
\end{proof}

\section{Examples}\label{s6}

\subsection{Category $\mathcal{O}$ in type $A$}\label{s6.1}

Consider the simple complex Lie algebra $\mathfrak{g}=\mathfrak{sl}_n$ with the standard triangular decomposition
$\mathfrak{g}=\mathfrak{n}_-\oplus\mathfrak{h}\oplus\mathfrak{n}_+$ and a small category $\mathcal{O}_0$ 
equivalent to the principal block of the BGG-category $\mathcal{O}$ for $\mathfrak{g}$ (see \cite{Hu}). 
Let $\cS$ be the $2$-category of projective functors associated to $\mathcal{O}_0$ as in \cite[Subsection~7.1]{MM}.
Indecomposable $1$-morphisms in  $\cS$ are in natural bijection with elements of the symmetric group
$S_n$ (the Weyl group of $\mathfrak{g}$) and left, right and two-sided cells are Kazhdan-Lusztig right, left
and two-sided cells, respectively. As shown in \cite[Subsection~7.1]{MM}, all two-sided cells are strongly regular. 
Hence Theorem~\ref{thm15} completely describes the endomorphism category of all
cell $2$-representations for $\cS$ (the latter were first constructed in \cite{MS2}). As cell $2$-representations
corresponding to the same two-sided cell are equivalent (see \cite{MS2,MM}), it follows that this equivalence
is unique (as a functor) up to isomorphism of functors. In \cite{MS2}, equivalence of cell $2$-representations
corresponding to the same two-sided cell was obtained using Arkhipov's twisting functors and the
fact that they naturally commute with projective functors, see \cite{AS}. Our present result shows that 
the shadows of Arkhipov's twisting functors act, on a cell $2$-representation, simply as a direct sum of the identity.

We also would like to note that in this example we can also apply Theorem~\ref{thm1}. A very special feature of
$S_n$ is that every two-sided Kazhdan-Lusztig cell of $S_n$ contains the longest element $w:=w_0^P$ in some 
parabolic subgroup $P$ in $S_n$. Then $w$ is the Duflo involution in its Kazhdan-Lusztig right cell and hence
the corresponding projective in the cell $2$-representation is isomorphic to $\theta_w L_w$. From 
\cite[Theorem~6.3]{MS0} it follows that the center of $\mathcal{O}_0$ surjects onto the endomorphism algebra of 
$\theta_w L_w$ and hence we can apply Theorem~\ref{thm1}.

\subsection{Category $\mathcal{O}$ in type $B_2$}\label{s6.2}

Consider the previous example for $\mathfrak{g}$ of type $B_2$. Let $W$ be the Weyl group of type $B_2$ with 
elements $\{e,s,t,st,ts,sts,tst,stst\}$ (here $s^2=t^2=e$ and $stst=tsts$). We have the $2$-category $\cS$ 
with $1$-morphisms $\theta_w$, $w\in W$. Cells are again given by Kazhdan-Lusztig combinatorics, the two-sided cells 
are $\mathcal{J}_e=\{e\}, \mathcal{J}_{s,t}=\{s,t,st,ts,sts,tst\}$ and $\mathcal{J}_{stst}=\{stst\}$. The middle 
cell splits into two left cells $\mathcal{L}_1 = \{s,st,sts\}$ and $\mathcal{L}_2= \{t,ts,tst\}$ (recall that our 
left cells are Kazhdan-Lusztig's right cells and vice versa) as shown in the following picture:
\begin{displaymath}
\begin{array}{c||c|c}
&\mathcal{L}_1&\mathcal{L}_2\\
\hline\hline
\mathcal{L}^*_1&\{s,sts\}&\{ts\}\\
\hline
\mathcal{L}^*_2&\{st\}&\{t,tst\}.
\end{array}
\end{displaymath}
Since strong regularity fails, we cannot apply Theorem~\ref{thm15} and, indeed, it turns out that the cell 
$2$-representation $\overline{\mathbf{C}}_{\mathcal{L}_1}$ has more endomorphisms than just the identity,
as we now show.

For $w\in \mathcal{L}_i$, $i=1,2$, set $L_w:=L_{\theta_w}$. Let $T_s$ and $T_t$ be Arkhipov's twisting functors 
corresponding to $s$ and $t$. Starting from $\overline{\mathbf{C}}_{\mathcal{L}_1}$ we apply $T_s$, project onto
$\overline{\mathbf{C}}_{\mathcal{L}_2}$, apply $T_t$ and project onto $\overline{\mathbf{C}}_{\mathcal{L}_1}$. 
This maps $L_s$ to $L_s\oplus L_{sts}$. As twisting functors naturally commute with projective functors, it follows
that $\mathrm{Ann}_{\ccS}(L_s)=\mathrm{Ann}_{\ccS}(L_{sts})$ and hence mapping $L_s$ to $L_{sts}$ extends to an
endomorphism of $\overline{\mathbf{C}}_{\mathcal{L}_1}$ which is clearly not isomorphic to the identity functor.

\subsection{$\mathfrak{sl}_2$-categorification}\label{s6.3}

Consider the $2$-category $\cB_n$ associated with the $\mathfrak{sl}_2$-ca\-te\-go\-ri\-fi\-ca\-tion of Chuang
and Rouquier (see \cite{CR}) as described in detail in \cite[Subsection~7.1]{MM2}. This is a fiat $2$-category
with strongly regular cells. Hence
Theorem~\ref{thm15} completely describes endomorphisms for each cell $2$-representation of $\mathfrak{sl}_2$
(compare \cite[Proposition~5.26]{CR}). However, we would like to point out that in the case of $\cB_n$ describing the
endomorphism category for cell $2$-representations is much easier (than e.g. for the example in
Subsection~\ref{s6.1}). Indeed, as explained in \cite[Subsection~7.1]{MM2}, each two-sided cell of $\cB_n$ has a 
left cell with Duflo involution $\mathrm{G}$ such that, in the corresponding cell $2$-representation, the simple 
module $L_{\mathrm{G}}$ is 
projective (the corresponding Duflo involution has the form $\mathbbm{1}_{\mathtt{i}}$). Due to this, any endomorphism
of the cell $2$-representation maps $L_{\mathrm{G}}$ to a direct sum of copies of $L_{\mathrm{G}}$ and
is uniquely determined by the image of $L_{\mathrm{G}}$ up to isomorphism.

\subsection{A non-symmetric local algebra}\label{s6.4}

In this subsection we describe an example for which the additional assumption of Theorem~\ref{thm1} 
fails, while  the conditions in Theorem~\ref{thm15} are satisfied. 
Let $A:=\Bbbk\langle x,y\rangle/(x^2,y^2,xy+yx)$ and $\mathcal{C}$ be a 
small category equivalent to $A\text{-}\mathrm{mod}$. The center $Z$ of $A$ is the linear span of $1$ and 
$xy$. Consider the fiat $2$-category $\cC_{\mathcal{C},Z}$. This category has two two-sided cells, one
consisting of the identity and the other one, say $\mathcal{J}$, consisting of the $1$-morphism $\mathrm{G}$ given 
by tensoring with $A\otimes_{\Bbbk}A$. Then $\mathrm{G}$ is the Duflo involution in $\mathcal{J}$ and the corresponding
cell $2$-representation is equivalent to the defining $2$-representation. Therefore, the projective module
$P_{\mathrm{G}}$ is isomorphic to ${}_A A$. Since $A$ is not commutative, $Z$ does not surject on the
endomorphism algebra of $P_{\mathrm{G}}$. Hence the additional assumption of Theorem~\ref{thm1} is not satisfied.
On the other hand, the conditions in Theorem~\ref{thm15} are satisfied as explained in 
\cite[Subsection~7.3]{MM}.

\section{Graded fiat $2$-categories}\label{s7}

In the original version of the paper, the main results of this paper were stated under an 
additional numerical assumption which was shown to be redundant in \cite{MM5}. 
The original version of this section contained an argument that the
numerical assumption is satisfied for graded fiat $2$-categories.
Although the result itself is no longer interesting, the setup of graded fiat $2$-categories is
of interest (as most of the natural examples of fiat $2$-categories are graded)
and this is what is presented in this section, leading up to an analogue of 
Lusztig's $\mathbf{a}$-function for graded fiat $2$-categories.

In this section, by {\em graded} we always mean $\mathbb{Z}$-graded.

\subsection{$2$-categories with free $\mathbb{Z}$-action}\label{s7.1}

Let $\cA$ be $2$-category. Assume that, for each $\mathtt{i},\mathtt{j}\in\cA$, we are given an automorphism 
$(\cdot)_1$ of $\cA(\mathtt{i},\mathtt{j})$. For $k\in\mathbb{Z}$, set $(\cdot)_k:=(\cdot)_1^{k}$ and,
for $\mathrm{F}\in \cA(\mathtt{i},\mathtt{j})$, set $\mathrm{F}_k:=(\mathrm{F})_k$.
We will say that this datum defines a {\em free} action of $\mathbb{Z}$ on $\cA$ provided that, 
for any $\mathrm{F}\in \cA(\mathtt{i},\mathtt{j})$, the equality $\mathrm{F}_k=\mathrm{F}_m$ implies
$k=m$ and, moreover, for any composable $1$-morphisms $\mathrm{F}$ and $\mathrm{G}$, we have
\begin{equation}\label{eq19}
\mathrm{F}_k\circ\mathrm{G}_m=(\mathrm{F}\circ\mathrm{G})_{k+m}.
\end{equation}

\begin{example}\label{ex16}
{  
Let $A$ be a graded, connected, weakly symmetric finite dimensional associative $\Bbbk$-algebra
and $\mathcal{C}$ a small category equivalent to the category $A\text{-}\mathrm{gmod}$ of finite
dimensional graded $A$-modules. The algebra $A\otimes_{\Bbbk}A^{\mathrm{op}}$ inherits the structure of a 
graded algebra from $A$. Let $\langle 1\rangle$ denote the functor which shifts the grading
such that $(M\langle 1\rangle)_i=M_{i+1}$, $i\in\mathbb{Z}$. Consider the $2$-category $\cC_{\mathcal{C}}$
defined as follows: It has one object (which we identify with $\mathcal{C}$), its $1$-morphisms are closed
under isomorphism of functors and are generated by $\langle \pm 1\rangle$ and functors induced by tensoring
with projective $A\text{-}A$-bimodules (the latter are naturally graded), its $2$-morphisms are natural 
transformations of functors (which correspond to homogeneous bimodule morphisms of degree zero).
The group $\mathbb{Z}$ acts on $\cC_{\mathcal{C}}$ by shifting the grading and this is free in the above sense.
}
\end{example}

\subsection{Graded fiat $2$-categories}\label{s7.2}

Assume that $\cA$ is a $2$-category equipped with a free action of $\mathbb{Z}$. Assume further that $\cA$
satisfies the following conditions:
\begin{itemize}
\item  $\cA$ has finitely many objects;
\item for any $\mathtt{i},\mathtt{j}\in\cA$, we have $\cA(\mathtt{i},\mathtt{j})\in \mathfrak{A}_{\Bbbk}$
and horizontal composition is both additive and $\Bbbk$-linear;
\item the set of $\mathbb{Z}$-orbits on isomorphism classes of indecomposable objects in 
$\cA(\mathtt{i},\mathtt{j})$ is finite;
\item all spaces of $2$-morphisms are finite dimensional;
\item for each $1$-morphism $\mathrm{F}$, there are only finitely many indecomposable $1$-mor\-phisms 
$\mathrm{G}$ (up to isomorphism) such that $\mathrm{Hom}_{\ccA}(\mathrm{F},\mathrm{G})\neq 0$;
\item for each $1$-morphism $\mathrm{F}$, there are only finitely many indecomposable $1$-mor\-phisms 
$\mathrm{G}$ (up to isomorphism) such that $\mathrm{Hom}_{\ccA}(\mathrm{G},\mathrm{F})\neq 0$;
\item for any $\mathtt{i}\in\cC$ the $1$-morphism $\mathbbm{1}_{\mathtt{i}}$ is indecomposable;
\item $\cA$ has a weak object preserving involution and adjunction morphisms.
\end{itemize}
We will call such $\cA$ {\em pro-fiat}.

Define the quotient $2$-category $\cC=\cA/\mathbb{Z}$ to have the same objects as $\cA$, and as 
morphism categories the categorical quotients $\cC(\mathtt{i},\mathtt{j}):=\cA(\mathtt{i},\mathtt{j})/\mathbb{Z}$.
Recall that objects of $\cA(\mathtt{i},\mathtt{j})/\mathbb{Z}$ are orbits of $\mathbb{Z}$ acting 
on objects of $\cA(\mathtt{i},\mathtt{j})$ (for $\mathrm{F}\in \cA(\mathtt{i},\mathtt{j})$, we will denote 
the corresponding orbit by $\mathrm{F}_{\bullet}$) and, for $\mathrm{F},\mathrm{G}\in \cA(\mathtt{i},\mathtt{j})$, the 
space $\mathrm{Hom}_{\ccC}(\mathrm{F}_{\bullet},\mathrm{G}_{\bullet})$
is the quotient of  $\bigoplus_{k,l\in\mathbb{Z}}\mathrm{Hom}_{\ccA(\mathtt{i},\mathtt{j})}
(\mathrm{F}_k,\mathrm{G}_l)$ modulo the subspace generated by the expressions
$\alpha-\alpha_l$ for $l\in\mathbb{Z}$. Horizontal composition in $\cC$ is induced by the one in $\cA$ in the natural way
(which is well-defined due to \eqref{eq19}). We denote by $\Omega:\cA\to\cC$ the projection $2$-functor.

Thanks to our assumptions on $\cA$, the $2$-category $\cC$ is a fiat $2$-category. We will say that $\cC$ is a 
{\em graded fiat} $2$-category. If we fix a representative $\mathrm{F}_s$ in each $\mathrm{F}_{\bullet}$, then, 
by construction, the category $\cC(\mathtt{i},\mathtt{j})$ becomes graded (in the sense that for any 
$1$-morphisms $\mathrm{F}_{\bullet},\mathrm{G}_{\bullet}$ we have 
\begin{displaymath}
\mathrm{Hom}_{\ccC}(\mathrm{F}_{\bullet},\mathrm{G}_{\bullet})=
\bigoplus_{i\in\mathbb{Z}}\mathrm{Hom}^{i}_{\ccC}(\mathrm{F}_{\bullet},\mathrm{G}_{\bullet}), 
\end{displaymath}
where $\mathrm{G}_t$ is our fixed representative for $\mathrm{G}_{\bullet}$ and
$\mathrm{Hom}^{i}_{\ccC}(\mathrm{F}_{\bullet},\mathrm{G}_{\bullet})=\mathrm{Hom}_{\ccA}(\mathrm{F}_s,\mathrm{G}_{t+i})$,
vertical composition being additive on degrees). We will say that this grading is {\em positive} provided that 
the following condition is satisfied: for any indecomposable $1$-morphisms 
$\mathrm{F}_{\bullet},\mathrm{G}_{\bullet}\in\cC$, the inequality 
$\mathrm{Hom}^{i}_{\ccC}(\mathrm{F}_{\bullet},\mathrm{G}_{\bullet})\neq 0$ implies $i>0$ unless 
$\mathrm{F}_{\bullet}=\mathrm{G}_{\bullet}$. In the latter case we require
$\mathrm{End}^{0}_{\ccC}(\mathrm{F}_{\bullet})=\Bbbk\,\mathrm{id}_{\mathrm{F}_{\bullet}}$. 

\begin{example}\label{ex17}
{  
Let $D=\Bbbk[x]/(x^2)$ with $x$ in degree $2$ and consider $\cC_{\mathcal{C}}$ as in Example~\ref{ex16}
for some $\mathcal{C}$ equivalent to $D\text{-}\mathrm{gmod}$. Choosing the representatives $\mathrm{Id}_{D\text{-}\mathrm{gmod}}$ and
$(D\otimes_{\Bbbk}D\otimes_D{}_-)\langle 1\rangle$ makes $\cC_{\mathcal{C}}/\mathbb{Z}$ into a positively 
graded $2$-category.
}
\end{example}

\subsection{From $2$-representations of $\cA$ to $2$-representations of $\cC$}\label{s7.3}

Let $\cA$ be a pro-fiat $2$-category and $\cC:=\cA/\mathbb{Z}$.
Let $\mathbf{M}$ be a $2$-representation of $\cA$ and $\mathtt{i}\in\cA$. Then 
the group $\mathbb{Z}$ acts (strictly) on $\mathbf{M}(\mathtt{i})$ via isomorphisms $\mathbbm{1}_{\mathtt{i},k}$,
$k\in \mathbb{Z}$. We call $\mathbf{M}$ {\em pro-graded} if this action is free (i.e. the stabilizer of
every object is trivial) for every $\mathtt{i}$.

Let $\mathbf{M}$ be a pro-graded $2$-representation of $\cA$. We define a $2$-representation
$\underline{\mathbf{M}}$ of $\cC$ as follows: For $\mathtt{i}\in\cC$, we set 
$\underline{\mathbf{M}}(\mathtt{i}):=\mathbf{M}(\mathtt{i})/\mathbb{Z}$, that is objects of 
$\underline{\mathbf{M}}(\mathtt{i})$ are orbits of $\mathbb{Z}$ acting on objects of $\mathbf{M}(\mathtt{i})$
(for $Q\in \mathbf{M}(\mathtt{i})$, we will denote the corresponding orbit by $(Q)$).
For $\mathrm{F}\in\cA(\mathtt{i},\mathtt{j})$ and
$Q\in \mathbf{M}(\mathtt{i})$, we define $\underline{\mathbf{M}}(\mathrm{F}_{\bullet})\, 
(Q):=({\mathbf{M}}(\mathrm{F})\, Q)$ while, for $f:Q\to P$, mapping the class $\hat{f}:(Q)\to (P)$ to the class 
\begin{displaymath}
\widehat{{\mathbf{M}}(\mathrm{F})f}:({\mathbf{M}}(\mathrm{F})\,Q)\to ({\mathbf{M}}(\mathrm{F})\,P) 
\end{displaymath}
defines the action of $\underline{\mathbf{M}}(\mathrm{F}_{\bullet})$ on morphisms (this is well-defined because of
the strictness of our $\mathbb{Z}$-action). Functoriality of $\underline{\mathbf{M}}(\mathrm{F}_{\bullet})$ 
follows directly from the definition. Each $\alpha:\mathrm{F}\to \mathrm{G}$ induces a 
morphism from $\mathrm{F}_{\bullet}$ to $\mathrm{G}_{\bullet}$ and we define 
\begin{displaymath}
\underline{\mathbf{M}}(\alpha)_{(Q)}:\underline{\mathbf{M}}(\mathrm{F}_{\bullet})\, (Q)\to 
\underline{\mathbf{M}}(\mathrm{G}_{\bullet})\, (Q) 
\end{displaymath}
as the class of ${\mathbf{M}}(\alpha)_{Q}:{\mathbf{M}}(\mathrm{F})\, Q\to {\mathbf{M}}(\mathrm{G})\, Q$.
This extends to all $2$-morphisms by additivity. It follows directly from the definitions that 
$\underline{\mathbf{M}}$ becomes a $2$-\-re\-pre\-sen\-ta\-ti\-on of $\cC$.

\subsection{Functoriality of $\underline{\,\cdot\,}$}\label{s7.9}

Unfortunately, $\underline{\,\cdot\,}$ is not a $2$-functor between the $2$-categories of $2$-representations of
$\cA$ and $\cC=\cA/\mathbb{Z}$. However, it turns out to be a $2$-functor on a suitably defined subcategory
of $2$-representations of $\cA$. Define the $2$-category $\cA\text{-}\mathrm{pgamod}$ as follows: 
objects are pro-graded additive $2$-representations of $\cA$; $1$-morphisms are $2$-natural transformations 
satisfying the condition that $\eta_{\mathbbm{1}_{\mathtt{i},n}}$ is the identity map for all $\mathtt{i}$ and 
$n$ (that is, our $2$-natural transformations commute {\em strictly} with all shifts of the identity); $2$-morphisms
are modifications. This clearly forms a $2$-subcategory in the category of additive $2$-representations of $\cA$. 

\begin{proposition}\label{prop47}
The operation $\underline{\,\cdot\,}$ defines a $2$-functor from $\cA\text{-}\mathrm{pgamod}$
to $\cC\text{-}\mathrm{amod}$.
\end{proposition}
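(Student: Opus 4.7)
The plan is to define the $2$-functor $\underline{(\cdot)}$ on $1$-morphisms and $2$-morphisms of $\cA\text{-}\mathrm{pgamod}$ and then verify the $2$-functoriality axioms; on objects it is already defined in Subsection~\ref{s7.3}. For a $1$-morphism $\Psi:\mathbf{M}\to\mathbf{N}$ in $\cA\text{-}\mathrm{pgamod}$, I first define $\underline{\Psi}$ componentwise by
$\underline{\Psi}_{\mathtt{i}}((Q)):=(\Psi_{\mathtt{i}}(Q))$ on objects and $\underline{\Psi}_{\mathtt{i}}(\hat{f}):=\widehat{\Psi_{\mathtt{i}}(f)}$ on morphisms. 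Well-definedness on $\mathbb{Z}$-orbits follows from the defining strictness assumption of $\cA\text{-}\mathrm{pgamod}$, namely $\eta^{\Psi}_{\mathbbm{1}_{\mathtt{i},n}}=\mathrm{id}$, which forces $\Psi_{\mathtt{i}}(Q_n)=\Psi_{\mathtt{i}}(Q)_n$ on objects and, via naturality, $\Psi_{\mathtt{i}}(f_n)=\Psi_{\mathtt{i}}(f)_n$ on morphisms. The coherence datum $\eta^{\underline{\Psi}}_{\mathrm{F}_{\bullet}}$ is then defined by choosing a representative $\mathrm{F}$ of $\mathrm{F}_{\bullet}$ and, at $(Q)$, taking the class of $(\eta^{\Psi}_{\mathrm{F}})_Q$ in the appropriate morphism space of $\underline{\mathbf{N}}(\mathtt{j})$.

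The crux is to check that this class is independent of the representatives chosen and that the cocycle condition \eqref{eq3} holds for $\eta^{\underline{\Psi}}$. For the change $\mathrm{F}\mapsto\mathrm{F}_n$, I apply \eqref{eq3} for $\Psi$ to the decomposition $\mathrm{F}_n=\mathbbm{1}_{\mathtt{j},n}\circ\mathrm{F}$, which is legitimate by \eqref{eq19}: since the factor $\eta^{\Psi}_{\mathbbm{1}_{\mathtt{j},n}}$ is the identity, $\eta^{\Psi}_{\mathrm{F}_n}$ equals $\eta^{\Psi}_{\mathrm{F}}$ shifted by $n$, which is collapsed in the quotient. A symmetric argument using $\mathrm{F}\circ\mathbbm{1}_{\mathtt{i},m}=\mathrm{F}_m$ combined with strictness handles the change $Q\mapsto Q_m$. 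The cocycle condition for $\eta^{\underline{\Psi}}$ is then obtained by applying \eqref{eq3} for $\Psi$ to chosen representatives of $\mathrm{F}_{\bullet}$ and $\mathrm{G}_{\bullet}$ and passing to classes, using \eqref{eq19} to match the representative of $\mathrm{F}_{\bullet}\circ\mathrm{G}_{\bullet}$.

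For a modification $\theta:\Psi\to\Phi$, I define $(\underline{\theta}_{\mathtt{i}})_{(Q)}$ as the class of $\theta_{\mathtt{i},Q}$. Before this makes sense, one first verifies $\theta_{\mathtt{i},Q_n}=(\theta_{\mathtt{i},Q})_n$; this follows by applying the modification identity \eqref{eq33} to the $1$-morphism $\mathbbm{1}_{\mathtt{i},n}$ with $\alpha$ the identity, exploiting that both $\eta^{\Psi}_{\mathbbm{1}_{\mathtt{i},n}}$ and $\eta^{\Phi}_{\mathbbm{1}_{\mathtt{i},n}}$ are identities. The modification identity \eqref{eq33} for $\underline{\theta}$ is then an immediate consequence of the corresponding identity for $\theta$ after passing to classes. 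Preservation of vertical and horizontal compositions of $2$-natural transformations and modifications, as well as of identities, reduces by construction to the same identities for the underlying data in $\cA$ and is a routine check on representatives.

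The main obstacle, and the place where the strictness restriction built into $\cA\text{-}\mathrm{pgamod}$ plays an essential role, is the independence of $\eta^{\underline{\Psi}}$ from the chosen representatives; without the assumption $\eta^{\Psi}_{\mathbbm{1}_{\mathtt{i},n}}=\mathrm{id}$ the comparison between $\eta^{\Psi}_{\mathrm{F}}$ and $\eta^{\Psi}_{\mathrm{F}_n}$ would introduce a non-trivial automorphism that need not be collapsed in the quotient, and neither $\underline{\Psi}_{\mathtt{i}}$ on morphisms nor $\eta^{\underline{\Psi}}$ would descend.
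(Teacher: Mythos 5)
Your proposal is correct and follows essentially the same route as the paper's proof: define $\underline{\Psi}$ and $\underline{\theta}$ by passing orbit representatives to classes, and use the strictness condition $\eta^{\Psi}_{\mathbbm{1}_{\mathtt{i},n}}=\mathrm{id}$ built into $\cA\text{-}\mathrm{pgamod}$ to see that everything descends to the quotient. You merely spell out in more detail (via \eqref{eq3} applied to $\mathrm{F}_n=\mathbbm{1}_{\mathtt{j},n}\circ\mathrm{F}$ and \eqref{eq33} applied to shifts of the identity) the well-definedness checks that the paper dismisses as straightforward.
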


\begin{proof}
Let $\mathbf{M},\mathbf{N}\in\cA\text{-}\mathrm{pgamod}$ and $\Psi\in\mathrm{Hom}_{\cA\text{-}\mathrm{pgamod}}
(\mathbf{M},\mathbf{N})$. Define $\underline{\Psi}:\underline{\mathbf{M}}\to\underline{\mathbf{N}}$ by
$\underline{\Psi}_{\mathtt{i}}\, (Q):=(\Psi_{\mathtt{i}}\, Q)$. This is well defined as $\Psi_{\mathtt{i}}$
commutes strictly with the action of $\mathbbm{1}_{\mathtt{i},n}$ and each element in $(Q)$ is obtained by
applying some $\mathbbm{1}_{\mathtt{i},n}$ to $Q$. We have to check commutativity of the  diagram
\begin{displaymath}
\xymatrix{
\underline{\Psi}_{\mathtt{j}}\circ \underline{\mathbf{M}}(\mathrm{F}_{\bullet})\ar[rr]^{\eta_{\mathrm{F}_{\bullet}}}
\ar[d]_{\mathrm{id}_{\underline{\Psi}_{\mathtt{j}}}\circ_0 \underline{\mathbf{M}}(\alpha)}&&
\underline{\mathbf{N}}(\mathrm{F}_{\bullet})\circ\underline{\Psi}_{\mathtt{i}}
\ar[d]^{\underline{\mathbf{N}}(\alpha)\circ_0 \mathrm{id}_{\underline{\Psi}_{\mathtt{i}}}}\\
\underline{\Psi}_{\mathtt{j}}\circ \underline{\mathbf{M}}(\mathrm{G}_{\bullet})\ar[rr]^{\eta_{\mathrm{G}_{\bullet}}}&&
\underline{\mathbf{N}}(\mathrm{G}_{\bullet})\circ\underline{\Psi}_{\mathtt{i}}\\
} 
\end{displaymath}
for any $\alpha:\mathrm{F}\to\mathrm{G}$ in $\cA$ (here $\eta_{\mathrm{F}_{\bullet}}$ is the class of 
$\eta_{\mathrm{F}}$ and similarly for $\eta_{\mathrm{G}_{\bullet}}$). To check commutativity of this diagram, we have 
to evaluate it at any object and it is straightforward to check commutativity there using strict commutativity of
$\Psi$ with shifts of the identity. Condition \eqref{eq3} for $\eta_{\mathrm{F}_{\bullet}}$ is automatic.
This verifies the first level of $2$-functoriality.

For a modification $\theta:\Psi\to\Phi$ in $\cA\text{-}\mathrm{pgamod}$, we define $\underline{\theta}$ by
$\underline{\theta}_{\mathtt{i},(Q)}:=\widehat{\theta_{\mathtt{i},Q}}$. We have to check \eqref{eq33}, that is
commutativity of the diagram
\begin{displaymath}
\xymatrix{ 
\underline{\Psi}_{\mathtt{j}}\circ \underline{\mathbf{M}}(\mathrm{F}_{\bullet})
\ar[rr]^{\eta_{\mathrm{F}_{\bullet}}^{\Psi}}
\ar[d]_{\underline{\theta}_{\mathtt{j}}\circ_0 \underline{\mathbf{M}}(\alpha)}&&
\underline{\mathbf{N}}(\mathrm{F}_{\bullet})\circ\underline{\Psi}_{\mathtt{i}}
\ar[d]^{\underline{\mathbf{N}}(\alpha)\circ_0 \underline{\theta}_{\mathtt{i}}}\\
\underline{\Phi}_{\mathtt{j}}\circ \underline{\mathbf{M}}(\mathrm{G}_{\bullet})
\ar[rr]^{\eta_{\mathrm{G}_{\bullet}}^{\Phi}}&&
\underline{\mathbf{N}}(\mathrm{G}_{\bullet})\circ\underline{\Phi}_{\mathtt{i}}\\
}
\end{displaymath}
which again follows by evaluating it at any object and using strict commutativity of
$\Psi$ and $\Phi$ with shifts of the identity.
\end{proof}

\subsection{Principal and cell $2$-representations of $\cA$}\label{s7.4}

For $\mathtt{i}\in \cA$, consider the principal $2$-representation $\mathbf{P}_{\mathtt{i}}^{\ccA}$ of
$\cA$.

\begin{proposition}\label{prop21}
The $2$-representations $\underline{\mathbf{P}_{\mathtt{i}}^{\ccA}}$ and 
$\mathbf{P}_{\mathtt{i}}$ of $\cC$ are equivalent.
\end{proposition}

\begin{proof}
First we note that $\mathbf{P}_{\mathtt{i}}^{\ccA}$ is pro-graded by definition.
For $\mathtt{j}\in\cC$, the orbits of $\mathbb{Z}$ on $\underline{\mathbf{P}_{\mathtt{i}}^{\ccA}}(\mathtt{j})$
coincide with the fibers of $\Omega$ on $\cC(\mathtt{i},\mathtt{j})$. The equivalence is then defined 
by mapping the fiber to its image under $\Omega$.
\end{proof}

Directly from the definitions, we have that $\underline{(\overline{\mathbf{M}})}=
\overline{(\underline{\mathbf{M}})}$ for any $2$-representation $\mathbf{M}$ of $\cA$. Consider
the $2$-representation $\overline{\mathbf{P}_{\mathtt{i}}^{\ccA}}$. By definition, each
$\overline{\mathbf{P}_{\mathtt{i}}^{\ccA}}(\mathtt{j})$ is a length category with enough projective objects.
For any $\mathtt{j}$, there is a bijection between isomorphism classes of simple objects in 
$\overline{\mathbf{P}_{\mathtt{i}}}(\mathtt{j})$ and $\mathbb{Z}$-orbits on isomorphism classes of simple objects in 
$\overline{\mathbf{P}_{\mathtt{i}}^{\ccA}}(\mathtt{j})$. 

The $2$-functor $\Omega$ induces a bijection between left, right and two-sided cells of $\cA$ and $\cC$.
Let $\mathcal{L}$ be a left cell in $\cC$ and $\mathrm{G}$ a $1$-morphism in $\cA$ such that 
$\mathrm{G}_{\bullet}$ is the Duflo involution in $\mathcal{L}$.  Setting $Q:=\mathrm{G}\, L_{\mathrm{G}}$
as in Subsection~\ref{s1.6}, we consider the $2$-representation $\mathbf{C}_{\mathcal{L}}^{\ccA}:=
(\overline{\mathbf{P}_{\mathtt{i}}^{\ccA}}(\mathtt{j}))_{Q}$. We leave it to the reader to check that this
is the cell $2$-representation of $\cA$ associated with $\Omega^{-1}(\mathcal{L})$.

\begin{proposition}\label{prop22}
The $2$-representations $\underline{\mathbf{C}_{\mathcal{L}}^{\ccA}}$ and 
$\mathbf{C}_{\mathcal{L}}$ of $\cC$ are equivalent.
\end{proposition}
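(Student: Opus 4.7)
The plan is to assemble the proposition from three ingredients already at hand: Proposition~\ref{prop21}, the equality $\underline{(\overline{\mathbf{M}})}=\overline{(\underline{\mathbf{M}})}$ noted just before, and an elementary compatibility between the subrepresentation construction $(\cdot)_Q$ and the quotient $\underline{\cdot}$.

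First I would observe that $\mathbb{P}_{\mathtt{i}}^{\ccA}$ is pro-graded (the $\mathbb{Z}$-action on $\cA(\mathtt{i},\mathtt{j})$ is free by assumption), and that this property passes to $\overline{\mathbb{P}_{\mathtt{i}}^{\ccA}}$ since the $\mathbb{Z}$-action extends component-wise to diagrams $X\to Y$. Combining Proposition~\ref{prop21} with the commutation of $\underline{\cdot}$ and abelianization yields an equivalence of $2$-representations of $\cC$:
\begin{displaymath}
\underline{\overline{\mathbb{P}_{\mathtt{i}}^{\ccA}}}\;=\;\overline{\underline{\mathbb{P}_{\mathtt{i}}^{\ccA}}}\;\simeq\;\overline{\mathbb{P}_{\mathtt{i}}}.
\end{displaymath}

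Next, I would track what happens to the ``seed'' $Q=\mathrm{G}\,L_{\mathrm{G}}$ under this equivalence. The bijection (recalled in Subsection~\ref{s7.4}) between isomorphism classes of simples in $\overline{\mathbb{P}_{\mathtt{i}}}(\mathtt{j})$ and $\mathbb{Z}$-orbits of simples in $\overline{\mathbb{P}_{\mathtt{i}}^{\ccA}}(\mathtt{j})$, together with the fact that $\Omega$ induces a bijection on cells (so the Duflo involution of $\mathcal{L}$ is exactly $\mathrm{G}_{\bullet}$), shows that the orbit of $L_{\mathrm{G}}$ corresponds to $L_{\mathrm{G}_{\bullet}}$. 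Since $\underline{\mathbf{M}}(\mathrm{G}_{\bullet})\,(L_{\mathrm{G}})=(\mathrm{G}\,L_{\mathrm{G}})=(Q)$ by construction of $\underline{\cdot}$, the orbit $(Q)$ corresponds to $\mathrm{G}_{\bullet}\,L_{\mathrm{G}_{\bullet}}$ under the above equivalence.

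The last and main step is to see that $\underline{(\,\cdot\,)_Q}\simeq (\underline{\,\cdot\,})_{(Q)}$. By definition $\mathbf{C}_{\mathcal{L}}^{\ccA}(\mathtt{j})=\mathrm{add}\{\mathrm{F}\,Q:\mathrm{F}\in\cA(\mathtt{i},\mathtt{j})\}$, and by \eqref{eq19} the $\mathbb{Z}$-action sends $\mathrm{F}\,Q$ to $\mathrm{F}_k\,Q$, which is again in this additive closure; hence $\mathbf{C}_{\mathcal{L}}^{\ccA}$ is $\mathbb{Z}$-stable and pro-graded. Its objects under $\underline{\cdot}$ are orbits $(\mathrm{F}\,Q)=\underline{\mathbf{M}}(\mathrm{F}_{\bullet})\,(Q)$, and the homomorphism spaces are, by definition of $\cC$, exactly $\mathrm{Hom}_{\overline{\ccP}_{\mathtt{i}}}(\mathrm{F}_{\bullet}(Q),\mathrm{G}_{\bullet}(Q))$ transported along the equivalence of the first paragraph. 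Thus $\underline{\mathbf{C}_{\mathcal{L}}^{\ccA}}$ identifies with the full additive subrepresentation of $\overline{\mathbb{P}_{\mathtt{i}}}$ on $\mathrm{add}\{\mathrm{F}_{\bullet}\cdot\mathrm{G}_{\bullet}\,L_{\mathrm{G}_{\bullet}}\}$, which is precisely $\mathbf{C}_{\mathcal{L}}$.

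The hard part will be making this last compatibility rigorous: one must check that the candidate equivalence assembles into a $1$-morphism in $\cC\text{-}\mathrm{amod}$ in the sense of Subsection~\ref{s1.3}, i.e., that the isomorphisms $\eta_{\mathrm{F}_{\bullet}}$ arising from the quotient satisfy \eqref{eq3}. This is however automatic from the construction of $\underline{\cdot}$ in Subsection~\ref{s7.3} (it produces honest $2$-natural transformations whenever the domain is pro-graded and the source $2$-natural transformations commute strictly with shifts of the identity, as for the quotient comparison here, so one lands in $\cA\text{-}\mathrm{pgamod}$ and may apply Proposition~\ref{prop47}).
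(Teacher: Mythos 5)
Your proposal is correct and follows essentially the same route as the paper: the paper's (two-line) proof likewise notes that $\mathbf{C}_{\mathcal{L}}^{\ccA}$ inherits pro-gradedness from $\mathbb{P}_{\mathtt{i}}^{\ccA}$ and that, as in Proposition~\ref{prop21}, the equivalence is induced by $\Omega$ -- your text just spells out the details (compatibility of $\underline{\,\cdot\,}$ with abelianization and with the construction $(\cdot)_Q$, and the matching of the seeds $Q$ and $\mathrm{G}_{\bullet}\,L_{\mathrm{G}_{\bullet}}$) that the authors leave implicit.
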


\begin{proof}
The fact that $\mathbf{C}_{\mathcal{L}}^{\ccA}$ is pro-graded follows from the definition of
$\mathbf{C}_{\mathcal{L}}^{\ccA}$ and the fact that $\mathbf{P}_{\mathtt{i}}^{\ccA}$ is pro-graded.
Similarly to Proposition~\ref{prop21}, the equivalence is induced by $\Omega$.
\end{proof}

\subsection{Graded adjunctions}\label{s7.5}

Let $\cA$ be a pro-fiat $2$-category and $\cC:=\cA/\mathbb{Z}$. Let $\mathcal{L}$ be a strongly 
regular left cell of $\cC$ and $\mathtt{i}:=\mathtt{i}_{\mathcal{L}}$. We assume that we have chosen some
representatives in $\mathbb{Z}$-orbits such that the induced grading on $\cC$ is positive. 
We also assume that $\mathbbm{1}_{\mathtt{i},\bullet}$ is represented by 
the identity $1$-morphism $\mathbbm{1}_{\mathtt{i},0}$  in $\cA(\mathtt{i},\mathtt{i})$.
Let $\mathrm{G}_{\bullet}$ be the Duflo involution for $\mathcal{L}$ and let $\mathrm{G}$ be its
chosen representative in $\cA(\mathtt{i},\mathtt{i})$. 

We have $\mathrm{Hom}_{\ccC}(\mathrm{G}_{\bullet},\mathbbm{1}_{\mathtt{i},\bullet})\neq 0$ by
\cite[Proposition~17]{MM} and hence it makes sense to define
$\mathbf{a}$ as the smallest integer such that 
\begin{displaymath}
\mathrm{Hom}^{\mathbf{a}}_{\ccC}(\mathrm{G}_{\bullet},\mathbbm{1}_{\mathtt{i},\bullet})=
\mathrm{Hom}_{\ccA}(\mathrm{G}_{-\mathbf{a}},\mathbbm{1}_{\mathtt{i},0})\neq 0. 
\end{displaymath}
This should be thought of as an analogue of Lusztig's $\mathbf{a}$-function.

Consider the cell $2$-representation $\mathbf{C}_{\mathcal{L}}$ of $\cC$. By Proposition~\ref{prop22},
we have a positive grading on $\mathbf{C}_{\mathcal{L}}(\mathtt{i})$. Denote by $\mathbf{l}$ the maximal
$i\in\mathbb{Z}$ such that $\mathrm{End}^i(P_{\mathrm{G}_{\bullet}})\neq 0$.

\begin{lemma}\label{lem32}
We have $\mathrm{G}^*\cong \mathrm{G}_{\mathbf{l}-2\mathbf{a}}$.
\end{lemma}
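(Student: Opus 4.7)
The plan is to analyze $\mathrm{G}_\bullet$ and $\mathrm{G}^*_\bullet$ as tensor functors with specific graded bimodules on the cell representation, and then use graded Frobenius/Nakayama duality on the underlying weakly symmetric algebra to pin down the biadjoint at the bimodule level, thereby reading off the desired shift. Concretely, let $A$ be a basic positively graded weakly symmetric finite-dimensional algebra with $\mathbf{C}_{\mathcal{L}}(\mathtt{i}) \simeq A\text{-}\mathrm{gmod}$ (the weak symmetry is standard in the fiat/strongly regular setup), and let $e$ be the primitive idempotent corresponding to $L_{\mathrm{G}_\bullet}$, normalized to sit in degree $0$. By the graded refinement of Lemma~\ref{lem5} combined with the $\mathcal{J}$-$2$-fullness from Theorem~\ref{thm7}, the functor $\overline{\mathbf{C}}_{\mathcal{L}}(\mathrm{G}_\bullet)$ is isomorphic to tensoring with a shifted graded bimodule $B\langle s\rangle$, where $B = Ae\otimes_{\Bbbk} eA$; and the direct matching $\mathrm{Hom}^{i}_{\ccC}(\mathrm{G}_\bullet,\mathbbm{1}_{\mathtt{i},\bullet}) \cong (eAe)_{i-s}$ combined with the minimality defining $\mathbf{a}$ forces $s=\mathbf{a}$. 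Similarly, $\overline{\mathbf{C}}_{\mathcal{L}}(\mathrm{G}^*_\bullet)$ is tensoring with $B\langle s'\rangle$ for some $s'$, and $\mathrm{G}^* = \mathrm{G}_n$ translates to $s' = \mathbf{a} + n$. Furthermore, $\mathrm{End}^*(P_{\mathrm{G}_\bullet}) \cong eAe$ as graded algebras, so $\mathbf{l}$ is the top degree of $eAe$, which for basic weakly symmetric $A$ coincides with the socle degree of $A$.

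The key step is graded Frobenius/Nakayama duality: for weakly symmetric $A$ with socle concentrated in degree $\mathbf{l}$, one has $(eA)^\vee \cong Ae\langle \mathbf{l}\rangle$ as graded left $A$-modules, whence $\mathrm{Hom}_A(B,A) \cong B\langle \mathbf{l}\rangle$ as graded bimodules. It follows that the right adjoint of ${-}\otimes_A B\langle s\rangle$ is ${-}\otimes_A B\langle \mathbf{l}-s\rangle$. Since $\cA$ is pro-fiat, $\mathrm{G}^*_\bullet$ is biadjoint to $\mathrm{G}_\bullet$ in $\cC$, so $s' = \mathbf{l} - \mathbf{a}$ and therefore $n = s' - s = \mathbf{l} - 2\mathbf{a}$. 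Lifting the resulting isomorphism $\mathrm{G}^*_\bullet \cong \mathrm{G}_\bullet\langle n\rangle$ in $\cC$ back to $\cA$ (the chosen representatives being already fixed) yields $\mathrm{G}^* \cong \mathrm{G}_{\mathbf{l}-2\mathbf{a}}$, as required. The main obstacle will be to make the graded Frobenius identification $\mathrm{Hom}_A(Ae,A) \cong eA\langle \mathbf{l}\rangle$ fully rigorous: one must verify that the graded Nakayama form on $A$ pairs the socle (in degree $\mathbf{l}$) with the top (in degree $0$) so that the shift is exactly $\mathbf{l}$, and that the Nakayama permutation is trivial on the basic weakly symmetric $A$ so that $e$ indeed pairs with itself.
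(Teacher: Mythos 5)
Your route is genuinely different from the paper's and could be made to work, but as written it has a circularity problem at its central step. You take as input that $A$ is weakly symmetric (equivalently, that $Ae\cong(eA)^\vee$ up to shift, i.e.\ that $P_{\mathrm{G}}$ is graded injective with socle isomorphic to its top), calling this ``standard in the fiat/strongly regular setup''. At this point of the paper it is not available: weak symmetry of the underlying algebra of a cell $2$-representation is part of \cite[Theorem~43]{MM} and is proved there under the numerical hypothesis of Theorem~\ref{thm15}\eqref{thm15.2} --- which is exactly what Lemma~\ref{lem32} is a step towards establishing (via Theorem~\ref{thm33}) in the positively graded case. The same objection applies to your appeal to Theorem~\ref{thm7}: the standing assumption of Section~\ref{s3} includes that numerical condition. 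What is available without it is much weaker: by \cite[Corollary~38(a)]{MM} and strong regularity, $P_{\mathrm{G}}=\mathrm{G}\,L_{\mathrm{G}}$ is an indecomposable projective with \emph{simple socle} $L_{\mathrm{G}}$, sitting in the top degree $\mathbf{l}$ of $\mathrm{End}(P_{\mathrm{G}})\cong eAe$; a simple socle does not give injectivity of $Ae$, so the graded Nakayama identification $\mathrm{Hom}_A(Ae,A)\cong eA\langle\mathbf{l}\rangle$ --- which you yourself flag as the main thing still to be verified --- is precisely the missing content, not a routine check.

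The gap is repairable in two ways. One can deduce $(eA)^\vee\cong Ae\langle\mathbf{l}\rangle$ from data you already have: both $\mathrm{G}$ and its biadjoint $\mathrm{G}^*\cong\mathrm{G}_x$ act on the cell $2$-representation as $Ae\otimes_{\Bbbk}eA\otimes_A{-}$ up to shift (Lemma~\ref{lem5}), while the abstract right adjoint of $Ae\otimes_{\Bbbk}eA\otimes_A{-}$ is $(eA)^\vee\otimes_{\Bbbk}eA\otimes_A{-}$; comparing the two forces $(eA)^\vee\cong Ae\langle c\rangle$ for some $c$, and the simple-socle-in-degree-$\mathbf{l}$ fact pins down $c=\mathbf{l}$, after which your computation $x=\mathbf{l}-2\mathbf{a}$ goes through. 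The paper instead avoids bimodules altogether: it applies the adjunction $(\mathrm{G},\mathrm{G}_x)$ to $\mathrm{Hom}(\mathrm{G}\,\Delta,L_{\mathrm{G}})$, where $\Delta$ is the quotient of $0\to\mathbbm{1}_{\mathtt{i},0}$ with simple socle $L_{\mathrm{G}_{-\mathbf{a}}}$, and reads off $x$ from the degree in which the simple socle of $\mathrm{G}_{x+\mathbf{a}}\,L_{\mathrm{G}_{-\mathbf{a}}}\cong P_{\mathrm{G}}$ sits; this uses only the simple-socle statement and requires neither injectivity of $P_{\mathrm{G}}$ nor any $2$-fullness.
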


\begin{proof}
As $\mathrm{G}_{\bullet}^*\cong \mathrm{G}_{\bullet}$, we have $\mathrm{G}^*\cong \mathrm{G}_x$ for some
$x\in \mathbb{Z}$. As in \cite[Subsection~4.7]{MM}, we denote by $\Delta$ the unique quotient of 
$0\to \mathbbm{1}_{\mathtt{i},0}$ which has simple socle $L_{\mathrm{G}_{-\mathbf{a}}}$. We compute:
\begin{displaymath}
\begin{array}{rcl}
0&\neq & \mathrm{Hom}(\mathrm{G}\,\mathbbm{1}_{\mathtt{i},0},L_{\mathrm{G}})\\
&\subset& \mathrm{Hom}(\mathrm{G}\,\Delta,L_{\mathrm{G}})\\
&=& \mathrm{Hom}(\mathrm{G}\,L_{\mathrm{G}_{-\mathbf{a}}},L_{\mathrm{G}})\\
&=& \mathrm{Hom}(L_{\mathrm{G}_{-\mathbf{a}}},\mathrm{G}_x\,L_{\mathrm{G}})\\
&=& \mathrm{Hom}(L_{\mathrm{G}_{-\mathbf{a}}},\mathrm{G}_{x+\mathbf{a}}\,L_{\mathrm{G}_{-\mathbf{a}}}).
\end{array}
\end{displaymath}
Here the third line follows from the fact that $\mathrm{G}$ annihilates all subquotients of $\Delta$ apart from
$L_{\mathrm{G}_{-\mathbf{a}}}$ (see \cite[Proposition~17]{MM}), and the fourth line uses adjunction.
The module $\mathrm{G}_{x+\mathbf{a}}\,L_{\mathrm{G}_{-\mathbf{a}}}$ has simple socle
$L_{\mathrm{G}_{x+\mathbf{a}-\mathbf{l}}}$. Therefore, the inequality
$\mathrm{Hom}(L_{\mathrm{G}_{-\mathbf{a}}},\mathrm{G}_{x+\mathbf{a}}\,L_{\mathrm{G}_{-\mathbf{a}}})\neq 0$
means that $-\mathbf{a}=x+\mathbf{a}-\mathbf{l}$, that is $x=\mathbf{l}-2\mathbf{a}$.
\end{proof}

\section{Corrigendum to the proof of Theorem \ref{thm7}}\label{corrig}

As stated above, Theorem \ref{thm7} is wrong in general. A counterexample is given by $\cC=\mathrm{Rep}(C_2)$ over a field $\Bbbk$ of characteristic $2$, where $\mathrm{Rep}(C_2)$ denotes the category of representations of the cyclic group of order $2$ with horizontal composition given by the tensor product. This is a $2$-category with one object $\bullet$, two indecomposable $1$-morphisms: the simple module $S$ which acts as the identity $1$-morphism and the projective module $P$. There are two $\mathcal{J}$-cells $\{S\}$ and $\mathcal{J}=\{P\}$. The $2$-category is $\mathcal{J}$-simple and the cell $2$-representation $\mathbf{C}_{\mathcal{J}}$ has underlying algebra isomorphic to $A=\Bbbk[x]/(x^2)$. The functor $\mathbf{C}_{\mathcal{J}}(P)$ if given by tensoring with $A\otimes A$. The endomorphism algebra of $\mathrm{G}=P$ is isomorphic to $A$ and the representation map $A\to A\otimes_\Bbbk A$ is given by $x\mapsto x\otimes 1+1\otimes x$, which is an algebra morphism in characteristic $2$. This map is clearly not surjective. 

The problem here stems from the fact that the composition of the (unique up to scalar) morphisms $S\to P\to S$ is zero, which under the representation map translates to the composition $A\to A\otimes_\Bbbk A \to A$, where the first map is the one above and the second is given by multiplication, being zero.

In order to prove Theorem \ref{thm7} under the additional assumption that the composition 
$\mathbbm{1}_{\mathtt{i}} \to \mathrm{G} \to \mathbbm{1}_{\mathtt{i}} $ of the map defining the Duflo involution with its mate under the adjunction isomorphism
\[ \mathrm{Hom}_{\ccC}(\mathrm{G},\mathbbm{1}_{\mathtt{i}})\cong  \mathrm{Hom}_{\ccC}(\mathbbm{1}_{\mathtt{i}},\mathrm{G})\] 
is nonzero, we first need a general result about finite-dimensional algebras.

\subsection{Subalgebras of self-injective algebras}

Let $\Bbbk$ be an algebraically closed  
field and $A$ a finite dimensional, local
and self-injective associative $\Bbbk$-algebra.
Let $R$ be the radical of $A$. Let
$s$ be a fixed non-zero element in the 
(one-dimensional) socle of $A$.

Consider the enveloping algebra $U:=A\otimes_\Bbbk A^{\mathrm{op}}$.
Note that $U$ is also self-injective and local.

\begin{proposition}\label{mainprop}
Let $Q$ be a subalgebra of $U$ satisfying the following 
conditions.
\begin{enumerate}[$($a$)$]
\item\label{mainprop.1} $Q$ is self-injective.
\item\label{mainprop.2} $Q$ contains both $1\otimes s$
and $s\otimes 1$.
\item\label{mainprop.3} For any $x\in A$, there is 
$u_x\in A\otimes_\Bbbk R$ such that $Q$ contains
$(x\otimes 1) + u_x$.
\item\label{mainprop.4} For any $x\in A$, there is 
$v_x\in R\otimes_\Bbbk A$ such that $Q$ contains
$(1\otimes x) + v_x$.
\end{enumerate}
Then $Q=U$. 
\end{proposition}

\begin{proof}
As $Q$, by Assumption~\eqref{mainprop.2}, contains  both $1\otimes s$
and $s\otimes 1$, it contains their product $s\otimes s$
which is a generator of the simple socle of $U$.
In particular, $Q$ contains the socle of $U$ which then
is a part of the socle of $Q$. Therefore the socle of
$Q$ must coincide with the socle of $U$ due to our Assumption~\eqref{mainprop.1}
that $Q$ is self-injective. 
Our strategy of the proof will
be to seek the following contradiction: assume $Q\neq U$ 
and show that in this case $Q$ has an additional socle component.

Choose $a_1,\dots,a_n$ in $R$
which descend to a basis of $R/R^2$ modulo $R^2$. Then
$a_1,\dots,a_n$ generate $A$. Moreover, 
\begin{displaymath}
a_1\otimes 1,\dots,a_n\otimes 1,1\otimes a_1,\dots,1\otimes a_n 
\end{displaymath}
generate $U$.
By Assumptions~\eqref{mainprop.3} and \eqref{mainprop.4},
we know that $Q$ contains $(a_i\otimes 1) + u_{a_i}$
and $(1\otimes a_i) + v_{a_i}$, for $i=1,2,\dots,n$.
Note that that, in principle, some 
$(a_i\otimes 1) + u_{a_i}$ can coincide with some 
$(1\otimes a_j) + v_{a_j}$.

As $A$ is self-injective and local, we know that 
the socle $\mathrm{soc}(A)$ of $A$ is generated by $s$.
Let $b_1,\dots ,b_n$ be elements of $\mathrm{soc}^2(A)$
which descend to a basis of $\mathrm{soc}^2(A)/\mathrm{soc}(A)$
modulo $\mathrm{soc}(A)$. Note that $n$ is the same as in the previous
paragraph which is justified by the fact that $A$ is self-injective.
Then the elements
\begin{displaymath}
b_1\otimes s,\dots b_n\otimes s,
s\otimes b_1,\dots, s\otimes b_n
\end{displaymath}
belong to $\mathrm{soc}^2(U)$ and descend to a basis of 
$\mathrm{soc}^2(U)/\mathrm{soc}(U)$
modulo $\mathrm{soc}(U)$.

For $i\in\{1,\dots,n\}$, let $w_i$ be an element of the
free algebra $\mathcal{F}$ with generators $x_1,\dots,x_n$
which descends to $b_i$ under the canonical projection
$\mathcal{F}\tto A, x_j \mapsto a_j$. Now let 
$\phi\colon \mathcal{F} \to U$ be the map defined by $\phi(x_j) = (a_j\otimes 1)+u_{a_j}$ and let $\overline{w}_i = \phi(w_i)$, which is in $Q$. Since $u_{a_j}\in A\otimes R$ by assumption, we have $(1\otimes s)u_{a_j}=0$. 
Hence $(1\otimes s)\overline{w}_i= b_i\otimes s \in Q$. Similarly, we obtain $s\otimes b_i \in Q$.
Consequently, $Q$ contains $\mathrm{soc}^2(U)$.
In particular, the space $\mathrm{soc}^2(U)/\mathrm{soc}(U)$
is a subquotient of $Q$ and has dimension $2n$.


Let us now assume that $Q\neq U$. Then the dimension of $M:=\mathrm{rad}(Q)/ (Q\cap \mathrm{rad}^2(U))$ is
strictly less that $2n$ as $\mathrm{rad}(U)/  \mathrm{rad}^2(U)$ has dimension $2n$
and generates $U$. We denote  the dimension of $M$ by $l$
and let $m_1,\dots,m_l$ be elements 
in $\mathrm{rad}(Q)$ which descend to a basis of $M$
under the canonical projection. For each $i\in\{1,\dots,l\}$, multiplication by $m_i$ defines a linear map from
the $2n$-dimensional space $\mathrm{soc}^2(U)/\mathrm{soc}(U)$
to the $1$-dimensional space $\mathrm{soc}(U)$
and this linear map has kernel of dimension at least $2n-1$.

Since the number of $m_i$ is strictly smaller that $2n$, we necessarily will
find a non-zero element of $\mathrm{soc}^2(U)/\mathrm{soc}(U)$
which is mapped to $0$ by all of them, and hence by the radical of $Q$. This means 
exactly that $\mathrm{soc}^2(U)$ contains at least 
two linearly independent elements in the socle of $Q$.
This is our contradiction.
\end{proof}

\subsection{New version of Theorem \ref{thm7}}

Now we place ourselves into the context of Theorem \ref{thm7}. That is, we let $\cC= \cC^{(\mathcal{J})}$ be a fiat $2$-category such that $\mathcal{J}$ is strongly regular. We let $\mathcal{L}$ be a left cell in $\mathcal{J}$ and $\mathrm{G}$ its Duflo involution with source and target $\mathtt{i}$.

We first note that we can consider the $\mathcal{H}$-cell reduction $\cC_{\mathcal{H}}$ of $\cC$, which is the $2$-category with one object $\mathtt{i}$ and morphism category $\cC_{\mathcal{H}}(\mathtt{i},\mathtt{i})$ given by the additive closure of $\mathbbm{1}_{\mathtt{i}}$ and $\mathrm{G}$ in $\cC(\mathtt{i},\mathtt{i})$. 

\begin{lemma}
The cell $2$-representation $\mathbf{C}_{\mathcal{L}}$ of $\cC$ is $\mathcal{J}$-$2$-full if and only if the cell $2$-representation $\mathbf{C}_{\mathcal{H}}$ of $\cC_{\mathcal{H}}$ is $\mathcal{H}$-$2$-full.
\end{lemma}

\begin{proof}
This follows directly from Propositions~\ref{prop4} and \ref{prop6}.
\end{proof}

It thus suffices to prove the theorem in the special case
of the category $\cC_{\mathcal{H}}$. In other words,
we assume that $\cC=\cC_{\mathcal{H}}$ is a fiat $2$-category 
(with one object) and two $1$-morphisms
$\mathrm{G}$ and $\mathbbm{1}$, where $\mathcal{J}=\{\mathrm{G}\}$ is a
two-sided cell (hence also a left cell, which we denote $\mathcal{L}$). 
We assume $\cC$ is $\mathcal{J}$-simple.
Consider the cell $2$-representation $\mathbf{C}_{\mathcal{L}}$
and its abelianization $\overline{\mathbf{C}}_{\mathcal{L}}$.
We denote the (unique, up to isomorphism) indecomposable projective
in the underlying category of this $2$-representation by $P_{\mathrm{G}}$
and its the simple top by $L_\mathrm{G}$. We denote by $A$
the endomorphism algebra of $P_\mathrm{G}$ and by $R$ the radical of $A$.
Note that $A$ is local (as $P_\mathrm{G}$ is indecomposable).
Additionally, from \cite[Theorem~2]{KMMZ} 
(and the dual dual statement for injective modules)
it follows that $A$ is  self-injective.

Then the  representation map $\mathbf{C}_{\mathcal{L}}$
maps ${\cC}$ to the bicategory $A$-mod-$A$ of 
all finite dimensional $A$-$A$-bimodules. The image of
$\mathbbm{1}$ under $\mathbf{C}_{\mathcal{L}}$ is 
isomorphic to the regular $A$-$A$-bimodule $A$, while
the image of  $\mathrm{G}$ under $\mathbf{C}_{\mathcal{L}}$ is 
isomorphic to the indecomposable projective 
$A$-$A$-bimodule $A\otimes_\Bbbk A$.
As $\mathrm{G}$ is not annihilated by $\mathbf{C}_{\mathcal{L}}$
and ${\cC}$ is $\mathcal{J}$-simple, the 
representation map $\mathbf{C}_{\mathcal{L}}$ from ${\cC}$
to $A$-mod-$A$ is injective at the level of $2$-morphisms.

Recall that ${\cC}_A$ denote the sub-bicategory of 
$A$-mod-$A$ given by the additive closure of $A$
and $A\otimes_\Bbbk A$. By the above, the 
representation map $\mathbf{C}_{\mathcal{L}}$ maps 
${\cC}$ to ${\cC}_A$.

\begin{theorem}\label{thmmain}
\begin{enumerate}[$($a$)$]
\item\label{thmmain.1}
The composition $\alpha\circ\overline{\alpha}$ is non-zero
if and only if $\mathrm{char}(\Bbbk)$ does not divide 
the dimension of $A$. 
\item\label{thmmain.2}
If the latter condition is satisfied,
then  the representation map $\mathbf{C}_{\mathcal{L}}$ from
${\cC}$ to ${\cC}_A$ is a biequivalence.
\end{enumerate}
\end{theorem}
 
\begin{proof}
We need to prove surjectivity of  
the representation map $\mathbf{C}_{\mathcal{L}}$ from
${\cC}$ to ${\cC}_A$ at the level of 
two morphisms. Similarly to 
Propositions~\ref{prop4} and \ref{prop6},
by adjunction, this reduces to the surjectivity
of 
\begin{equation}\label{eq-ad1}
{\cC}(\mathrm{G},\mathbbm{1})\to
\mathrm{Hom}_{A\text{-}A}(A\otimes_\Bbbk A,A).
\end{equation}
Up to a radical automorphism, $\mathbf{C}_{\mathcal{L}}$ maps the
$\alpha$ to the surjective multiplication morphism 
$\mathbf{m}:A\otimes_\Bbbk A\to A$. Dually,
up to a radical automorphism $\overline{\alpha}$ is 
mapped to the injective morphism $\mathbf{n}:A\to A\otimes_\Bbbk A$
given by the usual comultiplication on the Frobenius algebra $A$.
Hence the composition $\alpha\circ \overline{\alpha}$ 
is mapped to the endomorphism of $A$ given by multiplication 
with $\dim(A)s$, where $s$ is some fixed generator of the 
simple socle of $A$. In particular, this map is
non-zero if and only if $\dim(A)$ is not divisible by 
$\mathrm{char}(\Bbbk)$.

The $2$-morphism $(\alpha\circ \overline{\alpha})\circ_h \mathrm{id}_F$ thus gets
mapped to the endomorphism of $A\otimes_\Bbbk A$
sending $1\otimes 1$ to $s\otimes 1$. Similarly,
the $2$-morphism $\mathrm{id}_F\circ_h (\alpha\circ \overline{\alpha})$ thus gets
mapped to the endomorphism of $A\otimes_\Bbbk A$
sending $1\otimes 1$ to $1\otimes s$.

Denote by $Q$ the algebra ${\cC}(\mathrm{G},\mathrm{G})$. 
The representation map $\mathbf{C}_{\mathcal{L}}$
is an injective algebra morphism from $Q$
to $A\otimes_\Bbbk A^{\mathrm{op}}$, so we identify
$Q$ with its image in $A\otimes_\Bbbk A^{\mathrm{op}}$.

Combining the left and the right actions of ${\cC}$
on $\mathrm{add}(\mathrm{G})$ with the assumption that 
${\cC}$ is $\mathcal{J}$-simple, we can view this 
action as a cell $2$-representation of the fiat $2$-category
${\cC}\boxtimes_\Bbbk{\cC}^{\mathrm{co},\mathrm{op}}$.
In particular, $Q$ is the endomorphism algebra of a projective
object of this $2$-representation and hence is self-injective
(by \cite[Theorem~2]{KMMZ} and its dual version).

Evaluation at $L_\mathrm{G}$ defines a surjective algebra morphism from
$Q$ to $A$. Consequently, for any $x\in A$, there is  
$u_x\in A\otimes_\Bbbk R$ such that $Q$ contains
$(x\otimes 1) + u_x$. A similar argument for the right 
cell $2$-representation of ${\cC}$ implies that 
for any $x\in A$, there is  
$v_x\in A\otimes_\Bbbk R$ such that $Q$ contains
$(1\otimes x) + v_x$.

Now we see that all assumptions of Proposition~\ref{mainprop}
are satisfied. So, from Proposition~\ref{mainprop},
we have $Q=A\otimes_\Bbbk A^{\mathrm{op}}$.
Now, pre-composing the surjective map $\mathbf{m}$
with all possible endomorphism of $A\otimes_\Bbbk A^{\mathrm{op}}$
we obtain all possible $A$-$A$-homomorphisms 
from $A\otimes_\Bbbk A$ to $A$. This gives
the surjectivity of \eqref{eq-ad1} and completes the proof.
\end{proof}

\vspace{1cm}

\noindent
Volodymyr Mazorchuk, Department of Mathematics, Uppsala University,
Box 480, 751 06, Uppsala, SWEDEN, {\tt mazor\symbol{64}math.uu.se};
http://www.math.uu.se/$\tilde{\hspace{1mm}}$mazor/.
\vspace{0.1cm}

\noindent
Vanessa Miemietz, School of Mathematics, University of East Anglia,\\
Norwich NR4 7TJ, UK, \\ {\tt v.miemietz\symbol{64}uea.ac.uk};
http://www.uea.ac.uk/$\tilde{\hspace{1mm}}$byr09xgu/.

\end{document}